\documentclass[a4paper]{amsart}
\usepackage{amssymb}
\usepackage{amsthm}
\usepackage{amsmath,amscd}
\usepackage[mathscr]{euscript}
\usepackage[all]{xy}
\usepackage{color}
\usepackage[dvipsnames]{xcolor}
\usepackage[utf8]{inputenc}
\normalfont
\usepackage[T1]{fontenc}
\usepackage[textwidth=14cm,hcentering]{geometry}
\usepackage[colorlinks=true,linkcolor=black,citecolor=blue]{hyperref}
\setcounter{tocdepth}{2}

\usepackage{enumerate}
\usepackage{stackrel}
\usepackage{comment}
\usepackage{hyperref}

\usepackage{mdwlist}
\usepackage{color}
\usepackage[dvipsnames]{xcolor}
\usepackage{array} 
\input xy
\xyoption{all}

\usepackage{tikz}
\usepackage{tikz-cd}
\usetikzlibrary{arrows,calc,matrix,topaths,positioning,scopes,shapes,decorations}

\newtheorem{prop}{Proposition}[section]
\newtheorem{teo}[prop]{Theorem}
\newtheorem{lem}[prop]{Lemma}
\newtheorem{cor}[prop]{Corollary}

\theoremstyle{definition}
\newtheorem{defi}[prop]{Definition}
\newtheorem{example}[prop]{Example}
\newtheorem{examples}[prop]{Examples}
\newtheorem{rmk}[prop]{Remark}
\newtheorem{notation}[prop]{Notation}




\newcommand{\DD}{\mathbb{D}}

\newcommand{\ZZ}{\mathbb{Z}}


\newcommand{\Bb}{\mathcal{B}}
\newcommand{\Cc}{\mathcal{C}}

\newcommand{\Ee}{\mathcal{E}}
\newcommand{\Ff}{\mathcal{F}}

\newcommand{\Mm}{\mathcal{M}}

\newcommand{\Ww}{\mathcal{W}}

\newcommand{\Zz}{\mathcal{Z}}
\newcommand{\Zzw}{\mathcal{ZW}}
\newcommand{\Bbw}{\mathcal{BW}}

\newcommand{\Ho}{\mathrm{Ho}}
\newcommand{\Tot}{\mathrm{Tot}}
\newcommand{\Hom}{\mathrm{Hom}}

\newcommand{\Ker}{\mathrm{Ker}}
\newcommand{\Coker}{\mathrm{Coker}}
\newcommand{\Img}{\mathrm{Im}}

\newcommand{\Dec}{\mathrm{Dec}}
\newcommand{\kk}{R}

\newcommand{\bgd}[2]{^{{#1},{#2}}}


\newcommand{\cpx}{\mathrm{C}_\kk} 			
\newcommand{\bcpx}{\mathrm{bC}_\kk} 			
\newcommand{\rbc}{r\text{-}\mathrm{bC}_\kk}            
\newcommand{\bimod}{\mathrm{bgMod}_\kk} 		
\newcommand{\fcpx}{\mathbf{F}\mathrm{C}_\kk} 			


\newcommand{\simr}[1]{\begin{array}{c}\vspace{-.3cm} \simeq\\ \vspace{-.4cm}\text{\tiny{$#1$}}\vspace{.36cm} \end{array}}

\newcommand{\pb}{\ar@{}[dr]|{\mbox{\LARGE{$\lrcorner$}}}}
\newcommand{\lra}{\longrightarrow}

\newcommand{\cl}[2]{{#1}\mathrm{\hbox{-}{#2}}}



\newcommand{\hmat}[2]{\tiny{\begin{pmatrix} #1&#2\end{pmatrix}}}
\newcommand{\vmat}[2]{\tiny{\begin{pmatrix} #1\\ #2\end{pmatrix}}}
\newcommand{\idmat}{\tiny{\begin{matrix} 1\end{matrix}}}


\newcommand{\cylr}{\mathrm{Cyl}_r}

\newcommand{\cyl}[1]{\mathrm{Cyl}_{#1}}


\title{Model category structures and  spectral sequences}

\author{Joana Cirici}
\address[J. Cirici]{
Departament de Matem\`{a}tiques i Inform\`{a}tica\\ 
Universitat de Barcelona\\
Gran Via 585\\
08008 Barcelona}
\email{jcirici@ub.edu}

\author{Daniela Egas Santander}
\address[D. Egas Santander]
{Ecole polytechnique f\'{e}d\'{e}rale de Lausanne\\
SV BMI UPHESS\\ 
MA B3 425 (B\^{a}timent MA) \\
Station 8\\ 
CH-1015 Lausanne\\
Switzerland}
\email{daniela.egassantander@epfl.ch}

\author{Muriel Livernet}
\address[M. Livernet]{
Univ Paris Diderot, Institut de Math\'ematiques de Jussieu-Paris Rive Gauche, CNRS, Sorbonne Universit\'e, 8 place Aur\'elie Nemours, F-75013, Paris, France}
\email{livernet@math.univ-paris-diderot.fr}

\author{Sarah Whitehouse}
\address[S. Whitehouse]{
School of Mathematics and Statistics\\ 
University of Sheffield\\ S3 7RH\\ England}
\email{s.whitehouse@sheffield.ac.uk }

\thanks{J. Cirici would like to acknowledge financial support from the DFG (SPP-1786), AGAUR (Beatriu de Pin\'{o}s Program) and partial support from the AEI/FEDER, UE (MTM2016-76453-C2-2-P). 
D. Egas Santander would like to thank the Berlin Mathematical School and CRC 647 for partial financial support.}

\subjclass[2010]{
18G55, 
18G40} 

\keywords{filtered complex, bicomplex, spectral sequence, model category}

\begin{document}

\begin{abstract}Let $\kk$ be a commutative ring with unit. We
endow the categories of filtered complexes and of bicomplexes of $\kk$-modules,
with cofibrantly generated model structures, where the class of weak equivalences is given by those morphisms inducing
a quasi-isomorphism at a certain fixed stage of the associated spectral sequence.
For filtered complexes, we  relate the different model structures obtained, 
when we vary the stage of the spectral sequence, using the functors shift and d\'{e}calage.
\end{abstract}

\maketitle

\setcounter{tocdepth}{1}
\tableofcontents

\section{Introduction}

Spectral sequences are important algebraic structures providing a means of computing homology
groups by a process of successive approximations. They express
intricate relationships among homotopy, homology, or cohomology groups
arising from diverse situations.
Since the introduction of spectral sequences by Leray in the nineteen-fifties, 
they have become essential in many branches of mathematics: spectral sequences are
widely recognized as being fundamental and powerful computational tools in algebraic topology, algebraic geometry and homological algebra, 
at the same time as being useful techniques in analysis and mathematical physics 
(see~\cite{McC} for 
 examples in different contexts).
 \medskip
 
Two main algebraic sources for functorial spectral sequences are the categories of
filtered complexes and of bicomplexes (also called double complexes). 
Given an object $A$ in either of these two categories, its associated spectral sequence is a collection of
$r$-bigraded complexes $\{E_r(A),\delta_r\}_{r\geq 0}$ with the property that $E_{r+1}(A)\cong H(E_r(A),\delta_r)$.
Functoriality ensures that every morphism $f:A\to B$ will induce a morphism of $r$-bigraded complexes $E_r(f):E_r(A)\to E_r(B)$ 
at each stage of the associated spectral sequence. 
For every $r\geq 0$ one may consider the class of morphisms $f$
such that the induced map $E_r(f)$ at the $r$-stage, is a quasi-isomorphism of $r$-bigraded complexes. 
This gives a class of weak equivalences $\Ee_r$ which is closed under composition,
contains all isomorphisms and satisfies the two-out-of-three property. 
Elements of $\Ee_r$ are called \textit{$E_r$-quasi-isomorphisms}.
We have a chain of inclusions 
\[\Ee_0\subseteq \Ee_1\subseteq \cdots \subseteq\Ee_r\subseteq\Ee_{r+1}\subseteq\cdots .\]
Given a category $\Cc$ with a class of weak equivalences $\Ee$, a central problem in homotopical algebra is to 
study the passage to the \textit{homotopy category}: this is the localized category $\Ho(\Cc)=\Cc[\Ee^{-1}]$
obtained by making morphisms in $\Ee$ into isomorphisms.
Originally arising in the category of topological spaces, this is a problem
of a very general nature, and central in many problems of algebraic geometry and topology.
The classical approach to this problem is nowadays provided by Quillen's model categories. The verification of
a set of axioms satisfied by three distinguished classes of morphisms (weak
equivalences, fibrations and cofibrations) gives a reasonably general context
to study the homotopy category. A particular type of model category is a cofibrantly generated one. 
In this case, cofibrations and trivial cofibrations are generated by sets of morphisms $I$ and $J$
and such categories enjoy particularly useful recognition theorems. They have good properties with
respect to transfer of model structures along adjunctions.
Important examples of cofibrantly generated model categories are model structures on
the categories of topological spaces, of simplicial sets and of chain complexes of $\kk$-modules  (see \cite{Hovey} and \cite{Hir} for details).
\medskip

Let $\Cc$ be either the category of filtered complexes or the category of bicomplexes of $\kk$-modules, where $\kk$ is a
commutative ring with unit.
By taking the class $\Ee_r$ of $E_r$-quasi-isomorphisms, in this paper we study the \textit{$r$-homotopy category}
defined by inverting $E_r$-quasi-isomorphisms.
There is a sequence of localization functors
\[\mathrm{Ho}_0(\Cc)\to \mathrm{Ho}_1(\Cc)\to \mathrm{Ho}_2(\Cc)\to\cdots.\]
We define sets $I_r$ and $J_r$ of generating cofibrations and trivial
cofibrations and build cofibrantly generated model structures where the class of weak equivalences is given by $E_r$-quasi-isomorphisms.
\medskip

The problem of studying the homotopy categories $\Ho_r(\Cc)$ is not only of interest in the context of abstract homotopical algebra. Indeed, it relates to several homological and homotopical invariants of geometric and topological origin
which highlight the interest of
studying more flexible structures than the one provided by the initial stage $\Ee_0$.
We mention a few examples.
In the mixed Hodge theory of Deligne \cite{DeHII}, there are two filtrations
associated to the complex of singular cochains of every complex algebraic variety: the Hodge filtration and the weight filtration. These filtrations are not well-defined but become proper invariants only up to $E_0$-quasi-isomorphism (for the Hodge filtration) and $E_1$-quasi-isomorphism (for the weight filtration).
A second example is in the context of Sullivan's rational homotopy theory: Halperin and Tanr\'{e} \cite{HT} developed a theory of minimal models of filtered differential graded algebras and defined a filtered homotopy type with respect to $E_r$-quasi-isomorphisms.
Their theory has proven to be a useful tool in the rational homotopy theory of complex manifolds, via the  Fr\"{o}licher spectral sequence 
and the Borel spectral sequence of a principal holomorphic bundle (see  \cite{FOT}).
Although they prove some lifting axioms for their minimal objects, the theory of Halperin and Tanr\'{e} lacks an underlying model structure.
Another example lies at the intersection of Deligne's mixed Hodge theory and Sullivan's rational homotopy:
the rational homotopy type of a complex algebraic variety is entirely determined by the first stage of the multiplicative weight spectral sequence (see~\cite{Mo}, \cite{CG1}). Again, this is an invariant defined in the homotopy category of filtered algebras up to $E_1$-quasi-isomorphism.
\medskip

The homotopy theory of filtered complexes has been classically studied by considering as weak equivalences the class of morphisms
of filtered complexes such that the restriction at each step of the filtration is a quasi-isomorphism.
Note that this class of equivalences is contained in $\Ee_0$, and for bounded filtrations, the two classes agree (see Proposition $\ref{WrEr}$).
The first steps were done by Illusie (see Chapter V of \cite{Illusie}), who developed a theory of filtered injective resolutions for 
bounded below cochain complexes of filtered objects in an abelian category.
An alternative approach in the context of exact categories was developed by Laumon \cite{Lau}.
More recently, Di Natale \cite{DiN} provided the category of (unbounded) complexes of $\kk$-modules 
with non-negative decreasing filtrations,
with a cofibrantly generated model structure, with the above weak equivalences.
A generalization to higher stages of the results of Laumon and Illusie on filtered derived categories has been developed in 
 \cite{Paranjape} and \cite{CG2} for bounded below filtered complexes with biregular filtrations.
However, a model category approach accounting for the localization at higher stages of the spectral sequences
was missing in the literature.

The homotopy theory of bicomplexes has recently been studied by Muro and  Roitzheim in~\cite{MR18},
by considering the total weak equivalences as well as the equivalences given after taking horizontal and vertical cohomology.
This second class of equivalences corresponds to $\Ee_1$ in our setting. However, their techniques
do not allow for a generalization to higher stages. 
Moreover, their approach is restricted to the case of bicomplexes sitting in the right half plane.  Their methods do not extend to our setting, since they heavily use the fact that the spectral sequence of such a bicomplex is strongly convergent.
To our knowledge, the present paper contains the first treatment of $E_r$-quasi-isomorphisms 
in the context of model categories. We next explain our main results.
\medskip

Denote by $\fcpx$ the category of unbounded filtered cochain complexes of $\kk$-modules.
The spectral sequence of a filtered complex $A$ may be written as a quotient $E_r(A)\cong Z_r(A)/B_r(A)$
where $Z_r(A)$ and $B_r(A)$ denote the $r$-cycles and $r$-boundaries respectively.
Both $Z_r$ and $B_r$ are functorial for morphisms of filtered complexes.
For each $r\geq 0$, we provide three different cofibrantly generated model structures for filtered complexes.
These are summarized in the table below.

\begin{table}[h]\caption{Model structures for filtered complexes}
\renewcommand{\arraystretch}{1.2}
\begin{tabular}{llll}
&&weak equivalences & fibrations \\
\hline
$(\mathrm{A}_r)$&Theorem $\ref{modelrfilcomplexB}$& $E_r$-quasi-isomorphisms& $Z_r(f)$ surjective\\
$(\mathrm{B}_r)$&Theorem $\ref{modelrfilcomplexC}$& $E_r$-quasi-isomorphisms& $Z_0(f)$ and $E_i(f)$ surjective for all $i\leq r$\\
$(\mathrm{C}_r)$&Theorem $\ref{modelrfilcomplexA}$& $Z_r$-quasi-isomorphisms& $Z_r(f)$ surjective
\end{tabular}
\end{table}
\vspace{.5cm}

Model structure $(\mathrm{B}_r)$ is an easy consequence of $(\mathrm{A}_r)$, and allows for a characterization of fibrations in terms of $E_i$ instead of $Z_r$, which may prove to be more convenient in particular situations.
Note that in $(\mathrm{C}_r)$, weak equivalences are given by those morphisms $f:A\to B$ such that $Z_r(f)$ is a quasi-isomorphism of $r$-bigraded complexes. In particular, $(\mathrm{C}_0)$ has as weak equivalences the class of filtered quasi-isomorphisms (those morphisms inducing a quasi-isomorphism at each step of the filtration), classically considered in the study of filtered complexes.

The flexibility of filtered complexes allows comparisons of the above model structures when varying $r$ as we next explain.
Deligne introduced a pair of adjoint functors, called \textit{shift} and \textit{d\'{e}calage}, defined in the category of filtered complexes. The spectral sequences associated to these functors are related by a shift of indexing. In Theorem $\ref{model_decs}$ we show that 
shift and d\'{e}calage give Quillen equivalences of the model categories
\[(\mathrm{A}_0)\rightleftarrows(\mathrm{A}_1)\rightleftarrows (\mathrm{A}_2)\rightleftarrows\cdots\]
and the same is true for $(\mathrm{B}_r)$ and $(\mathrm{C}_r)$ respectively, when varying $r\geq 0$.

Denote by $\bcpx$ the category of bicomplexes of $\kk$-modules. We consider
the spectral sequence associated to a bicomplex 
defined as the spectral sequence associated to its total complex with the column filtration. (Of course,
similar results hold for the row filtration.) 
This spectral sequence  admits a very precise description
in terms of certain complexes that we call \textit{witness $r$-cycles}  and \textit{witness $r$-boundaries}, denoted 
by $ZW_r$ and $BW_r$ respectively (see Subsection $\ref{witness_defs}$). These functors have the advantage that they are representable in the category of bicomplexes. In fact, the representing complexes 
will play the role of the spheres and discs that are defined in the classical cofibrantly generated model structure for complexes of $\kk$-modules.
For each $r\geq 0$, we provide two different cofibrantly generated model structures for bicomplexes.
These are summarized in the following table.
\begin{table}[h]\caption{Model structures for bicomplexes}
\renewcommand{\arraystretch}{1.2}
\begin{tabular}{llll}
&&weak equivalences & fibrations \\
\hline
$(\mathrm{A}'_r)$&Theorem $\ref{mainteorB}$& $E_r$-quasi-isomorphisms& $f$ and $ZW_r(f)$ surjective\\
$(\mathrm{B}'_r)$&Theorem $\ref{mainteorC}$& $E_r$-quasi-isomorphisms& $E_i(f)$ surjective for all $i\leq r$ \\
\end{tabular}
\end{table}
\vspace{.5cm}

Again, $(\mathrm{B}'_r)$ is an easy consequence of $(\mathrm{A}'_r)$ and allows for a different characterization of fibrations.
Note that $(\mathrm{A}'_r)$ and $(\mathrm{B}'_r)$ are the model category structures obtained in analogy to 
$(\mathrm{A}_r)$ and $(\mathrm{B}_r)$ for filtered complexes.
An important difference from the case of filtered complexes is that, in the case of bicomplexes, we do not have the shift and d\'{e}calage functors comparing the different structures (see Remark $\ref{nodec}$).
This fact and the added difficulty in proving the main results for bicomplexes exhibit how these objects are much more rigid  than filtered complexes.
\medskip

The paper is organized as follows. Section~\ref{sec:prelim} covers background material on the categories
of filtered complexes and bicomplexes and on model structures. Section~\ref{sec:modelfcxs} presents the model structures
on filtered complexes and Section~\ref{sec:modelbcxs} gives the model structures on bicomplexes.

\subsection*{Notation}

Throughout this paper, we let $\kk$ denote a commutative ring with unit. Complexes will be cohomologically graded.

\section{Preliminaries}
\label{sec:prelim}
In this preliminary section, we collect the main definitions and known results on filtered complexes, bicomplexes and model categories that we will use throughout the paper.

\subsection{Bigraded complexes}

Throughout this section we let $r\geq 0$ be an integer.

\begin{defi}\label{D:r-complex} An \textit{$r$-bigraded complex}  is a $(\ZZ,\ZZ)$-bigraded $\kk$-module $A=\{A^{i,j}\}$ together with maps of $\kk$-modules
$\delta_r:A^{i,j}\to A^{i-r, j+1-r}$ such that $\delta_r^2=0$. A \textit{morphism of $r$-bigraded complexes} is a map of bigraded modules commuting with the differentials.
\end{defi}

We denote by $\rbc$ the category of 
$r$-bigraded complexes. The cohomology of every $r$-bigraded complex is a bigraded $\kk$-module and it has a natural class of quasi-isomorphisms associated to it.

We will use the following homological algebra constructions.

\begin{defi}
The \textit{translation} of an $r$-bigraded complex $A$ is the $r$-bigraded complex $T(A)$ given by 
\[T^{p,q}(A):=A^{p-r,q-r+1}.\]
\end{defi}
\begin{defi}
Let $f:A\to B$ be a morphism of $r$-bigraded complexes. The \textit{cone of $f$}
is the $r$-bigraded complex $(C(f),D)$ given by
\[C^{p,q}(f)=T^{p,q}(A)\oplus B^{p,q}=A^{p-r,q-r+1} \oplus B^{p,q}\text{ with }D(a,b)=(da,f(a)-db).\]
\end{defi}

An $r$-bigraded complex $A$ is called \textit{acyclic} if $H^{p,q}(A)=0$ for all $p,q\in\ZZ$.
Note that a morphism of $r$-bigraded complexes is a quasi-isomorphism if and only if its cone $C(f)$ is acyclic.

\subsection{Filtered complexes}We will consider unbounded complexes of $R$-modules
endowed with increasing filtrations indexed by the integers.

\begin{defi}\label{def:fm_obj}
A \textit{filtered $\kk$-module} $(A,F)$ is a family of $\kk$-modules $\{F_pA\}_{p\in\ZZ}$ 
indexed by the integers such that $F_{p-1}A\subseteq F_pA$ for all $p\in\ZZ$.
A \textit{morphism of filtered modules} is a morphism $f:A\to B$ of $\kk$-modules
which is \textit{compatible with filtrations}:
$f(F_pA)\subseteq F_pB$ for all $p\in\ZZ$.
\end{defi}

We will say that a filtered $\kk$-module $(A,F)$ is \textit{pure of weight $p$} if
\[
0=F_{p-1}A\subseteq F_pA=A.
\]
Given a morphism of filtered modules $f:(A,F)\to (B,F)$ we will let $F_pf:F_pA\to F_pB$ denote the restriction of $f$ to $F_pA$.

\begin{rmk}
If $f:(A,F)\to (B,F)$ is a morphism of filtered $\kk$-modules, then its kernel and cokernel are  given by 
\[F_p\Ker f= \Ker F_pf\text{ and }F_p\Coker f=F_pB/F_pB\cap f(A).\]
These constructions make the category of filtered modules into a pre-abelian 
category. In particular, finite limits and colimits exist.
\end{rmk}

\begin{defi}\label{def:fc_obj}A \textit{filtered complex} $(A,d,F)$ is a cochain complex $(A,d)\in\cpx$ together with
a filtration $F$ of each $\kk$-module $A^n$ such that 
$d(F_pA^n)\subseteq F_pA^{n+1}$ for all $p,n\in\ZZ$.
\end{defi}
Denote by $\fcpx$ the category of filtered complexes of $R$-modules.
Its morphisms are given by morphisms of complexes compatible with filtrations.

Every filtered complex $A$ has an associated spectral sequence $\{E_r(A),\delta_r\}_{r\geq 0}$.
The $r$-stage $E_r(A)$ is an $r$-bigraded complex and may be written as the quotient
\[E_r^{p,q}(A)\cong Z_r^{p,q}(A)/B_r^{p,q}(A),\] where the \textit{$r$-cycles} are given by 
\[Z_r^{p,n+p}(A):=F_pA^{n}\cap d^{-1}(F_{p-r}A^{n+1})\]
and the \textit{$r$-boundaries} are given by $B_0^{p,n+p}(A)=Z_0^{p-1,n+p-1}(A)$ and
\[B_r^{p,n+p}(A):=Z_{r-1}^{p-1,n+p-1}(A)+ dZ_{r-1}^{p+r-1,n+p+r-2}(A)\text{ for }r\geq 1.\]
Given an element $a\in Z_r(A)$, we will denote by $[a]_r$ its image in $E_r(A)$. For $[a]_r\in E_r(A)$,
we have $\delta_r([a]_r)=[da]_r$.
Note that both $Z_r$ and $B_r$ are functorial for morphisms of filtered complexes.

\begin{defi}
A morphism of filtered complexes $f:A\to B$ is called an \textit{$E_r$-quasi-isomorphism} if
the morphism $E_r(f)$ is a quasi-isomorphism of $r$-bigraded complexes.
\end{defi}

Denote by $\Ee_r$ the class of $E_r$-quasi-isomorphisms of $\fcpx$. This class is closed under composition, 
contains all isomorphisms of $\fcpx$,
satisfies the two-out-of-three property and is closed under retracts.

We will use the following result.

\begin{lem}\label{L:ffund} Let $r\geq 0$ and let $f:K\rightarrow L$  be a morphism of filtered complexes.
The following are equivalent.
\begin{enumerate}
\item The maps $Z_{r}(f)$ and $Z_{r+1}(f)$ are bidegree-wise surjective.
\item The maps $Z_{r}(f)$  and $E_{r+1}(f)$ are bidegree-wise surjective.
\end{enumerate}
\end{lem}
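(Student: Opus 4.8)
The plan is to observe first that bidegree-wise surjectivity of $Z_r(f)$ is common to both statements, so the task reduces to showing that, under the standing assumption that $Z_r(f)$ is bidegree-wise surjective, the maps $Z_{r+1}(f)$ and $E_{r+1}(f)$ are surjective simultaneously. The implication $(1)\Rightarrow(2)$ is the formal direction: the identification $E_{r+1}(A)\cong Z_{r+1}(A)/B_{r+1}(A)$ exhibits $E_{r+1}(f)$ as the map induced by $Z_{r+1}(f)$ on quotients, which is well defined because $f(B_{r+1}(K))\subseteq B_{r+1}(L)$ by functoriality of $B_{r+1}$. Since a surjection on numerators induces a surjection on quotients, $E_{r+1}(f)$ is surjective whenever $Z_{r+1}(f)$ is; I note that this direction does not even use surjectivity of $Z_r(f)$.

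For the substantial implication $(2)\Rightarrow(1)$, I would argue one bidegree at a time. Fix $z\in Z_{r+1}^{p,n+p}(L)$. Since $E_{r+1}(f)$ is surjective, I can choose $w\in Z_{r+1}^{p,n+p}(K)$ whose class maps to $[z]_{r+1}$, that is, $[f(w)]_{r+1}=[z]_{r+1}$ in $E_{r+1}(L)$. By definition of $E_{r+1}$ this means the error $f(w)-z$ lies in $B_{r+1}^{p,n+p}(L)$, and here the explicit description of the $(r+1)$-boundaries is crucial:
\[
B_{r+1}^{p,n+p}(L)=Z_r^{p-1,n+p-1}(L)+dZ_r^{p+r,n+p+r-1}(L),
\]
so I may write $f(w)-z=a+db$ with $a\in Z_r^{p-1,n+p-1}(L)$ and $b\in Z_r^{p+r,n+p+r-1}(L)$.

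Now the hypothesis that $Z_r(f)$ is bidegree-wise surjective lets me lift both error terms: choose $a'\in Z_r^{p-1,n+p-1}(K)$ with $f(a')=a$ and $b'\in Z_r^{p+r,n+p+r-1}(K)$ with $f(b')=b$. Setting $w':=w-a'-db'$ and using that $f$ commutes with $d$ gives $f(w')=f(w)-a-db=z$. It remains to check that $w'$ is again an $(r+1)$-cycle: applying the same boundary formula over $K$, both $a'$ and $db'$ lie in $B_{r+1}^{p,n+p}(K)\subseteq Z_{r+1}^{p,n+p}(K)$, whence $w'\in Z_{r+1}^{p,n+p}(K)$. Thus $w'$ is the desired preimage of $z$, and $Z_{r+1}(f)$ is surjective.

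The only real obstacle is bookkeeping: one must track the bidegrees so that the two summands of the error land in exactly the $Z_r$-groups whose surjectivity is assumed, and one must confirm that the correction terms stay inside $Z_{r+1}$. Both facts follow directly from the explicit formula for $B_{r+1}$ in terms of $Z_r$ together with the inclusion $B_{r+1}\subseteq Z_{r+1}$, so once the corrected element $w'$ is written down the verification is routine.
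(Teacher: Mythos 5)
Your proof is correct and follows essentially the same route as the paper's: reduce to $(2)\Rightarrow(1)$, lift a class via surjectivity of $E_{r+1}(f)$, decompose the error using $B_{r+1}=Z_r^{p-1,\ast}+dZ_r^{p+r,\ast}$, correct the lift using surjectivity of $Z_r(f)$, and check the corrected element is still an $(r+1)$-cycle. The only cosmetic difference is that you invoke the general inclusion $B_{r+1}\subseteq Z_{r+1}$ where the paper verifies the same containments element-wise via the filtration conditions.
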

\begin{proof} It suffices to prove $(2)\Rightarrow (1)$.
Let $b\in Z_{r+1}^{p,*}(L)$. The surjectivity of $E_{r+1}(f)$ gives $a\in Z_{r+1}^{p,*}(K)$ and $\beta\in B_{r+1}^{p,*}(L)$ such that $f(a)=b+\beta$.
Write $\beta=x+dy$ with $x\in Z_{r}^{p-1,*}(L)$ and $y\in Z_r^{p+r,*}(L)$. Surjectivity of $Z_r(f)$ gives $u\in Z_{r}^{p-1,*}(K)$ and $v\in Z_r^{p+r,*}(K)$ such that $f(u)=x$ and $f(v)=y$, so that
$f(a-u-dv)=b$. Note that one may see $u$ as an element in $F_pK^n$ with $du\in F_{p-1-r}K^{n+1}$. This gives $u\in Z_{r+1}^{p,n+p}$. Also, $dv\in F_pK^n$ satisfies $ddv=0$. Therefore $a-u-dv\in Z_{r+1}^{p,n+p}(K)$.
\end{proof}

\subsection{Bicomplexes}
\label{subsection:bgmods}

We consider $(\ZZ,\ZZ)$-bigraded $\kk$-modules $A=\{A\bgd{i}{j}\}$,
where elements of $A\bgd{i}{j}$ are said to have bidegree $(i,j)$. 
The \textit{total degree} of an element $a\in A\bgd{i}{j}$ is $|a|:=j-i$.
A morphism of bidegree $(p,q)$ maps $A\bgd{i}{j}$ to $A\bgd{i+p}{j+q}$.
We denote by $\bimod$  the category whose objects are $(\ZZ,\ZZ)$-bigraded $\kk$-modules and morphisms are bidegree $(0,0)$ maps. 

\begin{defi}
The \textit{total graded $\kk$-module  $\Tot(A)$} of a bigraded $\kk$-module $A=\{A\bgd{i}{j}\}$ is 
given by
\[
\Tot(A)^n:=\prod_{i\leq 0} A\bgd{i}{n+i}\oplus\bigoplus_{i>0} A\bgd{i}{n+i}.\]
The \textit{column filtration of $\Tot(A)$} is the filtration given by
\[F_p\Tot(A)^n:=\prod_{i\leq p} A\bgd{i}{n+i}\text{ for all }p,n\in\ZZ.\]
\end{defi}

\begin{defi}
A \textit{bicomplex} $(A,d_0,d_1)$ is a bigraded $\kk$-module  $A=\{A\bgd{i}{j}\}$ together with two differentials
$d_0:A\bgd{i}{j}\to A\bgd{i}{j+1}$ and $d_1:A\bgd{i}{j}\to A\bgd{i-1}{j}$ of bidegrees $(0,1)$ and $(-1,0)$ respectively, such that
$d_0d_1=d_1d_0$.
\end{defi}

\begin{defi}
A \textit{morphism of bicomplexes} $f:(A,d_0,d_1)\to (B,d_0,d_1)$ is a map of bigraded modules
$f:A\bgd{i}{j}\to B\bgd{i}{j}$ of bidegree $(0,0)$
such that $d_0f=fd_0$ and $d_1f=fd_1$. We denote by $\bcpx$ the category of bicomplexes.
\end{defi}

The category $\bcpx$ is symmetric monoidal with the usual tensor product of bicomplexes.

\begin{defi}
The \textit{total complex} of a bicomplex $(A,d_0,d_1)$
is the cochain complex given by $(\Tot(A),d)$, where $d:\Tot(A)^*\to \Tot(A)^{*+1}$ is defined
by
\[d(a)_j:=d_0(a_{j})+(-1)^nd_1(a_{j+1}),\text{ for }a=(a_i)_{i\in\ZZ}\in \Tot(A)^n.\]
Here $a_i\in A\bgd{i}{n+i}$ denotes the $i$-th component of $a$, and $d(a)_j$ is the $j$-th component of $d(a)$.
Similarly, if $f:A\rightarrow B$ is a morphism of bicomplexes then it induces the morphism of cochain complexes
$\Tot f$ given by $(\Tot f(a))_j=f(a_j)$.
\end{defi}

The construction above yields a functor 
\[\Tot:\bcpx\lra \fcpx,\] 
where the total complex is endowed with a filtered complex structure by the column filtration.  Of course, one could also construct 
such a functor using the row filtration, but we choose to fix our attention on the column filtration.
Thus, every bicomplex $(A,d_0,d_1)$ has an associated spectral sequence $\{E_r^{*,*}(A),\delta_r\}$,
which is functorial for morphisms of bicomplexes.
Moreover, for each $r\geq 0$, the $E_r$-term of the spectral sequence defines a functor 
\[E_r:\bcpx\lra \rbc.\] 
In good cases, for 
example if the bicomplex is first quadrant, the spectral sequence converges to the cohomology of the total complex.

The following result is well-known (see for example \cite{CFUG}).
\begin{lem}\label{classical_ss}
Let $(A,d_0,d_1)$ be a bicomplex. Then \[E_r^{p,q}(A)\cong Z_r^{p,q}(A)/B_r^{p,q}(A),\] where 
\[
Z_0^{p,q}(A):=A^{p,q}\text{ and }B_0^{p,q}(A):=0.
\]
\[
Z_1^{p,q}(A):=A\bgd{p}{q}\cap\Ker(d_0)\text{ and }B_1^{p,q}(A):=A\bgd{p}{q}\cap \Img(d_0).
\]
For $r\geq 2$, the $r$-cycles are given by
\[
Z_r^{p,q}(A):=\left\{
\begin{array}{ll}
a_0\in A\bgd{p}{q}\, |\, &d_0 a_0=0 \text{ and there exist } a_i\in A\bgd{p-i}{q-i} \text{ for }1\leq i\leq r-1\\
&\text{with }
d_1 a_{i-1}=d_0 a_i\text{ for all }1\leq i\leq r-1
\end{array}
\right\}
\]
and the $r$-boundaries are given by
\[
B_r^{p,q}(A):=\left\{
\begin{array}{ll}
x \in A\bgd{p}{q}\,|\,&\text{there exist } b_i\in A\bgd{p+r-1-i}{q+r-2-i} \text{ for }0\leq i\leq r-1\\
&\text{with } x=d_0b_{r-1}+d_1b_{r-2},\\
 &\text{and }d_0b_0=0, \\
 &\text{and } d_1 b_{i-1}=d_0 b_i, \text{ for all }1\leq i\leq r-2
\end{array}
\right\}.
\]
We have $\delta_0=d_0$ and $\delta_1[a]=[d_1a]$. For all $r\geq 2$ we have  $\delta_r[a_0]=[d_1 a_{r-1}]$.
\end{lem}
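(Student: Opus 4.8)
The plan is to derive the formula for $E_r(A)$ of a bicomplex directly from the general spectral sequence formula for filtered complexes, by specializing the description of $r$-cycles and $r$-boundaries given earlier to the column filtration of the total complex. Recall that for a filtered complex we have $Z_r^{p,n+p}(A) = F_pA^n \cap d^{-1}(F_{p-r}A^{n+1})$, and for the total complex $\Tot(A)$ with column filtration, an element of $F_p\Tot(A)^n$ is a tuple $(a_i)$ with $a_i \in A^{i,n+i}$ supported in $i \leq p$. The idea is to translate the condition $d(a) \in F_{p-r}\Tot(A)^{n+1}$ into a system of equations on the components $a_i$, using the explicit formula $d(a)_j = d_0(a_j) + (-1)^n d_1(a_{j+1})$.

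First I would set up the bijection between an element of $Z_r^{p,q}(A)$ as defined for bicomplexes and an element of $Z_r^{p,q}(\Tot A)$ in the filtered sense, where $q = n+p$. Given $a \in F_p\Tot(A)^n$ with top component $a_p = a_0$ (reindexing $a_{p-i} =: a_i$ to match the statement's notation), the requirement that $d(a)$ lie in filtration level $\leq p-r$ forces the components of $d(a)$ in positions $p-r+1, \dots, p$ to vanish. Writing these out gives $d_0(a_p) = 0$ in position $p$, and $d_0(a_{p-i}) + (-1)^n d_1(a_{p-i+1}) = 0$ for positions $p-i$ with $1 \leq i \leq r-1$; up to the sign convention these are exactly the relations $d_0 a_0 = 0$ and $d_1 a_{i-1} = d_0 a_i$. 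One should be a little careful that the representative is determined by its top component modulo lower-filtration terms, which is why the quotient by boundaries produces a well-defined element; the differential $\delta_r[a_0] = [d_1 a_{r-1}]$ then reads off from the next component of $d(a)$, namely position $p-r$, which is where the obstruction to staying in the kernel lives.

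For the boundaries I would apply the filtered formula $B_r^{p,n+p}(A) = Z_{r-1}^{p-1,n+p-1}(A) + dZ_{r-1}^{p+r-1,n+p+r-2}(A)$ and expand both summands using the cycle description just obtained, together with the base cases $Z_0 = A^{p,q}$ from Lemma \ref{classical_ss}. The term $dZ_{r-1}^{p+r-1,*}$ contributes elements of the form $d_0 b_{r-1} + d_1 b_{r-2}$ coming from an $(r-1)$-cycle $(b_i)$ starting in column $p+r-1$, while the lower-filtration term $Z_{r-1}^{p-1,*}$ gets absorbed into the conditions defining the $b_i$; matching indices carefully yields precisely the stated relations $d_0 b_0 = 0$ and $d_1 b_{i-1} = d_0 b_i$ for $1 \leq i \leq r-2$. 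The cases $r=0$ and $r=1$ should be checked separately against the base-case definitions of $Z_0, Z_1, B_0, B_1$, and the differentials $\delta_0 = d_0$, $\delta_1[a] = [d_1 a]$ verified directly.

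The main obstacle I expect is bookkeeping: keeping the reindexing consistent (the statement uses $a_i$ for the component in column $p-i$, whereas the total complex naturally indexes by absolute column $i$), tracking the sign $(-1)^n$ from the definition of $d$ on $\Tot(A)$, and confirming that the lower-filtration tail $a_{p-r}, a_{p-r-1}, \dots$ of a cycle representative does not interfere with the equivalence class in $E_r$. None of these steps is conceptually deep, but the indexing on the boundary formula in particular requires care to match the ranges $0 \leq i \leq r-1$ and $1 \leq i \leq r-2$ exactly. Since the filtered formulas are already established, the proof is essentially an unwinding of definitions, and I would present it as a careful but routine translation rather than introducing new machinery.
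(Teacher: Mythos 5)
The paper does not prove this lemma at all --- it is stated as well known, with a pointer to \cite{CFUG} --- so there is no internal argument to compare yours against; judged on its own terms, your proposal is the standard derivation and it goes through. One word in your first step should be corrected, though: there is no \emph{bijection} between bicomplex $r$-cycles and filtered $r$-cycles of the total complex. The top-component map $\tau\colon Z_r^{p,q}(\Tot(A))\to Z_r^{p,q}(A)$, $a\mapsto a_p$, has a large kernel (for instance every element of $F_{p-r}\Tot(A)$ is a filtered $r$-cycle with zero top component). What your plan actually needs, and what your sketched steps do deliver, are three statements: (i) $\tau$ is surjective --- assemble the witnesses $a_i$ into a tuple supported in columns $p-r+1,\dots,p$, inserting signs $(-1)^{i(n+1)}$ so that $d(a)_j=0$ for all $j>p-r$; (ii) $\tau$ carries filtered $r$-boundaries into the stated set $B_r^{p,q}(A)$ --- the summand $Z_{r-1}^{p-1,*}$ contributes nothing in column $p$, while $dZ_{r-1}^{p+r-1,*}$ contributes exactly the elements $d_0b_{r-1}+d_1b_{r-2}$ subject to the stated witness relations; (iii) a filtered $r$-cycle whose top component lies in $B_r^{p,q}(A)$ is a filtered $r$-boundary. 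Point (iii) is the precise form of your remark that the lower tail ``does not interfere'': subtract $dv$, where $v\in Z_{r-1}^{p+r-1,*}$ realizes the witnesses $b_i$; the remainder lies in $F_{p-1}$ and its differential lies in $F_{p-r}=F_{(p-1)-(r-1)}$, hence the remainder belongs to $Z_{r-1}^{p-1,*}$, which is contained in the filtered $B_r$. Together (i)--(iii) give exactly the isomorphism of quotients, and your reading of $\delta_r$ from the column-$(p-r)$ component is then correct. Finally, on the sign you flagged: with the paper's convention $d(a)_j=d_0(a_j)+(-1)^nd_1(a_{j+1})$, the class appearing in column $p-r$ is $(-1)^{n+(r-1)(n+1)}[d_1a_{r-1}]$, not $[d_1a_{r-1}]$; this is harmless, since one can rescale the isomorphism bidegree-wise by signs depending only on $q-p$, but it must be addressed if you assert $\delta_r[a_0]=[d_1a_{r-1}]$ on the nose. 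None of this is a gap in the approach; it is exactly the bookkeeping you identified, and the argument completes without obstruction.
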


\begin{defi}
 Let $r\geq 0$. A morphism of bicomplexes $f:(A,d_0,d_1)\to (B, d_0,d_1)$ is said to be an \textit{$E_r$-quasi-isomorphism} if
the morphism $E_r(f):E_r(A)\to E_r(B)$ at the $r$-stage of the associated spectral 
sequence is a quasi-isomorphism of $r$-bigraded complexes (that is, $E_{r+1}(f)$ is an isomorphism).
\end{defi}

Denote by $\Ee_r$ the class of $E_r$-quasi-isomorphisms of $\bcpx$. This class is closed under composition, 
contains all isomorphisms of $\bcpx$,
satisfies the two-out-of-three property and is closed under retracts.

\subsection{Model categories}

We collect some definitions and results on  cofibrantly generated model categories from \cite{Hovey}.

\begin{defi}
Let $\Cc$ be a complete and cocomplete category and $I$ a class of maps in $\Cc$.
\begin{enumerate}[(i)]
 \item A morphism is called \textit{$I$-injective} (resp.~\textit{$I$-projective}) if it has the right (resp.~left)
 lifting property with respect to morphisms in $I$. We write
 \[\cl{I}{inj}:=\mathrm{RLP}(I)\text{ and }\cl{I}{proj}:=\mathrm{LLP}(I).\]
 \item  A morphism is called an \textit{$I$-fibration} (resp.~\textit{$I$-cofibration}) if it has the right (resp.~left) lifting property with respect to 
 $I$-projective (resp.~$I$-injective) morphisms. We write
 \[\cl{I}{fib}:=\mathrm{RLP}(\cl{I}{proj})\text{ and }\cl{I}{cof}:=\mathrm{LLP}(\cl{I}{inj}).\]
 \item A map is a \textit{relative $I$-cell complex} if it is a transfinite composition of pushouts of
elements of $I$. We denote by $\cl{I}{cell}$ the class of relative $I$-cell complexes.
\end{enumerate}
\end{defi}

\begin{defi}A model category $\Cc$ is said to be  \textit{cofibrantly
generated} if there are sets $I$ and $J$ of maps such that the following conditions hold.
\begin{enumerate}[(1)]
 \item The domains of the maps of $I$ are small relative to $\cl{I}{cell}$.
\item  The domains of the maps of $J$ are small relative to $\cl{J}{cell}$.
\item  Fibrations are $J$-injective.
\item Trivial fibrations are $I$-injective.
\end{enumerate}
The set $I$ is called the \textit{set of generating cofibrations}, and $J$ the \textit{set of generating trivial cofibrations}.
\end{defi}

The following is a consequence of Kan's Theorem 
(cf. \cite[Theorem 11.3.1]{Hir} or
\cite[Theorem 2.1.19]{Hovey}) 
using compact domains in the sense of Di Natale in \cite{DiN}.

\begin{teo}[D.~M.~Kan]\label{HoveyTheorem}
Suppose $\Cc$ is a category with all small colimits and limits.
Let $\Ww$ be a subcategory of $\Cc$ and $I$ and $J$ sets of maps in $\Cc$.
Then there
is a cofibrantly generated model structure on $\Cc$ with $I$ as the set of generating
cofibrations, $J$ as the set of generating trivial cofibrations, and $\Ww$ as the subcategory
of weak equivalences if and only if the following conditions are satisfied.
\begin{enumerate}[(1)]
 \item  The subcategory $\Ww$ satisfies the two out of three property and is closed under
retracts.
 \item  The domains of $I$ are compact relative to $\cl{I}{cell}$.
 \item The domains of $J$ are compact relative to $\cl{J}{cell}$.
 \item  $\cl{J}{cof}\subseteq \Ww$.
 \item $\cl{I}{inj}= \Ww\cap \cl{J}{inj}$.
\end{enumerate}
\end{teo}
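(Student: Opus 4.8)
The plan is to treat this as a recognition theorem in the spirit of the classical Kan result (Hovey~2.1.19, Hirschhorn~11.3.1), the only novelty being that the small object argument is replaced by Di~Natale's \emph{compact} object argument, powered by hypotheses~(2) and~(3). The \textbf{only if} direction is essentially unwinding the definition of a cofibrantly generated model category: in such a structure one has $\cl{I}{inj}=\{\text{trivial fibrations}\}=\Ww\cap\{\text{fibrations}\}=\Ww\cap\cl{J}{inj}$, giving~(5); trivial cofibrations equal $\cl{J}{cof}$ and lie in $\Ww$, giving~(4); condition~(1) is a model axiom; and~(2),(3) are the compactness assumption built into the definition. So I would spend the work on the \textbf{if} direction. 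Define the candidate structure by declaring weak equivalences to be $\Ww$, cofibrations to be $\cl{I}{cof}$, and fibrations to be $\cl{J}{inj}$; then trivial fibrations should be $\cl{I}{inj}$ and trivial cofibrations $\cl{J}{cof}$, which the proof must justify. Closure under retracts is automatic for $\cl{I}{cof}$ and $\cl{J}{inj}$ as lifting classes, holds for $\Ww$ by~(1), and two-out-of-three is~(1); limits and colimits are given.

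The next step is the factorizations. Applying the compact object argument to $I$ (licensed by~(2)) factors any $f$ as $f=pi$ with $i\in\cl{I}{cell}\subseteq\cl{I}{cof}$ and $p\in\cl{I}{inj}$; applying it to $J$ (licensed by~(3)) factors $f=p'i'$ with $i'\in\cl{J}{cell}\subseteq\cl{J}{cof}$ and $p'\in\cl{J}{inj}$. To recognise these as the model factorizations I would record two formal consequences of~(5). First, $\cl{I}{inj}=\Ww\cap\cl{J}{inj}$ literally says the $I$-injectives are exactly the maps that are both fibrations and weak equivalences, i.e. the trivial fibrations; so the $I$-factorization is (cofibration)$\circ$(trivial fibration). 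Second, since $\cl{I}{inj}\subseteq\cl{J}{inj}$, taking left lifting classes reverses the inclusion to give $\cl{J}{cof}\subseteq\cl{I}{cof}$, and combining with~(4) yields $\cl{J}{cof}\subseteq\Ww\cap\cl{I}{cof}$; hence the $J$-factorization is (trivial cofibration)$\circ$(fibration).

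The crucial step is the reverse inclusion $\Ww\cap\cl{I}{cof}\subseteq\cl{J}{cof}$, which upgrades the previous containment to an equality $\Ww\cap\cl{I}{cof}=\cl{J}{cof}$ and thereby pins down the trivial cofibrations. I would use the standard retract argument: given $f\in\Ww\cap\cl{I}{cof}$, factor it through the $J$-compact object argument as $f=pi$ with $i\in\cl{J}{cof}\subseteq\Ww$ and $p\in\cl{J}{inj}$; two-out-of-three forces $p\in\Ww$, so $p\in\Ww\cap\cl{J}{inj}=\cl{I}{inj}$; then the square with $f$ on the left, $p$ on the right, $i$ across the top and the identity across the bottom admits a lift because $f\in\cl{I}{cof}$ and $p\in\cl{I}{inj}$, exhibiting $f$ as a retract of $i$, whence $f\in\cl{J}{cof}$ by retract-closure. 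With trivial cofibrations identified as $\cl{J}{cof}$ and trivial fibrations as $\cl{I}{inj}$, both lifting axioms are immediate by definition: $\cl{I}{cof}$ lifts against $\cl{I}{inj}$, and $\cl{J}{cof}$ lifts against $\cl{J}{inj}$.

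I expect the main obstacle to be purely at the level of the factorizations, namely verifying that Di~Natale's compact object argument genuinely converges and produces the required functorial factorizations with relative $I$-cell (resp.\ $J$-cell) left factors. This is exactly where compactness relative to $\cl{I}{cell}$ and $\cl{J}{cell}$ in hypotheses~(2),(3) substitutes for the smallness used in the classical statement, and it is the one point where more than formal manipulation of weak factorization systems is needed; once the two factorizations are in hand, everything above is bookkeeping with lifting properties and the two-out-of-three property.
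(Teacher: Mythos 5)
Your proposal is correct, but it is worth noting that the paper does not prove this statement at all: it is quoted as a known result (Kan's recognition theorem, cf.\ Hovey, Theorem 2.1.19, or Hirschhorn, Theorem 11.3.1), adapted by replacing smallness with compactness in the sense of Di Natale so that the compact object argument can run in categories having only sequential colimits and finite limits. What you have written is a faithful reconstruction of the standard proof in those references, specialized to the hypotheses as stated here: the only-if direction is definitional unwinding; in the if direction you correctly extract from (5) the inclusion $\cl{I}{inj}\subseteq \cl{J}{inj}$, hence $\cl{J}{cof}\subseteq \cl{I}{cof}$, which together with (4) gives $\cl{J}{cof}\subseteq \Ww\cap\cl{I}{cof}$ (Hovey's condition (4)), and the equality in (5) gives $\Ww\cap\cl{J}{inj}\subseteq\cl{I}{inj}$, which is precisely the branch of Hovey's condition (6) under which his proof proceeds by the retract argument you describe, yielding $\Ww\cap\cl{I}{cof}=\cl{J}{cof}$ and pinning down both classes of trivial maps. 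The one ingredient you defer --- that the compact object argument powered by hypotheses (2) and (3) actually produces the two factorizations --- is exactly the ingredient the paper defers to Di Natale, and it is where the paper's folklore remark (sequential colimits and finite limits suffice) is needed; so your proof is complete to the same standard as the source the paper relies on. One small definitional caveat, inherited from the paper rather than introduced by you: the paper's stated definition of a cofibrantly generated model category asks for \emph{small} domains, while the theorem's conditions (2),(3) ask for \emph{compact} domains, so for the statement to be a genuine ``if and only if'' (in particular for your only-if direction) one must read ``cofibrantly generated'' with the compactness convention, as both you and the paper implicitly do.
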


It is a folklore result that it is in fact enough to have sequential colimits and finite limits for the conclusion to hold. 
With this is mind, the categories of filtered complexes and bicomplexes we will consider satisfy the (weakened) assumptions of this theorem as well as conditions $(1),(2)$ and $(3)$. Indeed, the category $\bcpx$ of bicomplexes is abelian and has all small limits and colimits. The category of filtered complexes $\fcpx$ has finite limits and sequential colimits,
as shown by Di Natale~\cite{DiN}.

\section{Model category structures on filtered complexes}
\label{sec:modelfcxs}

In this section, we present three model categories for filtered complexes, each of them depending on an integer $r\geq 0$ fixing the stage of the spectral sequence at which we localize.
We also compare the model categories obtained when we vary $r$, via the functors shift and d\'{e}calage.

\subsection{Representability of the cycles and boundaries functors}
We next show that the functors $Z_r$ and $B_r$ defining the spectral sequence of a filtered complex,
are representable by filtered complexes.

We will denote by $R_{(p)}$ the $R$-module given by $R$ concentrated in pure weight $p$.
The notation $R_{(p)}^n$ means that we consider it in degree $n$ within a filtered complex.

\begin{defi}Let $p,n\in\ZZ$. For all $r\geq 0$ let
\[ \Zz_r(p,n):= \left(\kk_{(p)}^{n}\stackrel{1}{\lra}\kk_{(p-r)}^{n+1}\right)\]
be the filtered complex whose only non-trivial degrees are $n$ and $n+1$ and whose only non-trivial 
 differential is given by the identity of $\kk$, and is compatible with filtrations.
For all $r\geq 1$ define
\[\Bb_r(p,n):=\left(
\kk_{(p+r-1)}^{n-1}\stackrel{\vmat{1}{0}}{\lra}\kk_{(p)}^{n}\oplus \kk_{(p-1)}^{n}\stackrel{(0,1)}{\lra}\kk_{(p-r)}^{n+1}
\right).\]

For all $r\geq 1$ define a morphism of filtered complexes
\[\varphi_r: \Zz_{r}(p,n)\lra \Bb_{r}(p,n)\]
via the following diagram:
\[\xymatrix{
&\kk_{(p)}^{n}\ar[r]\ar[d]^{\vmat{1}{1}}&\kk_{(p-r)}^{n+1}\ar[d]^1\\
\kk_{(p+r-1)}^{n-1}\ar[r]&\kk_{(p)}^{n}\oplus \kk_{(p-1)}^{n}\ar[r]&\kk_{(p-r)}^{n+1}
}
\]
The vertical arrows are defined via the identity on $R$ and are easily seen to be
compatible with filtrations. 
\end{defi}

The following two lemmas are direct consequences of the definitions.

\begin{lem}\label{L:fpushout} For $r\geq 1$, we have 
\[\Bb_r(p,n)= \Zz_{r-1}(p+r-1,n-1)\oplus \Zz_{r-1}(p-1,n)\]
and the diagram
 \[
 \xymatrix{
 \Zz_r(p,n)\ar[d]_{\varphi_{r}}\ar[r]&0\ar[d]\\
 \Bb_r(p,n)\ar[r]&\Zz_r(p+r-1,n-1) } \]
 is a pushout diagram.\qedhere
\end{lem}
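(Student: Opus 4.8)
The plan is to prove the two assertions separately, each by directly unwinding the definitions of $\Zz_r$ and $\Bb_r$ and comparing.

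For the decomposition, I would compute the two summands on the right-hand side from the definition of $\Zz_{r-1}$. Substituting the arguments gives $\Zz_{r-1}(p+r-1,n-1)=\left(\kk_{(p+r-1)}^{n-1}\xrightarrow{1}\kk_{(p)}^{n}\right)$, since $(p+r-1)-(r-1)=p$, concentrated in degrees $n-1,n$; and $\Zz_{r-1}(p-1,n)=\left(\kk_{(p-1)}^{n}\xrightarrow{1}\kk_{(p-r)}^{n+1}\right)$, since $(p-1)-(r-1)=p-r$, concentrated in degrees $n,n+1$. Forming the direct sum degreewise then produces $\kk_{(p+r-1)}$ in degree $n-1$, $\kk_{(p)}\oplus\kk_{(p-1)}$ in degree $n$, and $\kk_{(p-r)}$ in degree $n+1$, and the two identity differentials of the summands assemble into exactly $\vmat{1}{0}$ and $(0,1)$. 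This agrees termwise and differentialwise with $\Bb_r(p,n)$, which settles the first claim.

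For the pushout, the key preliminary observation is that, because the top right corner of the square is the zero object, the pushout of the span $0\leftarrow\Zz_r(p,n)\xrightarrow{\varphi_r}\Bb_r(p,n)$ is simply $\Coker(\varphi_r)$; in particular it exists as a cokernel even though $\fcpx$ is only pre-abelian. So the task reduces to identifying $\Coker(\varphi_r)$ with $\Zz_r(p+r-1,n-1)$ and checking that the induced projection is the bottom map of the diagram. I would compute the cokernel degree by degree using the filtered cokernel formula $F_p\Coker f=F_pB/(F_pB\cap f(A))$: in degree $n-1$ the source is zero, so we get $\kk_{(p+r-1)}$; in degree $n+1$ the map $\varphi_r$ is the identity of $\kk_{(p-r)}$, so the cokernel vanishes; and in degree $n$ the map is the diagonal $\vmat{1}{1}:\kk_{(p)}\to\kk_{(p)}\oplus\kk_{(p-1)}$, whose cokernel is a single copy of $\kk$. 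Transporting the differential $\vmat{1}{0}$ of $\Bb_r(p,n)$ through these identifications yields the complex $\kk_{(p+r-1)}^{n-1}\xrightarrow{1}\kk_{(p-1)}^{n}$, which is precisely $\Zz_r(p+r-1,n-1)$ up to a harmless sign in the differential (absorbed by the sign automorphism of the target).

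The only step requiring genuine care, rather than bookkeeping, is the filtration on the degree-$n$ cokernel: the underlying module $(\kk\oplus\kk)/\mathrm{diag}$ is abstractly $\kk$, but I must pin down its weight. The point is that the diagonal meets $F_{p-1}(\kk_{(p)}\oplus\kk_{(p-1)})=0\oplus\kk_{(p-1)}$ only in $0$, so the inclusion of $F_{p-1}$ already induces an isomorphism onto the quotient; this forces the cokernel to be pure of weight $p-1$ rather than $p$. This weight drop from $p$ to $p-1$ is exactly what matches the internal weight of $\Zz_r(p+r-1,n-1)$ in degree $n$, and it is the mechanism by which the square becomes a genuine pushout. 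I expect this weight computation to be the main subtlety of the argument, everything else being a direct comparison of the two explicitly given complexes.
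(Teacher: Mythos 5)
Your proof is correct, and it follows the same route as the paper, which states this lemma (without written proof) as a direct consequence of the definitions: one checks the direct sum decomposition termwise, and identifies the pushout as the filtered cokernel of $\varphi_r$ computed via $F_p\Coker f=F_pB/(F_pB\cap f(A))$. Your observation that the degree-$n$ quotient $(\kk_{(p)}\oplus\kk_{(p-1)})/\mathrm{diag}$ is pure of weight $p-1$ is exactly the key bookkeeping point that makes the cokernel equal $\Zz_r(p+r-1,n-1)$, so nothing is missing.
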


\begin{lem}\label{equivalencies_fil}
Let $r\geq 0$ and let $p,n\in\ZZ$.  Let $A$ be a filtered complex. 
 \begin{enumerate}
  \item Giving a map of filtered complexes $\Zz_r(p,n)\to A$ is equivalent to giving $a\in Z_r^{p,n+p}(A)$.
  \item Giving a map of filtered complexes $\Bb_r(p,n)\to A$ is equivalent
  to giving a pair $(b,c)$ with $b\in Z_{r-1}^{p-1,n+p-1}(A)$ and $c\in Z_{r-1}^{p+r-1,n+p+r-2}(A)$.
   \item\label{diagramfil} Having a solid diagram of morphisms of filtered complexes
\[
\xymatrix{
\Zz_{r+1}(p,n)\ar[d]_{\varphi_{r+1}}\ar[r]&A\ar[d]^f\\
\Bb_{r+1}(p,n)\ar[r]\ar@{.>}[ur]&B
}
\]
is equivalent to having a triple $(a,b,c)$ where $a\in Z_{r+1}^{p,n+p}(A)$,
$b\in Z_{r}^{p-1,n+p-1}(B)$ and $c\in Z_{r-1}^{p+r-1,n+p+r-2}(B)$ are such that $f(a)=b+dc$.
  \item Having a lift in the above solid diagram is equivalent to having a pair $(b',c')$ where 
  $b'\in Z_{r}^{p-1,n+p-1}(A)$ and $c'\in Z_{r}^{p+r,n+p+r-1}(A)$
  satisfy $a=b'+dc'$ with $f(b')=b$ and $f(c')=c$.\qedhere
 \end{enumerate}
\end{lem}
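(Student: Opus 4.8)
The plan is to exploit that each of $\Zz_r(p,n)$ and $\Bb_r(p,n)$ is, as a filtered complex, freely generated by a handful of generators whose images under the differential are prescribed. Consequently a morphism out of either object is completely determined by the images of the generators sitting in the lowest cohomological degree, and the only constraints on those images are compatibility with filtrations and with the differential. Unwinding exactly these two constraints is what produces the cycle conditions, so every item reduces to careful bookkeeping of bidegrees and filtration weights.

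For (1), a morphism $\Zz_r(p,n)\to A$ is determined by the image $a\in A^n$ of the generator of $\kk_{(p)}^n$. Compatibility with filtrations forces $a\in F_pA^n$, and since the generator of $\kk_{(p-r)}^{n+1}$ is the differential of the first, its image is $da$, which must lie in $F_{p-r}A^{n+1}$; together these read precisely $a\in F_pA^n\cap d^{-1}(F_{p-r}A^{n+1})=Z_r^{p,n+p}(A)$, and conversely any such $a$ defines a morphism. For (2) I would not argue from scratch but invoke the splitting $\Bb_r(p,n)=\Zz_{r-1}(p+r-1,n-1)\oplus\Zz_{r-1}(p-1,n)$ of Lemma \ref{L:fpushout}: a morphism out of a direct sum is a pair of morphisms out of the summands, and applying (1) to each summand gives $c\in Z_{r-1}^{p+r-1,n+p+r-2}(A)$ and $b\in Z_{r-1}^{p-1,n+p-1}(A)$.

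For (3) I would combine (1) and (2). The top map is an element $a\in Z_{r+1}^{p,n+p}(A)$, and the bottom map is, by (2) applied to $\Bb_{r+1}(p,n)$, a pair $(b,c)$ of cycles in $B$ of the bidegrees dictated by (2). Commutativity of the square need only be checked on the generator $x$ of $\kk_{(p)}^n\subseteq\Zz_{r+1}(p,n)$, since both composites are morphisms of filtered complexes and $x$ generates. By definition $\varphi_{r+1}$ sends $x$ to $\vmat{1}{1}$, that is to $u+v$, where $v$ is the degree-$n$ generator of the $\Zz_r(p-1,n)$-summand (whose image is $b$) and $u$ is the degree-$n$ generator of the other summand, which is the differential of that summand's degree-$(n-1)$ generator (whose image is $c$), so that the image of $u$ is $dc$. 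Hence the left-then-bottom composite sends $x$ to $b+dc$ while the top-then-right composite sends it to $f(a)$, and commutativity reads exactly $f(a)=b+dc$.

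For (4) a lift is a morphism $\ell\colon\Bb_{r+1}(p,n)\to A$, which by (2) is a pair $(b',c')$ of cycles in $A$ with $b'\in Z_r^{p-1,n+p-1}(A)$ and $c'\in Z_r^{p+r,n+p+r-1}(A)$. The lower triangle $f\ell=(\text{bottom map})$ amounts, on the two generators $v$ and $w$, to $f(b')=b$ and $f(c')=c$; the upper triangle $\ell\varphi_{r+1}=(\text{top map})$, evaluated on $x$ exactly as in (3), reads $\ell(u+v)=d c'+b'=a$, i.e.\ $a=b'+dc'$. This yields the asserted pair, and the converse construction is immediate. The one point where care is genuinely needed—and the only place an error creeps in—is the weight bookkeeping: one must check that the filtration levels of $u,v,w$ land the images $b',c'$ in the precise groups $Z_r^{p-1,n+p-1}$ and $Z_r^{p+r,n+p+r-1}$, and that generators in degrees other than $n$ and $n-1$ carry no independent data, their images being forced by the differential. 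Once this indexing is pinned down, each equivalence is a direct reading of the two commutativity relations.
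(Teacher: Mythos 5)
Your proof is correct and is exactly the argument the paper intends: the paper dismisses this lemma as a ``direct consequence of the definitions,'' and your verification --- evaluating morphisms on the lowest-degree generators, using the splitting $\Bb_r(p,n)=\Zz_{r-1}(p+r-1,n-1)\oplus\Zz_{r-1}(p-1,n)$ of Lemma \ref{L:fpushout} for (2), and checking commutativity only on the weight-$p$ degree-$n$ generator since the remaining generators' images are forced by the differential --- is precisely that unwinding. Note moreover that the bidegree your bookkeeping produces for $c$ in item (3), namely $c\in Z_r^{p+r,n+p+r-1}(B)$, is the correct one: the group $Z_{r-1}^{p+r-1,n+p+r-2}(B)$ appearing in the printed statement is a typo (the index was not shifted from $r$ to $r+1$), as one sees from consistency with items (2) and (4), with the definition $B_{r+1}^{p,n+p}=Z_r^{p-1,n+p-1}+dZ_r^{p+r,n+p+r-1}$, and with the way the lemma is invoked in the proof of Proposition \ref{I_r}.
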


\begin{rmk}
All of these statements can be made functorial, so that, for example the functor  $Z_r^{p,n+p}$ is the representable 
functor $\fcpx(\Zz_r(p,n), -)$.
\end{rmk}

\subsection{Some constructions in filtered homological algebra}
We collect some basic homological algebra constructions for filtered complexes that we will use in the sequel.

\begin{defi}
The \textit{$r$-translation} of a filtered complex $(A,d,F)$ is the filtered complex $(T_r(A),d,F)$ given by 
\[F_pT_r^n(A):=F_{p-r}A^{n+1}.\]
The \textit{$r$-cone} $(C_r(f),D,F)$ of a morphism of filtered complexes $f:A\to B$
is the filtered complex given by
\[F_pC_r(f)^n:=F_pT_r^n(A)\oplus F_p B^n=F_{p-r}A^{n+1}\oplus F_pB^n\text{ with }D(a,b)=(da,f(a)-db).\]
\end{defi}

\begin{rmk}
For a morphism of filtered complexes $f:A\to B$ we have 
\[C^{p,n+p}(E_r(f))=E_r^{p,n+p}(C_r(f))=E_r^{p-r,n+p+1-r}(A)\oplus E_r^{p,n+p}(B),\]
with $\delta_r([a]_r, [b]_r)=([da]_r, [f(a)-db]_r)$.
In particular, $f$ is an $E_r$-quasi-isomorphism if and only if
the $r$-bigraded complex $E_r(C_r(f))$ is acyclic.
\end{rmk}

\begin{notation}\label{mmr}Given a filtered complex $(A,d,F)$ we will denote by
$\Mm_r(A):=T_r^{-1}C_r(1_A)$ the filtered complex given by the cone of the identity, shifted conveniently. We have
\[F_p\Mm_r(A)=F_pA^{n}\oplus F_{p+r}A^{n-1}\]
and the projection to the first component $\pi_1:\Mm_r(A)\to A$ induces a bidegree-wise surjection $Z_k(\pi_1)$
for all $0\leq k\leq r$. Note also that $E_r(\Mm_r(A))$ is acyclic.
\end{notation}

\begin{defi}
Let $f,g:A\to B$ be two morphisms of filtered complexes. 
An \textit{$r$-homotopy from $f$ to $g$} is given by
a degree preserving filtered map $h:A\to T_r^{-1}(B)$ such that $dh+hd=g-f$.
This is equivalent to having 
a collection of morphisms of $\kk$-modules
$h^n:A^n\to B^{n-1}$ such that $dh+hd=g-f$ and $h^n(F_pA^n)\subseteq F_{p+r}B^{n-1}$.
We write $h:f\simr{r}g$.
\end{defi}

The following result exhibits how $r$-homotopies are the right notion to consider when 
localizing with respect to $E_r$-quasi-isomorphisms.

\begin{prop}[\cite{CaEil}, p.~321]Let $f,g:A\to B$ be two morphisms of filtered complexes such that $f\simr{r}g$. Then
 $E_{r+1}(f)=E_{r+1}(g)$.
\end{prop}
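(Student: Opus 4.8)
The plan is to prove the pointwise statement that for every $a\in Z_{r+1}^{p,n+p}(A)$ the difference $g(a)-f(a)$ lies in $B_{r+1}^{p,n+p}(B)$. Since $E_{r+1}(f)$ and $E_{r+1}(g)$ send the class $[a]_{r+1}$ to $[f(a)]_{r+1}$ and $[g(a)]_{r+1}$ respectively, this immediately gives $E_{r+1}(f)=E_{r+1}(g)$. So I fix such an $a$, which by definition of $Z_{r+1}$ means $a\in F_pA^n$ and $da\in F_{p-r-1}A^{n+1}$.

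First I would use the defining identity $dh+hd=g-f$ of the $r$-homotopy to write \[g(a)-f(a)=dh(a)+h(da).\] Recalling that \[B_{r+1}^{p,n+p}(B)=Z_r^{p-1,n+p-1}(B)+dZ_r^{p+r,n+p+r-1}(B),\] the natural decomposition to aim for is $g(a)-f(a)=x+dy$ with $y:=h(a)$ and $x:=h(da)$. It then remains to verify the two cycle conditions $y\in Z_r^{p+r,n+p+r-1}(B)$ and $x\in Z_r^{p-1,n+p-1}(B)$, and both come out of tracking filtration levels against the filtration-shifting property $h(F_pA^n)\subseteq F_{p+r}B^{n-1}$ of $h$. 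For $y=h(a)$: the property of $h$ gives $h(a)\in F_{p+r}B^{n-1}$, while $dh(a)=(g-f)(a)-h(da)$ lies in $F_pB^n$, since $(g-f)(a)\in F_pB^n$ and $h(da)\in F_{p-1}B^n$; this is exactly membership in $Z_r^{p+r,n+p+r-1}(B)$. For $x=h(da)$: from $da\in F_{p-r-1}A^{n+1}$ the property of $h$ gives $h(da)\in F_{p-1}B^n$, while $dh(da)=(g-f)(da)$ (the term $h(d\,da)$ vanishing as $d^2=0$) lies in $F_{p-r-1}B^{n+1}$; this is exactly membership in $Z_r^{p-1,n+p-1}(B)$.

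The whole argument is elementary, and the only point demanding care is the index bookkeeping: one must correctly match the filtration degrees produced by the successive applications of $h$ and $d$ with the shifted indices $p-1$, $p+r$, $n+p-1$ and $n+p+r-1$ occurring in the definitions of $Z_r$ and $B_{r+1}$. I expect no genuine obstacle beyond this verification of indices, so the proof reduces to the computation sketched above.
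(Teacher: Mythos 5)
Your proof is correct: the decomposition $g(a)-f(a)=h(da)+dh(a)$ with $h(da)\in Z_r^{p-1,n+p-1}(B)$ and $h(a)\in Z_r^{p+r,n+p+r-1}(B)$ is exactly what is needed to land in $B_{r+1}^{p,n+p}(B)$, and your filtration bookkeeping (using $a\in F_pA^n$, $da\in F_{p-r-1}A^{n+1}$, and $h(F_qA^m)\subseteq F_{q+r}B^{m-1}$) checks out at every step. The paper itself gives no proof, citing Cartan--Eilenberg instead; your argument is precisely that classical direct verification, so there is nothing to compare beyond noting that you have filled in the cited computation correctly.
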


\subsection{Model category structures}\label{modelbFil}
Throughout this section we let $r\geq 0$ be an integer.
\begin{defi}
Let $I_r$ and $J_r$ be the sets of morphisms of $\fcpx$ given by 
\[I_r:=\left\{\Zz_{r+1}(p,n)\lra \Bb_{r+1}(p,n)\right\}_{p,n\in\ZZ}\text{ and }
J_r:=\left\{0\lra \Zz_{r}(p,n)\right\}_{p,n\in\ZZ}.\]
\end{defi}

\begin{prop}\label{J_r}A morphism of filtered complexes $f$ is $J_r$-injective 
if and only if $Z_r(f)$ is bidegree-wise surjective. 
\end{prop}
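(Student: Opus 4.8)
The plan is to unwind the definition of $J_r$-injectivity directly, using the representability of the cycles functor recorded in Lemma \ref{equivalencies_fil}(1). Recall that $J_r$ consists of the maps $0\lra \Zz_r(p,n)$ for $p,n\in\ZZ$, so a morphism $f$ is $J_r$-injective precisely when it has the right lifting property against every such generator.

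First I would observe that since the source of each generator is the zero object, a lifting square for $0\lra\Zz_r(p,n)$ against $f:A\to B$ degenerates: the top map $0\to A$ carries no data, the upper triangle in any lift commutes automatically, and the square is determined entirely by its bottom map $\Zz_r(p,n)\to B$. Thus the right lifting property amounts to the statement that every morphism $\Zz_r(p,n)\to B$ factors through $f$ via some morphism $\Zz_r(p,n)\to A$.

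Next I would translate this factorization condition into a statement about $r$-cycles. By Lemma \ref{equivalencies_fil}(1), a morphism $\Zz_r(p,n)\to B$ is the same datum as an element $b\in Z_r^{p,n+p}(B)$, and likewise a morphism $\Zz_r(p,n)\to A$ is an element $a\in Z_r^{p,n+p}(A)$; functoriality of this identification means that the factorization (bottom map) $=f\circ(\text{lift})$ corresponds exactly to the equation $Z_r(f)(a)=b$. Hence the right lifting property against the single generator $0\lra\Zz_r(p,n)$ holds if and only if every $b\in Z_r^{p,n+p}(B)$ is of the form $Z_r(f)(a)$ for some $a$, that is, if and only if $Z_r(f)$ is surjective in bidegree $(p,n+p)$.

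Finally, letting $p,n$ range over $\ZZ$ exhausts all bidegrees $(p,n+p)$, so the condition becomes surjectivity of $Z_r(f)$ in every bidegree, which is the assertion. I do not expect a genuine obstacle here: the argument is a clean translation across the representability statement, and the only point requiring care is matching the bidegree indexing between $\Zz_r(p,n)$ and $Z_r^{p,n+p}$ — which is precisely what Lemma \ref{equivalencies_fil}(1) encodes.
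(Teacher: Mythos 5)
Your proof is correct and is precisely the paper's argument spelled out in detail: the paper's proof reads ``It follows directly from (1) of Lemma \ref{equivalencies_fil},'' and your unwinding of the lifting property against the zero-source generators, combined with the natural identification of maps $\Zz_r(p,n)\to A$ with elements of $Z_r^{p,n+p}(A)$, is exactly that deduction. No gaps; the bidegree bookkeeping at the end is handled correctly.
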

\begin{proof}
It follows directly from (1) of  Lemma $\ref{equivalencies_fil}$.
\end{proof}

\begin{prop}\label{I_r} We have $\cl{I_r}{inj}= \Ee_r\cap \cl{J_r}{inj}$.
\end{prop}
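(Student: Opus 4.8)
The plan is to unravel $I_r$-injectivity using parts (3) and (4) of Lemma $\ref{equivalencies_fil}$ and then prove the two inclusions separately. Explicitly, $f\colon A\to B$ lies in $\cl{I_r}{inj}$ precisely when every triple $(a,b,c)$ with $a\in Z_{r+1}^{p,n+p}(A)$, $b\in Z_r^{p-1,n+p-1}(B)$, $c\in Z_r^{p+r,n+p+r-1}(B)$ and $f(a)=b+dc$ admits a pair $(b',c')$ with $b'\in Z_r^{p-1,n+p-1}(A)$, $c'\in Z_r^{p+r,n+p+r-1}(A)$, $a=b'+dc'$, $f(b')=b$ and $f(c')=c$. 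Recall that $f\in\Ee_r$ means $E_{r+1}(f)$ is an isomorphism, while $f\in\cl{J_r}{inj}$ means $Z_r(f)$ is bidegree-wise surjective by Proposition $\ref{J_r}$. So the two things to match are the lifting condition above and the pair of conditions ``$E_{r+1}(f)$ an isomorphism and $Z_r(f)$ surjective''.

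For $\cl{I_r}{inj}\subseteq\Ee_r\cap\cl{J_r}{inj}$, assume $f$ is $I_r$-injective. Applying Lemma $\ref{L:fpushout}$ with $r+1$ exhibits each $0\to\Zz_{r+1}(p+r,n-1)$ as a pushout of $\varphi_{r+1}\in I_r$, so $J_{r+1}\subseteq\cl{I_r}{cof}$ and hence $Z_{r+1}(f)$ is surjective by Proposition $\ref{J_r}$. For surjectivity of $Z_r(f)$, given $\gamma\in Z_r^{p+r,n+p+r-1}(B)$ note that $d\gamma\in Z_{r+1}^{p,n+p}(B)$; lifting it to $a\in Z_{r+1}^{p,n+p}(A)$ and feeding the triple $(a,0,\gamma)$ into the lifting property returns $c'$ with $f(c')=\gamma$. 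Injectivity of $E_{r+1}(f)$ is immediate: if $f(a)\in B_{r+1}(B)$ one writes $f(a)=b+dc$ as a triple and reads off from the lift that $a=b'+dc'\in B_{r+1}(A)$. Finally, since $Z_r(f)$ and $Z_{r+1}(f)$ are now both surjective, Lemma $\ref{L:ffund}$ makes $E_{r+1}(f)$ surjective, hence an isomorphism.

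For the reverse inclusion, assume $E_{r+1}(f)$ is an isomorphism and $Z_r(f)$ is surjective; Lemma $\ref{L:ffund}$ then also gives surjectivity of $Z_{r+1}(f)$. Given a triple $(a,b,c)$, injectivity of $E_{r+1}(f)$ together with $f(a)=b+dc\in B_{r+1}(B)$ yields a decomposition $a=\beta+d\gamma$ with $\beta\in Z_r^{p-1,n+p-1}(A)$ and $\gamma\in Z_r^{p+r,n+p+r-1}(A)$; the defect from the desired lift is measured by $w:=c-f(\gamma)\in Z_r^{p+r,n+p+r-1}(B)$, which satisfies $f(\beta)-b=dw$. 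The crux of the proof is the observation that this relation forces $dw\in F_{p-1}B^n$, so that $w$ actually lies one stage deeper, in $Z_{r+1}^{p+r,n+p+r-1}(B)$. Lifting $w$ through the surjection $Z_{r+1}(f)$ to some $\tilde w$ and setting $\gamma':=\gamma+\tilde w$, $\beta':=\beta-d\tilde w$ produces the required lift: $f(\gamma')=c$, $f(\beta')=b$ and $a=\beta'+d\gamma'$ all follow by direct computation. The main obstacle is exactly this last filtration bookkeeping: the correction $\tilde w$ needed to fix $f(\gamma')=c$ would in general push $\beta'$ into filtration $p$ and out of $Z_r^{p-1}$, and the argument only closes because $\tilde w\in Z_{r+1}$ guarantees $d\tilde w\in F_{p-1}A^n$, keeping $\beta'$ an $r$-cycle of the correct filtration weight.
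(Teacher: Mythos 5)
Your proof is correct and takes essentially the same approach as the paper's: the same pushout argument via Lemma \ref{L:fpushout} to obtain $J_{r+1}$-injectivity, the same lifting characterization from Lemma \ref{equivalencies_fil}, and, in the converse direction, the same key observation that the defect $c-f(\gamma)$ lies in $Z_{r+1}^{p+r,\ast}(B)$ and can therefore be corrected using the surjectivity of $Z_{r+1}(f)$ supplied by Lemma \ref{L:ffund}. The only cosmetic differences are that you unwind the paper's direct-sum lifting argument for surjectivity of $Z_r(f)$ into an element-wise one (the triple $(a,0,\gamma)$), and a sign convention in the correction term ($\tilde w$ versus the paper's $u=-\tilde w$).
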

\begin{proof}
 Assume first that $f:A\rightarrow B$ is $I_r$-injective. 
Lemma \ref{L:fpushout} and $(1)$ of Lemma \ref{equivalencies_fil} imply that $f$ is $J_{r+1}$-injective.
Consider the solid diagram
\[
\xymatrix{  &&A\ar[d]^{f} \\
\Zz_{r+1}(p,n)\ar[r]_-{\varphi_{r+1}}\ar@{.>}[urr]^-{\gamma} &\Bb_{r+1}(p,n)\ar@{.>}[ur]^<{\psi}\ar[r]_-{g}& B}
\]

Since $f$ is $J_{r+1}$-injective, there exists a lift $\gamma$ such that $f\gamma=g\varphi_{r+1}$. Since $f$ is 
$I_r$-injective, there exists $\psi$ such that $\psi\varphi_{r+1}=\gamma$ and $f\psi=g$. Hence by the first statement of Lemma $\ref{L:fpushout}$,
$f$ is $J_r$-injective.
Since $Z_{r+1}(f)$ is bidegree-wise surjective, so is $E_{r+1}(f)$. Let us prove that $E_{r+1}(f)$ is injective. Let $a\in Z_{r+1}(A)$ such that $[f(a)]=[0]$, that is, there exist $b,c\in Z_{r}(B)$ such that $f(a)=b+dc$. This corresponds to the solid commutative diagram (D)

\[
 \xymatrix{
 \Zz_{r+1}(p,n)\ar[d]_{\varphi_{r+1}}\ar[r]^-{a}&A\ar[d]^f\\
 \Bb_{r+1}(p,n)\ar[r]_-{b+dc}\ar@{.>}[ur]^{b'+dc'}&B
 }
\]
which admits a lift since $f$ is $I_r$-injective. That is, there exists $b',c'$ such that $a=b'+dc'\in B_{r+1}(A)$ hence $[a]=[0]\in E_{r+1}(A)$.

Conversely, assume  $f\in \Ee_r\cap  \cl{J_r}{inj}$ and consider the solid diagram $(D)$ which amounts
to consider elements $a\in Z_{r+1}^{p,*}(A), b+dc\in B_{r+1}^{p,*}(B)$ such that $f(a)=b+dc$. 
This gives $E_{r+1}(f)([a])=[0]$ and the injectivity of $E_{r+1}(f)$ implies $a=b'+dc'$ for some $b'\in Z_{r}^{p-1,*}(A)$ and $c'\in Z_r^{p+r,*}(A)$.
Applying $f$ one gets the equation
\[b-f(b')=d(f(c')-c).\]
Note that $f(c')-c\in F_{p+r}B^{n-1}$ and the equation tells us that $d(f(c')-c)\in F_{p-1}B^n$. Therefore we have
$f(c')-c\in Z_{r+1}^{p+r,*}(B)$ . By Lemma \ref{L:ffund}, $Z_{r+1}(f)$ is bidegree-wise surjective, so there exists
$u\in Z_{r+1}^{p+r,*}(A)$ so that $f(c')-c=f(u)$. Note that $du\in F_{p-1}A^n$ and that $b-f(b')=f(du)$.
In conclusion setting $\beta=b'+du\in Z_{r}^{p-1,n}(A)$ and $\gamma=c'-u\in Z_r^{p+r,*}(A)$ one gets $a=\beta+d\gamma$ and $f(\beta)=b, f(\gamma)=c$.
Finally, $\beta+d\gamma$ is the desired lift in the diagram.
\end{proof}

\begin{prop}\label{Jrcofquis}For all $r\geq 0$ and all $0\leq k\leq r$ we have $\cl{J_k}{cof}\subseteq \Ee_r$.
\end{prop}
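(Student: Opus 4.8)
The plan is to combine a direct computation of the spectral sequence of the generators of $J_k$ with the standard description of $\cl{J_k}{cof}$ coming from the small object argument. Since the domains of $I$ and $J$ are compact relative to the respective cell complexes (as recorded after Theorem \ref{HoveyTheorem}), every map in $\cl{J_k}{cof}$ is a retract of a relative $J_k$-cell complex. As $\Ee_r$ is closed under retracts, it therefore suffices to show that each relative $J_k$-cell complex lies in $\Ee_r$ whenever $k\le r$.

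The first step is to prove that $\Zz_k(p,n)$ is $E_r$-acyclic for all $r\ge k$, so that each generator $0\to\Zz_k(p,n)$ of $J_k$ belongs to $\Ee_r$. This is a direct inspection of the spectral sequence of the two-term complex $\Zz_k(p,n)=\left(\kk_{(p)}^{n}\stackrel{1}{\lra}\kk_{(p-k)}^{n+1}\right)$. Writing bidegrees as (filtration weight, cohomological degree), its associated graded has precisely two nonzero pieces, both equal to $\kk$, located at $(p,n)$ and $(p-k,n+1)$. For $k\ge 1$ these sit in distinct weights, so the differentials $\delta_0,\dots,\delta_{k-1}$ all vanish and $E_0(\Zz_k(p,n))=\cdots=E_k(\Zz_k(p,n))$; since the only differential of $\Zz_k(p,n)$ is the identity, $\delta_k$ is an isomorphism between the two surviving classes. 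Hence $E_k(\Zz_k(p,n))$ is acyclic and $E_{k+1}(\Zz_k(p,n))=0$, which gives acyclicity of $E_r(\Zz_k(p,n))$ for all $r\ge k$. The case $k=0$ is immediate, since there $\delta_0$ is already an isomorphism and $E_0(\Zz_0(p,n))$ is acyclic.

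The second step uses that all maps in $J_k$ have domain the zero object. A pushout of a generator $0\to\Zz_k(p,n)$ along $0\to X$ is then just the inclusion $X\to X\oplus\Zz_k(p,n)$ of a direct summand, and consequently any relative $J_k$-cell complex with source $A$ is an inclusion $A\to A\oplus W$, where $W=\bigoplus_\alpha\Zz_k(p_\alpha,n_\alpha)$ is a (possibly transfinite) direct sum of cells. The functors $Z_r$ and $B_r$ are assembled from kernels, preimages, intersections, sums and images, all of which commute with direct sums in $\fcpx$; hence $E_r$ commutes with direct sums. Since cohomology of $r$-bigraded complexes also commutes with direct sums, $E_r(W)=\bigoplus_\alpha E_r(\Zz_k(p_\alpha,n_\alpha))$ is acyclic by the first step, and $E_r(A\to A\oplus W)$ is the split injection $E_r(A)\to E_r(A)\oplus E_r(W)$, which is a quasi-isomorphism of $r$-bigraded complexes. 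Thus every relative $J_k$-cell complex is an $E_r$-quasi-isomorphism, and closure of $\Ee_r$ under retracts completes the argument.

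I expect the main obstacle to be the bookkeeping in the spectral-sequence computation of the first step, namely pinning down the bidegrees of the two surviving classes and checking that $\delta_k$, and no earlier differential, realizes the isomorphism between them. The reduction to cell complexes and the commutation of $E_r$ with direct sums are routine, the only point needing care being that the sequential colimits defining $J_k$-cell complexes are genuinely computed as direct sums in $\fcpx$.
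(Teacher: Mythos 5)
Your proof is correct, but it takes a genuinely different route from the paper's. You use the classical cellular argument: by the small object argument (the domains of $J_k$ are zero, hence trivially compact), every $J_k$-cofibration is a retract of a relative $J_k$-cell complex; because the generators of $J_k$ have zero source, such a cell complex is simply a direct sum inclusion $A\to A\oplus\bigoplus_\alpha \Zz_k(p_\alpha,n_\alpha)$; you then compute $E_{k+1}(\Zz_k(p,n))=0$ (your bidegree bookkeeping is right: the two classes of the associated graded sit in weights $p$ and $p-k$, so $\delta_0,\dots,\delta_{k-1}$ vanish and $\delta_k$ is the identity), observe that $E_r$ commutes with direct sums, and finish by retract-closure of $\Ee_r$. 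The paper instead uses a trick it attributes to Fausk that avoids cell complexes entirely: given a $J_k$-cofibration $f:A\to B$, it lifts $f$ against the single map $(f,\pi_1):A\oplus\Mm_r(B)\to B$, where $\Mm_r(B)=T_r^{-1}C_r(1_B)$ is the shifted cone of the identity, which is $J_k$-injective (since $Z_k(\pi_1)$ is surjective) and satisfies $E_{r+1}(\Mm_r(B))=0$; applying $E_{r+1}$ to the resulting lift exhibits $E_{r+1}(f)$ as an isomorphism. The trade-off is this: the paper's argument needs only finite direct sums and one explicit acyclic object, so it is insensitive to the fact that $\fcpx$ is only claimed to have finite limits and sequential colimits, and it transfers verbatim to bicomplexes (Proposition \ref{Jr-cof}); your argument leans on the small object argument, on transfinite compositions being honest direct sums, and on $E_r$ commuting with arbitrary coproducts --- all true here, and implicitly part of the infrastructure behind Theorem \ref{HoveyTheorem}, but exactly the colimit machinery the paper's proof is designed to sidestep. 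In exchange, your route makes the structure of $J_k$-cell complexes and the acyclicity of the generators $\Zz_k(p,n)$ explicit, which the paper never records in the filtered setting.
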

\begin{proof}We prove this by borrowing a technique used in \cite{Fausk}.
Let $f:A\to B$ be a $J_k$-cofibration. By Proposition $\ref{J_r}$ this means that $f$ has the
left lifting property with respect to maps $g$ such that $Z_k(g)$ is surjective.
Consider the filtered complex $\Mm_r(B)=T_r^{-1}C_r(1_B)$ of Notation $\ref{mmr}$ and consider the diagram
\[\xymatrix{ A\ar[r]^-{\vmat{id}{0}}\ar[d]_f & A\oplus \Mm_r(B)\ar[d]^{(f,\pi_1)}\\ B\ar[r]^{=} & B}\]
Since $Z_k(\pi_1)$ is surjective, it follows that $Z_k(f,\pi_1)$ is also surjective,
and so
a lift $h:B\rightarrow A\oplus \Mm_r(B)$ exists in this diagram.
Since $E_r(\Mm_r(B))$ is acyclic, applying $E_{r+1}$ to the diagram we get that $f\in \Ee_r$.
\end{proof}

\begin{teo}\label{modelrfilcomplexB}For every $r\geq 0$, 
the category $\fcpx$ admits a right proper cofibrantly generated model structure, where: 
\begin{enumerate}[(1)]
\item weak equivalences are $E_r$-quasi-isomorphisms,
\item fibrations are morphisms of filtered complexes $f:A\to B$ 
such that $Z_r(f)$ is bidegree-wise surjective, and
\item $I_r$ and $J_r$ are the sets of generating cofibrations and generating trivial cofibrations respectively.
\end{enumerate}
\end{teo}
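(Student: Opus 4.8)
The plan is to obtain the model structure as a direct application of Kan's recognition theorem (Theorem \ref{HoveyTheorem}), taking $\Ww=\Ee_r$, $I=I_r$ and $J=J_r$. Almost all of the hypotheses have already been secured, so the argument is essentially an assembly of the preceding results. Condition (1) holds because $\Ee_r$ satisfies the two-out-of-three property and is closed under retracts, as recorded after the definition of $E_r$-quasi-isomorphism. Conditions (2) and (3), the compactness of the domains of $I_r$ and $J_r$ relative to the respective cell complexes, hold by the discussion following Theorem \ref{HoveyTheorem}: the domains $\Zz_{r+1}(p,n)$ are filtered complexes concentrated in two degrees with pure filtration pieces, and the domains of $J_r$ are the zero object, so both are compact in the sense of Di Natale.

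It then remains to check conditions (4) and (5). Condition (4), that $\cl{J_r}{cof}\subseteq \Ee_r$, is exactly the case $k=r$ of Proposition \ref{Jrcofquis}. Condition (5), the equality $\cl{I_r}{inj}=\Ee_r\cap\cl{J_r}{inj}$, is precisely the content of Proposition \ref{I_r}. With all five hypotheses in place, Theorem \ref{HoveyTheorem} produces a cofibrantly generated model structure on $\fcpx$ whose weak equivalences are the $E_r$-quasi-isomorphisms and whose sets of generating cofibrations and generating trivial cofibrations are $I_r$ and $J_r$, giving assertions (1) and (3). For assertion (2), recall that in the resulting structure the fibrations are the $J_r$-injective maps; by Proposition \ref{J_r} these are exactly the morphisms $f$ with $Z_r(f)$ bidegree-wise surjective.

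Finally, to establish right properness I would observe that every object of $\fcpx$ is fibrant in this structure. Indeed, for any filtered complex $A$ the map $A\to 0$ induces $Z_r(A\to 0)\colon Z_r(A)\to Z_r(0)=0$, which is trivially bidegree-wise surjective, so $A\to 0$ is a fibration by Proposition \ref{J_r}. Since every object is fibrant, the model structure is automatically right proper (see \cite[Corollary 13.1.3]{Hir}).

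I do not expect a genuine obstacle at the level of this theorem itself: the substantive work has been done in the earlier results, above all in Proposition \ref{I_r}, where the lifting arguments (relying on Lemma \ref{L:ffund}) identify the $I_r$-injective maps. The only point in the present proof demanding a moment's care is the translation of the abstract lifting conditions into the concrete descriptions, namely that fibrations are detected by surjectivity of $Z_r$ rather than directly by the $E_r$-term, which is exactly what Proposition \ref{J_r} provides.
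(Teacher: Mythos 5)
Your proposal is correct and follows essentially the same route as the paper's own proof: both apply Kan's recognition theorem with conditions (4) and (5) supplied by Propositions \ref{Jrcofquis} (case $k=r$) and \ref{I_r}, identify the fibrations via Proposition \ref{J_r}, and deduce right properness from the fact that all objects are fibrant, citing \cite{Hir} 13.1.3. Your additional explicit verifications (compactness of the domains, fibrancy of every object) are points the paper disposes of in the discussion following Theorem \ref{HoveyTheorem} or leaves implicit.
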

\begin{proof}By Proposition $\ref{J_r}$, 
the class $\Ff ib_r$ of $r$-fibrations
is given by those morphisms $f$ such that $Z_r(f)$ is bidegree-wise surjective.
By Theorem $\ref{HoveyTheorem}$ it suffices 
to check that 
$\Ee_r\cap \cl{J_r}{inj}= \cl{I_r}{inj}$ and that $\cl{J_r}{cof}\subseteq \Ee_r.$
These follow from Propositions $\ref{I_r}$ and $\ref{Jrcofquis}$ (for the case $k=r$) respectively.  By \cite{Hir} 13.1.3 right properness follows directly from the fact that all objects are fibrant.
\end{proof}

In certain situations, it may be more practical to characterize fibrations via the surjectivity of $E_r$ instead of $Z_r$.

\begin{defi}
Let $I'_r$ and $J_r'$ be the sets of morphisms of $\fcpx$ given by 
\[I_r':=\cup_{k=0}^{r-1} J_k\cup I_r\text{ and }
J_r':=\cup_{k=0}^r J_k.\]
\end{defi}

We have:

\begin{teo}\label{modelrfilcomplexC}For every $r\geq 0$, 
the category $\fcpx$ admits a right proper cofibrantly generated model structure, where: 
\begin{enumerate}[(1)]
\item weak equivalences are $E_r$-quasi-isomorphisms,
\item fibrations are morphisms of filtered complexes $f:A\to B$ 
such that $Z_0(f)$ is bidegree-wise surjective and $E_i(f)$ is bidegree-wise surjective for all $i\leq r$, and
\item $I'_r$ and $J'_r$ are the sets of generating cofibrations and generating trivial cofibrations respectively.
\end{enumerate}
\end{teo}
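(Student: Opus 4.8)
The plan is to apply Kan's recognition theorem (Theorem \ref{HoveyTheorem}) with weak equivalences $\Ww=\Ee_r$, generating cofibrations $I_r'$ and generating trivial cofibrations $J_r'$. Condition (1) is the standing fact that $\Ee_r$ satisfies two-out-of-three and is retract-closed, and conditions (2),(3) hold because the domains of the maps in $I_r'$ and $J_r'$ are again only $0$ and the complexes $\Zz_{r+1}(p,n)$, whose compactness was already used for Theorem \ref{modelrfilcomplexB}. So only conditions (4) and (5), together with the identification of the fibrations, require work.

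First I would unwind the injective classes. Since $J_r'=\bigcup_{k=0}^r J_k$ and $I_r'=\bigcup_{k=0}^{r-1}J_k\cup I_r$, a map is $J_r'$-injective exactly when it is $J_k$-injective for every $k\le r$, and $I_r'$-injective exactly when it is $J_k$-injective for every $k<r$ and $I_r$-injective. Proposition \ref{J_r} identifies $\cl{J_k}{inj}$ with the maps $f$ for which $Z_k(f)$ is bidegree-wise surjective, and Proposition \ref{I_r} gives $\cl{I_r}{inj}=\Ee_r\cap\cl{J_r}{inj}$. Combining these,
\[\cl{I_r'}{inj}=\Big(\bigcap_{k=0}^{r-1}\cl{J_k}{inj}\Big)\cap\Ee_r\cap\cl{J_r}{inj}=\Ee_r\cap\bigcap_{k=0}^r\cl{J_k}{inj}=\Ee_r\cap\cl{J_r'}{inj},\]
which is precisely condition (5). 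The same computation shows that the fibrations, i.e.\ the $J_r'$-injective maps, are the $f$ with $Z_k(f)$ surjective for all $0\le k\le r$; a straightforward induction using Lemma \ref{L:ffund} rewrites this as the stated condition that $Z_0(f)$ and $E_i(f)$ be surjective for all $0\le i\le r$ (the case $i=0$ being automatic from $Z_0(f)$).

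For condition (4), $\cl{J_r'}{cof}\subseteq\Ee_r$, I would re-run the argument of Proposition \ref{Jrcofquis}. Let $f:A\to B$ lie in $\cl{J_r'}{cof}$, so that $f$ has the left lifting property against every $J_r'$-injective map. The crucial input is Notation \ref{mmr}: the projection $\pi_1:\Mm_r(B)\to B$ induces a bidegree-wise surjection $Z_k(\pi_1)$ for \emph{all} $0\le k\le r$ simultaneously, so $(f,\pi_1):A\oplus\Mm_r(B)\to B$ is $J_r'$-injective. A lift in the same square as in the proof of Proposition \ref{Jrcofquis} therefore exists, and since $E_r(\Mm_r(B))$ is acyclic (hence $E_{r+1}(\Mm_r(B))=0$), applying $E_{r+1}$ to the lifting data exhibits a two-sided inverse for $E_{r+1}(f)$, so $f\in\Ee_r$. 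Finally all objects are fibrant, since $A\to 0$ trivially has $Z_k$ surjective for every $k$, so right properness follows from \cite{Hir} 13.1.3.

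The only genuinely new point compared to Theorem \ref{modelrfilcomplexB} is condition (4): one cannot simply quote Proposition \ref{Jrcofquis}, because $\cl{J_r'}{cof}$ is strictly larger than any single $\cl{J_k}{cof}$ (passing to the union $J_r'$ shrinks the class of injectives and hence enlarges the cofibrations). The argument survives only because a \emph{single} auxiliary object $\Mm_r(B)$ has a projection that is $Z_k$-surjective for all $k\le r$ at once, which is exactly why Proposition \ref{Jrcofquis} and Notation \ref{mmr} were arranged to cover the whole range $0\le k\le r$.
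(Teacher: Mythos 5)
Your proposal is correct and follows essentially the same route as the paper: Kan's recognition theorem with $I_r'$ and $J_r'$, the identification of the $J_r'$-injectives via Lemma \ref{L:ffund} and Lemma \ref{equivalencies_fil}, the same chain of equalities $\cl{I'_r}{inj}=\Ee_r\cap\cl{J'_r}{inj}$ from Proposition \ref{I_r}, and right properness from fibrancy of all objects. In fact your treatment of condition (4) is slightly more careful than the paper's, which simply cites Proposition \ref{Jrcofquis} even though $\cl{J'_r}{cof}$ may strictly contain each $\cl{J_k}{cof}$; the paper's citation implicitly rests on exactly the observation you spell out, namely that the single object $\Mm_r(B)$ of Notation \ref{mmr} has $Z_k(\pi_1)$ surjective for all $0\le k\le r$ simultaneously, so the lifting argument applies verbatim to $J_r'$-cofibrations.
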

\begin{proof}
By Lemma $\ref{L:ffund}$ and (2) of Lemma \ref{equivalencies_fil}  we have that a map is $J_r'$-injective if and only if $Z_0(f)$ is bidegree-wise surjective
and $E_i(f)$ is bidegree-wise surjective for all $i\leq r$.
By Theorem \ref{HoveyTheorem} it suffices to show that 
\[\cl{J'_r}{cof}\subseteq \Ee_r\text{ and }\cl{I'_r}{inj}= \Ee_r\cap \cl{J'_r}{inj}.\]
This follows from Propositions $\ref{I_r}$ and $\ref{Jrcofquis}$ together with the following comparison of sets:
\[ \cl{I'_r}{inj}=\cl{I_r}{inj}\cap\bigcap_{k=0}^{r-1} \cl{J_k}{inj}=\Ee_r\cap\bigcap_{k=0}^{r} \cl{J_k}{inj}=\Ee_r\cap \cl{J'_r}{inj}.\]
Right properness follows directly from the fact that all objects are fibrant.
\end{proof}

\subsection{Comparison of model structures via shift and d\'{e}calage}

\begin{defi}\label{shift_defi}Let $r\geq 0$. The
\textit{$r$-shift} of a filtered complex $(A,d,F)$ is the filtered complex $(A,d,S^rF)$ defined by
\[S^rF_pA^n:=F_{p+rn}A^n.\]
This defines a functor 
$S^r:\fcpx\lra \fcpx$
which is the identity on morphisms.
\end{defi}
Note that $S^0=1$ and that $S^r=S^1\circ \stackrel{(r)}{\cdots}\circ S^1$.
The $r$-shift functor has a right adjoint, called the d\'{e}calage, which was first introduced by Deligne in \cite{DeHII}.

\begin{defi}\label{decalage_defi}Let $r\geq 0$.
The \textit{$r$-d\'{e}calage} of a filtered complex $(A,d,F)$ is the filtered complex $(A,d,\Dec^r F)$ given by
\[\Dec^r F_pA^n:=F_{p-rn}A^n\cap d^{-1}(F_{p-r(n+1)}A^{n+1})= 
Z_r^{p-rn,p-rn+n}(A).\]
This defines a functor 
$\Dec^r:\fcpx\to \fcpx$
which is the identity on morphisms.
\end{defi}
Note that $\Dec^0=1$ and that $\Dec^r=\Dec^1\circ \stackrel{(r)}{\cdots}\circ \Dec^1$.
The following is easily verified.
\begin{lem}[\cite{CG2}]\label{adjunctionDecS}We have
 $\Dec^r\circ S^r=1$ and $(S^r \Dec^r F)_p=F_p\cap d^{-1}(F_{p-r})$.
In particular, there is a natural transformation
$S^r\circ \Dec^r\to 1$ and $S^r$ is left adjoint to $\Dec^r$:
$$\Hom(S^rA,B)=\Hom(A,\Dec^r B).$$
\end{lem}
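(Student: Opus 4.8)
The plan is to verify all four assertions by direct computation from the definitions of $S^r$ and $\Dec^r$ in Definitions \ref{shift_defi} and \ref{decalage_defi}; the only non-formal input is the filtered complex axiom $d(F_pA^n)\subseteq F_pA^{n+1}$. First I would establish $\Dec^r\circ S^r=1$. Unwinding the definitions, $(\Dec^r S^rF)_pA^n=(S^rF)_{p-rn}A^n\cap d^{-1}\bigl((S^rF)_{p-r(n+1)}A^{n+1}\bigr)$, and since $(S^rF)_{p-rn}A^n=F_{(p-rn)+rn}A^n=F_pA^n$ and likewise $(S^rF)_{p-r(n+1)}A^{n+1}=F_pA^{n+1}$, this reads $F_pA^n\cap d^{-1}(F_pA^{n+1})$. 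The intersection collapses to $F_pA^n$ precisely because $d$ respects the filtration, giving $F_pA^n\subseteq d^{-1}(F_pA^{n+1})$. The analogous substitution yields $(S^r\Dec^rF)_pA^n=(\Dec^rF)_{p+rn}A^n=F_pA^n\cap d^{-1}(F_{p-r}A^{n+1})$, which is the claimed formula $F_p\cap d^{-1}(F_{p-r})$; since this submodule is contained in $F_pA^n$, the identity on the underlying complex is filtration-compatible and provides the natural transformation $S^r\circ\Dec^r\to 1$.

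For the adjunction, the key observation is that both $S^r$ and $\Dec^r$ act as the identity on underlying complexes and on morphisms, so a morphism on either side of $\Hom(S^rA,B)=\Hom(A,\Dec^rB)$ is the \emph{same} underlying chain map $f\colon A\to B$; only the filtration-compatibility requirement differs, and the task is to show the two requirements coincide. A morphism $S^rA\to B$ asks that $f(F_{p+rn}A^n)\subseteq F_pB^n$ for all $p,n$, equivalently $f(F_qA^n)\subseteq F_{q-rn}B^n$ after setting $q=p+rn$. A morphism $A\to\Dec^rB$ asks that $f(F_pA^n)\subseteq F_{p-rn}B^n\cap d^{-1}(F_{p-r(n+1)}B^{n+1})$. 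The first part of the décalage condition is literally the shift condition, so I would only need to check that the $d^{-1}$-part comes for free.

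The one step requiring a genuine argument, and the only place where anything beyond bookkeeping happens, is exactly this: given $f(F_pA^n)\subseteq F_{p-rn}B^n$ for all $p,n$, one computes $df(F_pA^n)=fd(F_pA^n)\subseteq f(F_pA^{n+1})\subseteq F_{p-r(n+1)}B^{n+1}$, using that $f$ is a chain map, that $d(F_pA^n)\subseteq F_pA^{n+1}$, and the shift condition applied in degree $n+1$. Hence $f(F_pA^n)\subseteq d^{-1}(F_{p-r(n+1)}B^{n+1})$ automatically, so the two compatibility conditions are equivalent and the advertised bijection of Hom-sets follows. Naturality in $A$ and $B$ is immediate, since every comparison map involved is the identity on the underlying bigraded data and the verifications above are uniform in $p,n$.
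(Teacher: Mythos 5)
Your proof is correct and is precisely the direct verification that the paper leaves to the reader (the lemma is stated as easily verified, with a citation to \cite{CG2}): the substitutions $(\Dec^r S^r F)_p = F_p\cap d^{-1}(F_p)=F_p$ and $(S^r\Dec^r F)_p = F_p\cap d^{-1}(F_{p-r})$ are exactly right, and you correctly isolate the only non-routine point, namely that for a chain map the $d^{-1}$-half of the d\'ecalage compatibility condition follows automatically from the shift condition via $df(F_pA^n)=fd(F_pA^n)\subseteq f(F_pA^{n+1})\subseteq F_{p-r(n+1)}B^{n+1}$. Nothing is missing.
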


The functors shift and d\'{e}calage allow us to compare weak equivalences as follows.
\begin{lem}\label{L:decEe}For all $k\geq 0$ we have 
$(S^r)^{-1}(\Ee_{k+r})=\Ee_{k}$
and 
$\Ee_{k+r}=(\Dec^r)^{-1}(\Ee_{k})$.
\end{lem}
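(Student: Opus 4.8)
The plan is to prove the two claimed identities by unwinding the definitions of the shift and décalage functors and relating the spectral sequences of $A$ and $S^rA$ (resp. $A$ and $\Dec^r A$) via a shift of indexing. The key computational input is a comparison of the $E$-terms: I expect that for any filtered complex $A$ one has a natural identification
\[
E_{k+r}^{p,q}(A)\cong E_k^{p',q'}(S^r A)
\]
for a suitable reindexing $(p',q')$ depending linearly on $(p,q,r)$, and dually for $\Dec^r$. Concretely, I would first compute $Z_k^{p,q}(S^r A)$ directly from Definition~\ref{shift_defi}: since $S^rF_pA^n=F_{p+rn}A^n$, the condition $a\in S^rF_pA^n\cap d^{-1}(S^rF_{p-k}A^{n+1})$ becomes $a\in F_{p+rn}A^n\cap d^{-1}(F_{p-k+r(n+1)}A^{n+1})=F_{p+rn}A^n\cap d^{-1}(F_{(p+rn)-(k+r-r+\dots)}A^{n+1})$; tracking the filtration indices carefully should show this equals $Z_{k+r}^{p+rn,\,*}(A)$ up to the reindexing. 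The analogous computation for $B_k$ would then give the matching of boundaries, and hence of $E_k(S^rA)$ with $E_{k+r}(A)$.

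Granting such a natural bigraded-complex isomorphism $E_k(S^r A)\cong E_{k+r}(A)$ (compatible with differentials $\delta_k$ and $\delta_{k+r}$ under the reindexing), the first identity follows immediately: a morphism $f$ satisfies $f\in(S^r)^{-1}(\Ee_{k+r})$ iff $S^rf\in\Ee_{k+r}$, which by definition means $E_{k+r}(S^rf)$ is a quasi-isomorphism of $(k+r)$-bigraded complexes. But $S^r$ is the identity on morphisms, and under the isomorphism above $E_{k+r}(S^rf)$ is carried to $E_k(f)$ (reindexed), so this holds iff $E_k(f)$ is a quasi-isomorphism, i.e. $f\in\Ee_k$. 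Since a shift of bidegree does not affect acyclicity, the quasi-isomorphism condition is preserved.

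For the second identity $\Ee_{k+r}=(\Dec^r)^{-1}(\Ee_k)$, I would proceed in the same spirit using Definition~\ref{decalage_defi}, where the key formula $\Dec^r F_pA^n=Z_r^{p-rn,\,*}(A)$ is already recorded. Here I expect the relation $E_k(\Dec^r A)\cong E_{k+r}(A)$, again via an explicit reindexing, so that $\Dec^r f\in\Ee_k$ iff $f\in\Ee_{k+r}$. Alternatively, and more cleanly, I could deduce the décalage statement from the shift statement using the adjunction relations of Lemma~\ref{adjunctionDecS}: from $\Dec^r\circ S^r=1$ one gets $(\Dec^r)^{-1}(\Ee_k)\supseteq S^r(\Ee_k)$-type containments, and combining $(S^r)^{-1}(\Ee_{k+r})=\Ee_k$ with the natural transformation $S^r\Dec^r\to 1$ should pin down the equality. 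I would likely present both identities in parallel via the two reindexing computations, as this is the most transparent route.

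The main obstacle will be bookkeeping the reindexing of bidegrees correctly and verifying that the isomorphisms of $E$-terms are genuinely compatible with the differentials $\delta_k$ and $\delta_{k+r}$, so that "quasi-isomorphism of $k$-bigraded complexes" on one side matches "quasi-isomorphism of $(k+r)$-bigraded complexes" on the other. The conceptual content—that shift and décalage shift the spectral sequence by $r$ pages—is classical (going back to Deligne), so the difficulty is purely in making the index shift on cycles, boundaries and the resulting $E$-pages precise and checking acyclicity is preserved under the bidegree translation; no deep idea beyond careful unwinding of the definitions should be required.
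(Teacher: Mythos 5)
Your overall strategy --- establish page-shifting isomorphisms of bigraded complexes and transport the quasi-isomorphism condition across them --- is exactly the paper's (the paper computes the shift case directly for $S^1$ and iterates, and cites Deligne for d\'{e}calage). However, two of your key claims are wrong as stated. First, the shift relation is backwards: completing your own computation, $S^rF_{p-k}A^{n+1}=F_{p-k+r(n+1)}A^{n+1}=F_{(p+rn)-(k-r)}A^{n+1}$, so $Z_k^{p,n+p}(S^rA)=Z_{k-r}^{p+rn,\,n+p+rn}(A)$, \emph{not} $Z_{k+r}^{p+rn,*}(A)$. The correct identification is therefore $E_{k+r}(S^rA)\cong E_k(A)$ (this is the paper's $E_{k+1}^{p,p+n}(S^1A)=E_k^{p+n,p+2n}(A)$, iterated), not your displayed $E_{k+r}(A)\cong E_k(S^rA)$. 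Your application paragraph silently uses the correct direction (``$E_{k+r}(S^rf)$ is carried to $E_k(f)$''), so the proposal is internally inconsistent; had you used the displayed formula, the same argument would yield $(S^r)^{-1}(\Ee_{k+r})=\Ee_{k+2r}$, which is not the lemma.

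Second, for d\'{e}calage the expected isomorphism $E_k(\Dec^rA)\cong E_{k+r}(A)$ is false at $k=0$: in general $E_0(\Dec A)\to E_1(A)$ is only a natural quasi-isomorphism, not an isomorphism. For instance, if $A=(\kk\xrightarrow{1}\kk)$ in degrees $0,1$ with $A$ pure of weight $0$, then $E_1(A)=0$ while $E_0(\Dec A)\neq 0$ (both modules become pure of weight $1$ in $\Dec F$, giving a nonzero acyclic $E_0$-page). This is precisely why the paper invokes Deligne's Proposition 1.3.4 of \cite{DeHII}, which gives isomorphisms $E_{k+1}(\Dec A)\cong E_{k+2}(A)$ only from page $1$ of $\Dec A$ onward; the lemma still holds at $k=0$ because $f\in\Ee_j$ is equivalent to $E_{j+1}(f)$ being an isomorphism, so membership in $\Ee_0$ is detected on page $1$, where the isomorphism is available. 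Relatedly, this comparison is a genuine theorem rather than the ``careful unwinding'' you anticipate: unlike the shift case, it does not follow from a reindexing of cycles and boundaries, which is why the paper cites it instead of recomputing. Finally, your fallback route via the adjunction does not close: $\Dec^rS^r=1$ together with the shift statement gives $\Dec^rf\in\Ee_k\Leftrightarrow S^r\Dec^rf\in\Ee_{k+r}$, and to conclude you would need the counit $S^r\Dec^rA\to A$ to lie in $\Ee_{k+r}$ --- but that assertion is essentially the d\'{e}calage statement itself (the paper proves it in Theorem \ref{model_decs} \emph{using} this lemma), so as sketched this route is circular.
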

\begin{proof}
By definition of the shift it follows that 
$E_{k+1}^{p,p+n}(S^1A)=E_k^{p+n,p+2n}(A)$ 
for all $k\geq 0$.
By Proposition 1.3.4 of \cite{DeHII},
the canonical map
$E_{k+1}^{p,p+n}(\Dec^1 A)
\to E_{k+2}^{p-n,p}(A)$
is an isomorphism for all $k\geq 0$. 
These give isomorphisms of bigraded complexes.
\end{proof}

\begin{teo}\label{model_decs}
For all $l,r\geq 0$ we have a Quillen equivalence
$$S^l:(\fcpx,I_r,J_r,\Ee_r)\rightleftarrows (\fcpx,I_{l+r},J_{l+r},\Ee_{l+r}):\Dec^l.$$
\end{teo}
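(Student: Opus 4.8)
The plan is to exploit the fact that both model structures are cofibrantly generated and that, as recorded in Lemma \ref{adjunctionDecS}, $S^l$ is a left adjoint with right adjoint $\Dec^l$. Since a left adjoint preserves all colimits, to show that $(S^l,\Dec^l)$ is a Quillen adjunction it suffices to verify that $S^l$ carries $I_r$ into $\cl{I_{l+r}}{cof}$ and $J_r$ into $\cl{J_{l+r}}{cof}$. I will in fact prove the stronger statement that $S^l$ identifies the two generating sets, i.e.\ $S^l(I_r)=I_{l+r}$ and $S^l(J_r)=J_{l+r}$, from which the Quillen adjunction is immediate.

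The key is a direct computation with the shift formula $S^lF_qA^m=F_{q+lm}A^m$ of Definition \ref{shift_defi}. Applying it degreewise to the generator $\Zz_r(p,n)$, whose degree-$n$ piece is pure of weight $p$ and whose degree-$(n+1)$ piece is pure of weight $p-r$, yields
\[
S^l\Zz_r(p,n)=\Zz_{l+r}(p-ln,n).
\]
Using the splitting $\Bb_{r+1}(p,n)=\Zz_r(p+r,n-1)\oplus \Zz_r(p-1,n)$ of Lemma \ref{L:fpushout} together with the fact that $S^l$ preserves coproducts, one then obtains $S^l\Bb_{r+1}(p,n)=\Bb_{l+r+1}(p-ln,n)$; and since $S^l$ is the identity on underlying morphisms, $S^l\varphi_{r+1}(p,n)=\varphi_{l+r+1}(p-ln,n)$. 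The reindexing $(p,n)\mapsto(p-ln,n)$ is a bijection of $\ZZ^2$, so as $(p,n)$ ranges over $\ZZ^2$ the images exhaust the generators at level $l+r$. This gives $S^l(J_r)=J_{l+r}$ and $S^l(I_r)=I_{l+r}$, whence $S^l$ is a left Quillen functor.

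It remains to upgrade the Quillen adjunction to a Quillen equivalence, i.e.\ to check that for every cofibrant $X$ and fibrant $Y$ a map $f:S^lX\to Y$ lies in $\Ee_{l+r}$ if and only if its adjunct $f^\flat:X\to \Dec^lY$ lies in $\Ee_r$. The adjunction bijection $\Hom(S^lA,B)=\Hom(A,\Dec^lB)$ of Lemma \ref{adjunctionDecS} is the identity on underlying maps, and $\Dec^lS^l=1$, so the unit is the identity natural transformation and therefore $f^\flat=\Dec^l(f)$. By Lemma \ref{L:decEe} we have $\Ee_{l+r}=(\Dec^l)^{-1}(\Ee_r)$, that is, $f\in\Ee_{l+r}$ precisely when $\Dec^l(f)=f^\flat\in\Ee_r$. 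This is exactly the required criterion; in fact it holds for all $X$ and $Y$ (recall that all objects are fibrant), so no cofibrant replacement is needed.

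The computational heart of the argument, and the only genuinely non-formal step, is the weight bookkeeping establishing $S^l\Zz_r(p,n)=\Zz_{l+r}(p-ln,n)$ together with the compatibility of $S^l$ with $\varphi_{r+1}$ and with the coproduct decomposition of $\Bb_{r+1}$; once this is in place, both the Quillen adjunction and the equivalence follow formally from Lemmas \ref{adjunctionDecS} and \ref{L:decEe}. A minor point worth recording carefully is that the adjunction unit is genuinely the identity (so that $f^\flat=\Dec^l(f)$ on the nose), which is precisely what lets us dispense with cofibrant replacement in the equivalence criterion.
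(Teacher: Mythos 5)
Your proof is correct, and it differs from the paper's in a substantive way in the adjunction half of the argument. The paper establishes the Quillen adjunction on the right-adjoint side: it checks that $\Dec^l$ carries $(r+l)$-fibrations to $r$-fibrations (via the characterization of fibrations by surjectivity of $Z_{r+l}$, i.e.\ degree-wise surjectivity of $\Dec^{r+l}F_p f$, together with $\Dec^{r+l}=\Dec^r\circ\Dec^l$) and carries $\Ee_{r+l}$ into $\Ee_r$ (Lemma \ref{L:decEe}). You instead work on the left-adjoint side, computing $S^l$ on the generating cells: the identity $S^l\Zz_r(p,n)=\Zz_{l+r}(p-ln,n)$ is correct (the degree-$n$ piece of weight $p$ becomes weight $p-ln$, the degree-$(n+1)$ piece of weight $p-r$ becomes weight $p-r-l(n+1)=(p-ln)-(r+l)$), and combined with the splitting of Lemma \ref{L:fpushout} and the bijectivity of $(p,n)\mapsto(p-ln,n)$ this does give $S^l(I_r)=I_{l+r}$ and $S^l(J_r)=J_{l+r}$, whence $S^l$ is left Quillen by the standard recognition lemma for left adjoints into cofibrantly generated model categories. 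Your route yields strictly more structural information — the shift literally reindexes the generating sets, making the equivalence of the two model structures transparent at the cell level — at the cost of an explicit weight computation; the paper's route avoids all cell bookkeeping and transfers verbatim to the $\Ww_r$-model structures via Lemma \ref{L:decWw}. For the equivalence half, both arguments rest on Lemma \ref{L:decEe} and on the adjunction of Lemma \ref{adjunctionDecS} being the identity on underlying maps, but yours is cleaner: observing that the unit is the identity, so that $f^\flat=\Dec^l(f)$ on the nose, lets you apply $\Ee_{l+r}=(\Dec^l)^{-1}(\Ee_r)$ in both directions at once, whereas the paper treats the two directions separately and must additionally verify that the counit $\epsilon_B: S^l\Dec^l B\to B$ lies in $\Ee_{r+l}$ — a step your formulation bypasses entirely. (One cosmetic remark: your parenthetical ``recall that all objects are fibrant'' is not what allows you to drop cofibrant replacement; the real point, which you do make, is that the equivalence criterion is verified for \emph{all} objects, hence in particular for cofibrant sources and fibrant targets.)
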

\begin{proof}
To see that $(S^l,\Dec^l)$ is a Quillen adjunction, it suffices to check that 
$$\Dec^l(\Ff ib_{r+l})\subseteq \Ff ib_r\text{ and }\Dec^l(\Ee_{r+l})\subseteq \Ee_r.$$
Indeed, $f:A\to B$ is an $(r+l)$-fibration if and only if
$\Dec^{r+l}F_p f$ is degree-wise surjective for every $p\in\ZZ$. Since $\Dec^{r+l}=\Dec^r\circ \Dec^l$
we have that $\Dec^{l}f$ is an $r$-fibration. By Lemma~\ref{L:decEe} we have $\Ee_{r+l}=(\Dec^l)^{-1}(\Ee_r)$. Therefore $\Dec^l(\Ee_{r+l})\subseteq \Ee_r.$

To show that $(S^l,\Dec^l)$ is a Quillen equivalence, it suffices to show that
$S^lA\to B$ is in $\Ee_{r+l}$ if and only if $A\to \Dec^l B$ is in $\Ee_r$  (see \cite{Hir} 8.5.20 and 8.5.23).
Assume $f:S^lA\rightarrow B$ is a map in $\Ee_{r+l}$. The induced map $A\rightarrow \Dec^lB$ is obtained as the composite of $\Dec^l f$ with the unit of the 
adjunction. Since $f\in\Ee_{r+l}$ we get $\Dec^l f\in\Ee_r$. The unit of the adjunction $A\rightarrow \Dec^lS^l A$ is the identity so it lives in $\Ee_r$. 
Conversely if $g:A\rightarrow \Dec^l B$ lives in $\Ee_{r}$ then the induced map $S^lA\rightarrow B$ is obtained as $S^lg$ composed with the counit of the 
adjunction. We already know that $S^l g\in\Ee_{r+l}$. We are left to prove that the counit $\epsilon$ of the adjunction is in $\Ee_{r+l}$. Let $(A,d,F)$ be a 
filtered complex, and 
$\epsilon_A:S^l\Dec^l A\rightarrow A$. 
Recall that $\epsilon_A$ is the identity on the cochain complex $A$.
We have seen that $\epsilon_A\in\Ee_{r+l}$ if and only if 
$\Dec^l(\epsilon_A)\in\Ee_r$, 
but $\Dec^l\epsilon_A:\Dec^l S^l\Dec^l A\rightarrow \Dec^lA$
is the map $\Dec^l id_A$ which is the identity on every $\Dec^lF_pA$, hence a quasi-isomorphism. 
\end{proof}

We end this section by considering a class of weak equivalences $\Ww_r$ given by a weaker notion than $E_r$-quasi-isomorphism and
which,
for $r=0$, coincides with the class of filtered quasi-isomorphisms: those morphisms of filtered complexes inducing a quasi-isomorphism at each step of the filtration.

\begin{defi}Let $r\geq 0$.
 A morphism of filtered complexes $f:A\to B$ is called a \textit{$Z_r$-quasi-isomorphism} if the induced morphism
 $Z_r(f):Z_r(A)\to Z_r(B)$ is a quasi-isomorphism of $r$-bigraded complexes.
\end{defi}
We will denote by $\Ww_r$ the class of $Z_r$-quasi-isomorphisms. 
Note that $f\in \Ww_r$ if and only if $\Dec^r f\in \Ww_0$ and that 
we have inclusions $\Ww_r\subseteq \Ww_{r+1}$ for all $r\geq 0$.

\begin{prop}\label{WrEr}
For all $r\geq 0$ we have $\Ww_r\subseteq \Ee_r$. Conversely, if $f:A\to B$ is a morphism of filtered complexes with bounded below filtrations, then every $E_r$-quasi-isomorphism is a $Z_r$-quasi-isomorphism.
\end{prop}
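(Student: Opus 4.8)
The plan is to reduce the whole statement to the case $r=0$ and then analyse a single filtered complex through its column-wise short exact sequences. For the reduction, recall from the discussion preceding the statement that $f\in\Ww_r$ if and only if $\Dec^r f\in\Ww_0$, and from Lemma~\ref{L:decEe} (applied with $k=0$) that $f\in\Ee_r$ if and only if $\Dec^r f\in\Ee_0$. Moreover $\Dec^r$ preserves bounded below filtrations, since $\Dec^r F_pA^n=Z_r^{p-rn,p-rn+n}(A)\subseteq F_{p-rn}A^n$ vanishes as soon as $F_{p-rn}A^n$ does. Hence it suffices to prove both assertions for $r=0$: once $\Ww_0\subseteq\Ee_0$ is known, $f\in\Ww_r$ gives $\Dec^r f\in\Ww_0\subseteq\Ee_0$ and thus $f\in\Ee_r$; and once the converse holds at stage $0$ for bounded below filtrations, $f\in\Ee_r$ with bounded below filtration gives $\Dec^r f\in\Ee_0$ with bounded below filtration, hence $\Dec^r f\in\Ww_0$ and $f\in\Ww_r$.

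For the case $r=0$ I would pass to the cone. Writing $C:=C_0(f)$, the computations of $Z_0$ and $E_0$ of a cone (the latter being the remark following Notation~\ref{mmr}) identify $Z_0(C)\cong C(Z_0(f))$ and $E_0(C)\cong C(E_0(f))$ as $0$-bigraded complexes; since a morphism of $0$-bigraded complexes is a quasi-isomorphism precisely when its cone is acyclic, $f\in\Ww_0$ if and only if $Z_0(C)$ is acyclic, and $f\in\Ee_0$ if and only if $E_0(C)$ is acyclic. Now $Z_0^{p,n+p}(C)=F_pC^n$ and $E_0^{p,n+p}(C)=\mathrm{Gr}_p^F C^n:=F_pC^n/F_{p-1}C^n$, both with differential induced by $d$, and for each $p$ there is a short exact sequence of cochain complexes $0\to F_{p-1}C^\bullet\to F_pC^\bullet\to \mathrm{Gr}_p^F C^\bullet\to 0$, yielding a long exact sequence in cohomology. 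If $Z_0(C)$ is acyclic, i.e. $H^n(F_pC)=0$ for all $p,n$, then exactness of $H^n(F_pC)\to H^n(\mathrm{Gr}_p^F C)\to H^{n+1}(F_{p-1}C)$ forces $H^n(\mathrm{Gr}_p^F C)=0$, so $E_0(C)$ is acyclic. This proves $\Ww_0\subseteq\Ee_0$, and hence $\Ww_r\subseteq\Ee_r$, with no boundedness hypothesis.

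For the converse at stage $0$, assume the filtration of $C$ is bounded below and that $E_0(C)$ is acyclic, i.e. $H^n(\mathrm{Gr}_p^F C)=0$ for all $p,n$. I would prove $H^n(F_pC)=0$ for all $p,n$ by induction on $p$ with $n$ fixed: boundedness below gives $F_pC^n=0$, hence $H^n(F_pC)=0$, for $p$ sufficiently small, and the inductive step follows from exactness of $H^n(F_{p-1}C)\to H^n(F_pC)\to H^n(\mathrm{Gr}_p^F C)$, whose outer terms vanish by the inductive hypothesis and the assumption. Thus $Z_0(C)$ is acyclic and $f\in\Ww_0$, which yields the converse for all $r$. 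The only step requiring an argument beyond formal bookkeeping is this last induction, and it is precisely here that the hypothesis of bounded below filtrations is essential: without it there is no base case and the converse genuinely fails. I note that the fixed-$n$ induction works cleanly exactly because, after passing to the cone, the relevant statement is the vanishing of $H^n(F_pC)$ and the inductive step stays in a single cohomological degree; the identification $Z_0(C)\cong C(Z_0(f))$ is a direct unwinding of the definition of $Z_0$ of the $0$-cone, and all remaining steps are formal consequences of the long exact sequence.
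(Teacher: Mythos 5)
Your proof is correct and follows essentially the same route as the paper's: reduction to $r=0$ via d\'ecalage, then the long exact sequence of $0\to F_{p-1}\to F_p\to \mathrm{Gr}_p^F\to 0$ together with an induction on $p$ whose base case uses the bounded below hypothesis. The only differences are cosmetic---you run the $r=0$ step on the cone $C_0(f)$ and test acyclicity, where the paper applies the five lemma to $f$ directly---while your explicit check that $\Dec^r$ preserves bounded below filtrations fills in a detail the paper leaves implicit in its closing sentence (note only that the cone remark you invoke precedes, rather than follows, Notation~\ref{mmr}).
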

\begin{proof}
The inclusion $\Ww_0\subseteq \Ee_0$ follows from the short exact sequence 
\[0\to F_{p-1}A\to F_pA\to Gr_p^FA\to 0.\]
Let $f\in \Ww_r$. Then $\Dec^r f\in \Ww_0\subseteq \Ee_0$. Since $\Ee_r=(\Dec^r)^{-1}(\Ee_0)$, the result follows.

Let $f:A\to B\in \Ee_0$ be a morphism of filtered complexes with bounded below filtrations.
Then there exists a sufficiently small $k$ such that $F_kf=Gr_k^F f$.
Therefore, $H^*(F_k f)$ is an isomorphism. By induction over $p\geq k$, via the 
five lemma applied to the long exact sequence 
\[\cdots\to H^*(F_{p-1}f)\to H^*(F_pf)\to H^*(Gr_p^F f)\to H^*(F_{p-1}f)\to\cdots,\]
we get that $f\in \Ww_0$.
For $r>0$ the proof follows again using d\'{e}calage.
\end{proof}

An easy adaptation of the model structure constructed in Section $\ref{modelbFil}$
gives a cofibrantly generated model structure with $\Ww_r$ as the class of weak equivalences.
This extends Di Natale's result \cite{DiN} for $r=0$, to higher $r$ and unbounded filtrations.

\begin{defi}
Let $I_r''$ and $J_r''$ be the sets of morphisms of $\fcpx$ given by 
\[I''_r:=\left\{\kk^{n+1}_{(p-r)}\lra \Zz_{r}(p,n)\right\}_{p,n\in\ZZ}\text{ and }
J''_r:=\left\{0\lra \Zz_{r}(p,n)\right\}_{p,n\in\ZZ}.\]
\end{defi}

We have:

\begin{teo}\label{modelrfilcomplexA}For every $r\geq 0$, 
the category $\fcpx$ admits a right proper cofibrantly generated model structure, where: 
\begin{enumerate}[(1)]
\item weak equivalences are $Z_r$-quasi-isomorphisms,
\item fibrations are morphisms of filtered complexes $f:A\to B$ 
such that $Z_r(f)$ is bidegree-wise surjective, and
\item $I''_r$ and $J''_r$ are the sets of generating cofibrations and generating trivial cofibrations respectively.
\end{enumerate}
\end{teo}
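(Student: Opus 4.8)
The plan is to verify the five conditions of Kan's Theorem \ref{HoveyTheorem} for the sets $I''_r$ and $J''_r$, with the functor $Z_r\colon\fcpx\to\rbc$ now playing the role that $E_{r+1}$ played in the proof of Theorem \ref{modelrfilcomplexB}. Observe first that $J''_r=J_r$, so by Proposition \ref{J_r} the class $\cl{J''_r}{inj}$ consists exactly of those $f$ with $Z_r(f)$ bidegree-wise surjective, which is the claimed class of fibrations. Conditions $(1)$--$(3)$ are the easy ones: since $\Ww_r$ is the preimage under $Z_r$ of the quasi-isomorphisms of $\rbc$ and $Z_r$ preserves composition and identities, $\Ww_r$ inherits the two-out-of-three property and closure under retracts, giving $(1)$; and the domains of $I''_r$, namely $\kk^{n+1}_{(p-r)}$, and of $J''_r$, namely $0$, are compact relative to the respective cell complexes by the same argument as for the previous model structures (compare \cite{DiN}), giving $(2)$ and $(3)$.

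For condition $(4)$ I would prove $\cl{J''_r}{cof}=\cl{J_r}{cof}\subseteq\Ww_r$ by adapting the proof of Proposition \ref{Jrcofquis}. The one new ingredient is that $Z_r$ commutes with the formation of cones: a direct check of the definitions gives an isomorphism $Z_r(C_r(g))\cong C(Z_r(g))$ of $r$-bigraded complexes for every morphism $g$, and since $Z_r\circ T_r^{-1}=T^{-1}\circ Z_r$ at the level of bigraded complexes it follows that $Z_r(\Mm_r(B))\cong T^{-1}C(\mathrm{id})$ is acyclic. Given a $J_r$-cofibration $f\colon A\to B$, the lifting argument of Proposition \ref{Jrcofquis} produces $h\colon B\to A\oplus\Mm_r(B)$ with $hf=\vmat{id}{0}$ and $(f,\pi_1)h=1_B$; applying the functor $H\circ Z_r$, which sends finite direct sums to direct sums and kills $Z_r(\Mm_r(B))$, in place of $E_{r+1}$ then shows that $Z_r(f)$ is an isomorphism in cohomology, i.e. $f\in\Ww_r$.

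The step I expect to be the crux is condition $(5)$, namely $\cl{I''_r}{inj}=\Ww_r\cap\cl{J''_r}{inj}$, and here I would pass the lifting problems against $I''_r$ through the representability results. A map $\Zz_r(p,n)\to A$ is an element of $Z_r^{p,n+p}(A)$ by Lemma \ref{equivalencies_fil}$(1)$, while a short computation shows that a map $\kk^{n+1}_{(p-r)}\to A$ is precisely a $d$-cocycle in $F_{p-r}A^{n+1}$, equivalently a $\delta_r$-cocycle of the $r$-bigraded complex $Z_r(A)$ in bidegree $(p-r,n+p+1-r)$. Under these identifications the generating arrow $\kk^{n+1}_{(p-r)}\to\Zz_r(p,n)$ corresponds exactly to the inclusion $S^{p-r,n+p+1-r}\to D^{p,n+p}$ of a sphere into the corresponding disk in $\rbc$, with $\delta_r$ realizing the differential, and a solid square for $I''_r$ together with its lifts corresponds bijectively to the analogous sphere-to-disk square for $Z_r(f)$. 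Consequently $f$ is $I''_r$-injective if and only if $Z_r(f)$ has the right lifting property against all such sphere-to-disk inclusions, that is, if and only if $Z_r(f)$ is a trivial fibration in the standard projective model structure on $\rbc$ (cf. \cite{Hovey}), i.e. a bidegree-wise surjective quasi-isomorphism; and this is precisely the intersection $\Ww_r\cap\cl{J''_r}{inj}$. The delicate point is exactly this identification of the functor represented by $\kk^{n+1}_{(p-r)}$ with the cocycles of $Z_r$, which is what makes $I''_r$ detect quasi-isomorphisms rather than mere surjections.

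With the five conditions established, Theorem \ref{HoveyTheorem} produces the model structure with $I''_r$ and $J''_r$ as the generating cofibrations and generating trivial cofibrations. Finally, every object $A$ is fibrant, since $Z_r(A\to 0)$ is trivially surjective, so right properness follows directly from \cite[13.1.3]{Hir}.
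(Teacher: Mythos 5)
Your proof is correct, and it is worth noting that the paper itself contains no written proof of this theorem: it is presented as ``an easy adaptation'' of the constructions of Section~\ref{modelbFil}, i.e.\ the reader is meant to rerun Propositions~\ref{J_r}, \ref{Jrcofquis} and~\ref{I_r} with $\Ww_r$ in place of $\Ee_r$ and $I''_r$ in place of $I_r$. Your treatment of conditions (1)--(4) of Theorem~\ref{HoveyTheorem} follows exactly that intended route; in particular, your observation that $Z_r(C_r(g))\cong C(Z_r(g))$ and $Z_r\circ T_r^{-1}=T^{-1}\circ Z_r$, hence that $Z_r(\Mm_r(B))$ is acyclic, is precisely the $\Ww_r$-analogue of the remark ``$E_r(\Mm_r(B))$ is acyclic'' on which Proposition~\ref{Jrcofquis} relies, and it checks out. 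Where you genuinely diverge is condition (5): rather than redoing the element-chase of Proposition~\ref{I_r} for the new set $I''_r$, you use representability to translate each lifting problem against $\kk^{n+1}_{(p-r)}\to\Zz_r(p,n)$ into a sphere-to-disk lifting problem for $Z_r(f)$ in $\rbc$, and then invoke the classical characterization (as in Hovey's treatment of $\cpx$) of maps with this right lifting property as the degree-wise surjective quasi-isomorphisms. This translation is sound: a map $\kk^{n+1}_{(p-r)}\to A$ is exactly a $d$-cocycle of $Z_r(A)$ in bidegree $(p-r,n+p+1-r)$, a map $\Zz_r(p,n)\to A$ is an element of $Z_r^{p,n+p}(A)$ by Lemma~\ref{equivalencies_fil}, precomposition with the generating arrow is the differential, and as $(p,n)$ ranges over $\ZZ^2$ all bidegrees of disks occur; so $\cl{I''_r}{inj}$ consists exactly of those $f$ with $Z_r(f)$ a surjective quasi-isomorphism, which is $\Ww_r\cap\cl{J''_r}{inj}$. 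The trade-off between the two routes: the paper's intended argument stays self-contained and uniform in style with the $\Ee_r$-case, while your reduction makes conceptually transparent why these generating sets work --- $I''_r$ and $J''_r$ are the $Z_r$-preimages, under representability, of the standard generating cofibrations and trivial cofibrations of complexes --- and it delivers the surjectivity-plus-quasi-isomorphism characterization for free, which is exactly the point where $I''_r$ detects $\Ww_r$ rather than $\Ee_r$.
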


The analogue of Theorem $\ref{model_decs}$ on the equivalence of model structures via shift 
and d\'{e}calage is also true for the above model structure with $\Ww_r$ weak equivalences.
The proof is verbatim, using the following observation.

\begin{lem}\label{L:decWw}For all $k\geq 0$ we have 
$(S^r)^{-1}(\Ww_{k+r})=\Ww_{k}$
and 
$\Ww_{k+r}=(\Dec^r)^{-1}(\Ww_{k})$.
\end{lem}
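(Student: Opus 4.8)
The plan is to reduce everything to the single characterization, recorded just after the definition of $\Ww_r$, that a morphism $f$ lies in $\Ww_j$ if and only if $\Dec^j f\in\Ww_0$, together with the two functorial identities $\Dec^a\circ\Dec^b=\Dec^{a+b}$ (immediate from $\Dec^r=\Dec^1\circ\cdots\circ\Dec^1$) and $\Dec^r\circ S^r=1$ from Lemma~\ref{adjunctionDecS}. Once these are in hand, both equalities follow by bookkeeping of indices with no further homological input, exactly as Lemma~\ref{L:decEe} reduces the $\Ee_r$ case to Deligne's comparison of spectral sequences.

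First I would treat the d\'{e}calage identity $\Ww_{k+r}=(\Dec^r)^{-1}(\Ww_k)$. For a morphism $g$, the characterization gives $g\in\Ww_{k+r}$ if and only if $\Dec^{k+r}g\in\Ww_0$. On the other hand, $\Dec^r g\in\Ww_k$ if and only if $\Dec^k(\Dec^r g)\in\Ww_0$, and since $\Dec^k\circ\Dec^r=\Dec^{k+r}$ this last condition is literally $\Dec^{k+r}g\in\Ww_0$. Hence the two membership conditions coincide, which is the claim.

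Next I would treat the shift identity $(S^r)^{-1}(\Ww_{k+r})=\Ww_k$, that is, $S^r f\in\Ww_{k+r}$ if and only if $f\in\Ww_k$. Applying the characterization to $S^r f$, membership in $\Ww_{k+r}$ is equivalent to $\Dec^{k+r}(S^r f)\in\Ww_0$. I would then simplify the composite $\Dec^{k+r}\circ S^r=\Dec^k\circ\Dec^r\circ S^r=\Dec^k$, using $\Dec^r\circ S^r=1$ from Lemma~\ref{adjunctionDecS}. Thus $\Dec^{k+r}(S^r f)=\Dec^k f$, and the condition becomes $\Dec^k f\in\Ww_0$, which by the characterization is exactly $f\in\Ww_k$.

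The argument is essentially formal, so there is no serious obstacle; the only point demanding care is the index arithmetic in the composite $\Dec^{k+r}\circ S^r=\Dec^k$, which is where the adjunction identity $\Dec^r\circ S^r=1$ does the real work. As a cross-check, and to make the proof genuinely parallel to that of Lemma~\ref{L:decEe}, I would also record the reindexing isomorphisms of $r$-bigraded complexes $Z_r(S^1A)\cong Z_{r-1}(A)$ and $Z_r(\Dec^1 A)\cong Z_{r+1}(A)$, which follow directly from the definitions of $Z_r$, the shift and the d\'{e}calage, using $d^2=0$ to collapse the nested preimages; iterating these recovers both displayed equalities on the nose.
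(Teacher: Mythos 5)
Your proof is correct and follows essentially the same route as the paper's: both arguments rest on the characterization $f\in\Ww_j \Leftrightarrow \Dec^j f\in\Ww_0$ together with the composition identities $\Dec^{k+r}=\Dec^k\circ\Dec^r$ and $\Dec^{k+r}\circ S^r=\Dec^k$ (the latter from $\Dec^r\circ S^r=1$). The paper simply states these two identities and concludes in one line; your version spells out the same bookkeeping explicitly, and the closing reindexing isomorphisms are a harmless extra cross-check rather than a different method.
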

\begin{proof} Since $\Dec^{k+r}\circ S^r=\Dec^k$, one has $f\in\Ww_k$ if and only if $S^r f$ in $\Ww_{k+r}$. Similarly, since
$\Dec^{k+r}=\Dec^k\circ \Dec^r$ one has $f\in\Ww_{k+r}$ if and only if $\Dec^r f$ in $\Ww_{k}$. \end{proof}

\section{Model category structures on bicomplexes}
\label{sec:modelbcxs}
 
In this section, we present our model structures on the category of bicomplexes. We begin with
a detailed study of the $r$-cycles and $r$-boundaries of the spectral sequence, together with the notion
of \emph{witnesses} to how elements are such cycles and boundaries. These notions are defined and then
shown to be given by representable functors. Subsequently, we develop a notion of $r$-cylinder and use
it to define $r$-homotopy. Finally, we establish two different cofibrantly generated model structures
for which the weak equivalences are the $E_r$-quasi-isomorphisms.

\subsection{Witness cycles and witness boundaries}\label{witness_defs}
We next describe the terms of the spectral sequence associated with a bicomplex, 
in terms of witness $r$-cycles and witness $r$-boundaries.

\begin{defi}

Let $(A,d_0,d_1)$ be a bicomplex and let $r\geq 0$.

Define the $\kk$-bigraded modules of \textit{witness $r$-cycles} by
$ZW_0^{p,q}(A)=Z_0^{p,q}(A)=A^{p,q}$  and for $r\geq 1$ by
\[
ZW_r^{p,q}(A)=\left\{
\begin{array}{l|l}
(a_0, a_1, \dots, a_{r-1})\, \,  &a_i\in A\bgd{p-i}{q-i}, d_0 a_0=0\\
&\text{and }
d_1 a_{i-1}=d_0 a_i\text{ for all }1\leq i\leq r-1
\end{array}
\right\}.
\]
For $r\geq 1$, define an $r$-bigraded complex structure on $ZW_r(A)$ by
\[d_r(a_0,\ldots,a_{r-1})=(d_1a_{r-1},0,\ldots,0).\] 
Define a map of $\kk$-bigraded modules 
\[z_r:ZW_r^{p,q}(A)\longrightarrow Z_r^{p,q}(A)\]
by $z_0=id_A$ and, for $r\geq 1$, by letting 
\[ (a_0,\ldots,a_{r-1})\mapsto a_0.\]

Define the $\kk$-bigraded modules of \textit{witness $r$-boundaries} by $BW_0^{p,q}(A)=0$, $BW_1^{p,q}(A)=A^{p,q}$ and for
$r\geq 2$ by
\[BW_r^{p,q-1}(A)=\left\{
\begin{array}{l|l}
 (b_0,\dots, b_{r-2};a; c_0,\dots, c_{r-2})\, \,&a\in A\bgd{p}{q-1}, \\
 & (b_0,\dots, b_{r-2}) \in ZW_{r-1}\bgd{p+r-1}{q+r-2}(A) \\
 & ( c_0,\dots, c_{r-2}) \in ZW_{r-1}\bgd{p-1}{q-1}(A)
\end{array}
\right\}.
\]

Define a map of $\kk$-bigraded modules of bidegree $(0,1)$ 
\[b_r:BW_r^{p,q-1}(A)\rightarrow B_r^{p,q}(A)\]
by $b_0=0$, $b_1=d_0$ and, for $r\geq 2$,
by letting 
\[(b_0,\ldots,b_{r-2},a, c_0,\ldots, c_{r-2})\mapsto d_0a+d_1b_{r-2}.\]
Lastly, define a map of $\kk$-bigraded modules of bidegree $(0,1)$ 
\[w_r:BW_r^{p,q-1}(A)\longrightarrow ZW_r^{p,q}(A)\] 
by $w_0=0$, $w_1=d_0$
and, for $r\geq 2$, by letting
\[(b_0,\ldots,b_{r-2};a; c_0,\ldots, c_{r-2})\mapsto (d_0a+d_1b_{r-2},d_1a+ c_0, c_1,\ldots, c_{r-2}).\]

Given a morphism of bicomplexes $f:A\rightarrow B$, the morphisms of $\kk$-bigraded modules $ZW_r(f), BW_r(f)$ are defined componentwise, giving rise to functors $ZW_r,BW_r:\bcpx\rightarrow \bimod$ and natural transformations $z_r,b_r,w_r$. In addition the functor $ZW_r$ may be lifted to take values in the category $\rbc$ and $w_r(BW_r(A))$ is a sub $r$-bigraded complex of
$ZW_r(A)$.
\end{defi}

\begin{rmk}\label{R:functorwitness} Note that $(a_0, a_1, \dots, a_{r-1})\in ZW_r^{p,q}(A)$ corresponds to an element $a_0\in Z_r^{p,q}(A)$, together
with a sequence of elements $(a_1, \dots, a_{r-1})$ witnessing how $a_0$ is such an element. Note also that the functors $z_r$ and $b_r$ are projections and that, for $r\geq 2$,
\[ BW_r^{p,q-1}(A)=ZW_{r-1}\bgd{p+r-1}{q+r-2}(A)\oplus A\bgd{p}{q-1}\oplus ZW_{r-1}\bgd{p-1}{q-1}(A). \]
\end{rmk}

\begin{prop} For every $r\geq 0$, there is a commutative diagram of natural transformations of functors from $\bcpx$ to $\bimod$
\[
\xymatrix{ BW_r\ar[r]^{w_r}\ar[d]_{b_r}&  ZW_r\ar[d]_{z_r}& \\
 B_r\ar@{^{(}->}[r]&  Z_r \ar@{->>}[r]& E_r\bgd{p}{q}}
\]
and the natural transformation
 $\pi_r:ZW_r\rightarrow E_r$ induced by the above diagram satisfies
\[ \Ker\;\pi_r(A) = \Img\; w_r(A),\]
for every bicomplex $A$.
In particular, we have  \[E_r^{p,q}(A)\cong ZW_r^{p,q}(A)/w_r(BW^{p,q-1}_r(A)),\]
giving rise to an isomorphism of functors from $\bcpx$ to $\rbc$.
\end{prop}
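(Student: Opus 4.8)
The plan is to reduce the statement to two inclusions of submodules of $ZW_r$, after first recording the commutativity of the square and identifying $\pi_r$ explicitly. I would begin by checking commutativity: for $r\geq 2$, applying $z_r$ to $w_r(b_0,\ldots,b_{r-2};a;c_0,\ldots,c_{r-2})$ selects the first component $d_0a+d_1b_{r-2}$, which is exactly $b_r$ of the same element viewed inside $Z_r$ through the inclusion $B_r\hookrightarrow Z_r$; the cases $r=0,1$ are immediate from $w_0=0=b_0$ and $w_1=d_0=b_1$. By construction $\pi_r$ is the composite of $z_r$ with the projection $Z_r\twoheadrightarrow E_r=Z_r/B_r$. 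The inclusion $\Img w_r(A)\subseteq \Ker\pi_r(A)$ is then formal: commutativity gives $\pi_r\circ w_r=\mathrm{pr}\circ z_r\circ w_r=\mathrm{pr}\circ b_r$, and $b_r$ lands in $B_r$, which is killed by the projection.

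The substance is the reverse inclusion $\Ker\pi_r(A)\subseteq \Img w_r(A)$, and here I would argue by an explicit construction. An element of $\Ker\pi_r$ is a witness cycle $(a_0,\ldots,a_{r-1})\in ZW_r^{p,q}(A)$ whose leading term $a_0$ lies in $B_r^{p,q}(A)$. By Lemma~\ref{classical_ss} the latter condition supplies elements $b_0,\ldots,b_{r-1}$ with $a_0=d_0b_{r-1}+d_1b_{r-2}$, $d_0b_0=0$ and $d_1b_{i-1}=d_0b_i$ for $1\leq i\leq r-2$; note $b_{r-1}\in A^{p,q-1}$ by the bidegree count. I would then produce a preimage under $w_r$ by setting $a:=b_{r-1}$, taking $(b_0,\ldots,b_{r-2})$ as the first witness $(r-1)$-cycle (its defining relations are precisely those just listed, and its bidegree is $(p+r-1,q+r-2)$), and defining the second witness $(r-1)$-cycle by $c_0:=a_1-d_1b_{r-1}$ and $c_j:=a_{j+1}$ for $1\leq j\leq r-2$. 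Comparing $w_r(b_0,\ldots;a;c_0,\ldots)=(d_0a+d_1b_{r-2},\,d_1a+c_0,\,c_1,\ldots,c_{r-2})$ with $(a_0,\ldots,a_{r-1})$ component by component then recovers the original tuple, using the boundary relation in the $0$th slot and the telescoping $d_1a+c_0=d_1b_{r-1}+(a_1-d_1b_{r-1})=a_1$ in the $1$st.

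The main obstacle, and the step I would treat most carefully, is verifying that $(c_0,\ldots,c_{r-2})$ is a genuine element of $ZW_{r-1}^{p-1,q-1}(A)$, i.e. that $d_0c_0=0$ and $d_1c_{j-1}=d_0c_j$ for $1\leq j\leq r-2$. The relation $d_0c_0=0$ follows by combining the witness relation $d_0a_1=d_1a_0$ with the boundary expression for $a_0$ and the identities $d_0d_1=d_1d_0$, $d_1^2=0$; the relations for $j\geq 2$ are just the witness relations $d_1a_j=d_0a_{j+1}$ for $(a_0,\ldots,a_{r-1})$, while the $j=1$ case reduces to $d_1a_1=d_0a_2$ after using $d_1^2b_{r-1}=0$. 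The bookkeeping with index shifts and the commuting differentials is where the care is needed; once it is done, surjectivity of $z_r$ onto $Z_r$ (immediate from the definitions of $Z_r$ and $ZW_r$) makes $\pi_r$ surjective, so the first isomorphism theorem yields $E_r^{p,q}(A)\cong ZW_r^{p,q}(A)/w_r(BW_r^{p,q-1}(A))$. Finally, since every map involved is a natural transformation and $\pi_r$ intertwines the differential $d_r(a_0,\ldots,a_{r-1})=(d_1a_{r-1},0,\ldots,0)$ on $ZW_r$ with $\delta_r[a_0]=[d_1a_{r-1}]$ on $E_r$ by Lemma~\ref{classical_ss}, the resulting isomorphism is one of functors $\bcpx\to\rbc$.
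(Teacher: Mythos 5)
Your proposal is correct and follows essentially the same route as the paper: both prove $\Img w_r\subseteq\Ker\pi_r$ formally from $\pi_r\circ w_r=0$, and both establish the reverse inclusion by constructing, from a witness cycle $(a_0,\ldots,a_{r-1})$ with $a_0=d_0b_{r-1}+d_1b_{r-2}\in B_r$, the exact same preimage $(b_0,\ldots,b_{r-2};\,b_{r-1};\,a_1-d_1b_{r-1},a_2,\ldots,a_{r-1})\in BW_r(A)$. Your extra care in verifying that $(c_0,\ldots,c_{r-2})$ really lies in $ZW_{r-1}^{p-1,q-1}(A)$ just makes explicit what the paper's proof leaves as a short check.
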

\begin{proof} 
A simple verification shows that the above diagram commutes. Let $A$ be a bicomplex.
Since $\pi_r \circ w_r=0$,
the inclusion $\Img\; w_r(A)\subseteq \Ker\;\pi_r(A)$ holds.
Let $x=(a_0,\dots,a_{r-1})$ be in $\Ker\;\pi_r(A)$.
There exists $(b_0,\dots,b_{r-1})$ such that
$a_0=d_0b_{r-1}+d_1b_{r-2}$,
$d_0b_0=0$ and $d_1 b_{i-1}=d_0 b_i$, for all  $1\leq i\leq r-2$.
Since $d_1 a_0=d_0a_1=d_0 d_1 b_{r-1}$, the element $ c_0=a_1-d_1b_{r-1}$ satisfies $d_0 c_0=0, d_1 c_0=d_1a_1=d_0a_2$.
Therefore we have that
$y=(b_0,\dots, b_{r-1};a_1-d_1b_{r-1};a_2,\dots a_{r-1})\in BW_r(A)$
satisfies  $w_r(y)=x$.
\end{proof}

\begin{lem}\label{epi_r}
Let $f:K\rightarrow L$ be a morphism of bicomplexes and $r\geq 0$. Then, the following are equivalent.
\begin{enumerate}
\item $f$ induces a surjective morphism $ZW_k(f):ZW_k(K)\rightarrow ZW_k(L)$, for all $0\leq k\leq r$.
\item $f$ induces a surjective morphism $Z_k(f):Z_k(K)\rightarrow Z_k(L)$, for all $0\leq k\leq r$.
\item $f$ induces a surjective morphism $E_k(f):E_k(K)\rightarrow E_k(L)$, for all $0\leq k\leq r$.
\end{enumerate}
\end{lem}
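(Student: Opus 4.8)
The plan is to prove the cyclic chain of implications $(1)\Rightarrow(2)\Rightarrow(3)\Rightarrow(1)$. The first two implications are purely formal, whereas all the content lies in the last one, which I would establish by induction on the stage $k$.

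For $(1)\Rightarrow(2)$, fix $k$ with $0\le k\le r$. The projection $z_k\colon ZW_k\to Z_k$ is a bidegree-wise surjective natural transformation, since by Lemma~\ref{classical_ss} every $k$-cycle $a_0\in Z_k^{p,q}(A)$ admits a witnessing sequence $(a_0,\dots,a_{k-1})\in ZW_k^{p,q}(A)$. Naturality gives a commutative square relating $ZW_k(f)$ and $Z_k(f)$ through the vertical surjections $z_k(K)$ and $z_k(L)$; since $z_k(L)\circ ZW_k(f)=Z_k(f)\circ z_k(K)$ is surjective whenever $ZW_k(f)$ is, it follows that $Z_k(f)$ is surjective. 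The implication $(2)\Rightarrow(3)$ is identical, with $z_k$ replaced by the canonical quotient $Z_k\twoheadrightarrow E_k=Z_k/B_k$, which is plainly a surjective natural transformation.

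For $(3)\Rightarrow(1)$, I would prove by induction on $k$ that surjectivity of $E_j(f)$ for all $j\le k$ forces $ZW_k(f)$ to be bidegree-wise surjective. The base case $k=0$ is immediate, as $ZW_0$ and $E_0$ both coincide with the forgetful functor, so $ZW_0(f)=f=E_0(f)$. For the inductive step, take $\eta=(y_0,\dots,y_{k-1})\in ZW_k^{p,q}(L)$. Surjectivity of $E_k(f)$ yields $\xi\in ZW_k^{p,q}(K)$ with $[f(\xi)]=[\eta]$ in $E_k(L)$, which under the isomorphism $E_k\cong ZW_k/w_k(BW_k)$ means that $f(\xi)=\eta+w_k(\beta)$ for some $\beta\in BW_k^{p,q-1}(L)$. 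Using the direct-sum decomposition of Remark~\ref{R:functorwitness},
\[ BW_k^{p,q-1}(L)=ZW_{k-1}^{p+k-1,q+k-2}(L)\oplus L^{p,q-1}\oplus ZW_{k-1}^{p-1,q-1}(L), \]
I would lift $\beta$ to some $\tilde\beta\in BW_k^{p,q-1}(K)$ componentwise: the inductive hypothesis gives surjectivity of $ZW_{k-1}(f)$ to lift the two outer witness-cycle factors, while surjectivity of $f=ZW_0(f)$ lifts the middle factor. Then naturality of $w_k$ shows that $\xi-w_k(\tilde\beta)\in ZW_k^{p,q}(K)$ is sent under $ZW_k(f)$ to $f(\xi)-w_k(\beta)=\eta$, so $ZW_k(f)$ is surjective.

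The main obstacle is precisely this inductive step: $E_k$-surjectivity only controls witness cycles modulo the image of $w_k$, and cancelling the resulting error term $w_k(\beta)$ requires lifting $\beta$ itself along $BW_k(f)$. This is possible exactly because $BW_k$ splits into copies of lower witness-cycle functors (handled by the inductive hypothesis) and a single copy of the underlying bicomplex (handled by the case $j=0$). A minor point requiring separate attention is the low index $k=1$, where $BW_1$ is the underlying bicomplex and $w_1=d_0$; the same argument applies, using these base identifications in place of the general decomposition, and I would record this case explicitly.
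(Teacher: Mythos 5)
Your proof is correct and follows essentially the same route as the paper: the formal implications $(1)\Rightarrow(2)\Rightarrow(3)$ via the surjective natural transformations $ZW_k\to Z_k\to E_k$, and an induction for $(3)\Rightarrow(1)$ whose key step lifts the error term $w_k(\beta)$ using the direct-sum splitting of $BW_k$ into lower witness-cycle factors (inductive hypothesis) and a copy of the underlying bicomplex (surjectivity of $f$). The only, immaterial, difference is at the lowest stage: the paper treats $r=1$ by a snake-lemma diagram chase, whereas you run the same element-wise lifting argument there using the identifications $BW_1=A$ and $w_1=d_0$.
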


\begin{proof} For $r=0$ the three assertions tell us that $f$ is a surjective morphism. Assume $r\geq 1$.
One has $(1)\Rightarrow (2)\Rightarrow (3)$ because the natural transformations $ZW_k\rightarrow Z_k\rightarrow E_k$ are projections for every $k$. Let us prove $(3)\Rightarrow (1)$ by induction on $r$.

Assume hypothesis $(3)$ holds.
If $r=1$,
applying the snake lemma to the diagram
\[
\xymatrix{   & K\ar[r]^-{d_0}\ar[d]_{d_0 f} & ZW_1(K)\ar[r]\ar[d]_{ZW_1(f)}& E_1(K)\ar[r]\ar[d]_{E_1(f)} & 0 \\
 0\ar[r]& w_1(BW_1(L))\ar[r] & ZW_1(L)\ar[r] &E_1(L)\ar[r] & 0}
 \]
gives the short exact sequence
\[ 0\rightarrow \Coker(ZW_1(f))\rightarrow \Coker(E_1(f))\rightarrow 0,\]
for $d_0f:K\rightarrow BW_1(L)$ is surjective by assumption on $f$.
Hence $(3)$ implies $(1)$.
Assume $r>1$. By induction hypothesis, the map $f$ induces a surjective morphism 
$ZW_k(f)$
for every $0\leq k\leq r-1$.
Let $z\in ZW_r\bgd{p}{q}(L)$. By surjectivity of $E_r(f)$ there exists $z'\in ZW_r\bgd{p}{q}(K)$ such that  $[f(z')]=[z]$, that is $z-f(z')=w_r(u)$ for some $u\in BW_r\bgd{p}{q-1}(L)$. But 
\[BW_r(L)\bgd{p}{q-1}=ZW_{r-1}\bgd{p+r-1}{q+r-2}(L)\oplus L\bgd{p}{q-1}\oplus ZW_{r-1}\bgd{p-1}{q-1}(L)\]
and $f$ and $ZW_{r-1}(f)$ are surjective, therefore so is $BW_r(f)$. Hence, there exists $v\in BW_r\bgd{p}{q}(K)$
such that $z=f(z'+w_r(v))$. 
\end{proof}

\begin{rmk}\label{mainobs}The proof of the above lemma shows that for $f:K\rightarrow L$  a morphism of bicomplexes and $r\geq 1$, the following are equivalent.
\begin{enumerate}
\item The maps $ZW_r(f)$, $ZW_{r-1}(f)$ and $f$ are surjective.
\item The maps $E_{r}(f$)  and $ZW_{r-1}(f)$ and $f$ are surjective.
\end{enumerate}
\end{rmk}

\subsection{Representability of the witness cycles and boundaries functors}\label{representability} 
In this section we show that the functors $ZW_r$ and $BW_r$ are representable by bicomplexes $\Zzw_r$ and $\Bbw_r$. 
These bicomplexes will play the role of the spheres and discs that we may find in a cofibrantly generated model category structure (see \cite{Hovey}). 

We represent such a bicomplex $A$ by a graph, where vertices represent finite direct sums of copies of $\kk$. Viewing elements of a finite direct sum as column vectors, an arrow in the graph  corresponds to the differential $d_0^{i,j}$ or $d_1^{i,j}$ and is  described using matrix notation. If there is no vertex at place $(i,j)$, it means that $A^{i,j}=0$ and if there is no arrow, it means that the differential considered is $0$.

\begin{defi} The $0$-disc at place $(i,j)$, $\DD_0(i,j)$, is the bicomplex given by
 \[\xymatrix{
\kk\bgd{i-1}{j+1}&\ar[l]_{1}\kk\bgd{i}{j+1}\\
 \kk\bgd{i-1}{j}\ar[u]^{1}&\ar[l]^1\kk\bgd{i}{j}\ar[u]_{1}
 }\]
 \end{defi}

\begin{defi}\label{r-sphere}
Define  the bicomplex $\Zzw_0(i,j)=\DD_0(i,j)$ and for $r\geq 1$,  define the bicomplex $\Zzw_r(i,j)$ at place $(i,j)$ by
\[ \Zzw_r(i,j)^{p,q}=
\begin{cases} 
	\kk & \text{ if } (p,q)=(i-k,j-k),\ 0\leq k \leq r-1, \\
	\kk & \text{ if } (p,q)=(i-k-1,j-k), 0\leq k\leq r-1,\\
	0 & \text { else, } 
	\end{cases}
\]
with differentials  
\begin{align*}
&d_0:(\kk\bgd{i-k}{j-k}) \stackrel{\idmat}{\longrightarrow}\kk\bgd{i-k}{j-k+1},\qquad
\text{for $1\leq k\leq r-1$,}\\
&d_1:\kk\bgd{i-k}{j-k} \stackrel{\idmat}{\longrightarrow}\kk\bgd{i-k-1}{j-k},\qquad
\text{for $0\leq k\leq r-1$},
\end{align*}
 and all other differentials are 0.
\end{defi}

We may depict $\Zzw_r(i,j)$ as a staircase graph with $r$ horizontal steps as follows, where each bullet represents $R$, each arrow represents the identity map and the top-right bullet has bidegree $(i,j)$.
\[
\resizebox{4cm}{!}{
\xymatrix{
&&&&\bullet&\ar[l]\bullet\\
&&&\bullet&\ar[l]\ar[u]\bullet\\
&&\bullet\ar@{.}[ur]&\\
&\bullet&\ar[l]\ar[u]\bullet\\
\bullet&\ar[l]\ar[u]\bullet
}
}
\]

\begin{examples}
$\Zzw_1(i,j)$ is the bicomplex given by:
\[
	\kk\bgd{i-1}{j}\stackrel{1}{\longleftarrow}\kk\bgd{i}{j}.
\]
$\Zzw_3(i,j)$ is given by:
\begin{center}
\begin{tikzpicture}
scale=0.15
  \matrix (m) [matrix of math nodes, row sep=2em,
    column sep=2em]{
& &  \kk^{i-1,j} &\kk^{i,j} \\
&\kk^{i-2,j-1} &\kk^{i-1,j-1}& \\
\kk^{i-3,j-2}&\kk^{i-2,j-2} && \\};
 \path[-stealth]
     (m-1-4) edge  node[midway,above] {$\idmat$} (m-1-3) 
    (m-2-3) edge   node[midway,right] {$\idmat$} (m-1-3) edge   node[midway,above] {$\idmat$} (m-2-2)
     (m-3-2) edge   node[midway,right] {$\idmat$} (m-2-2) edge   node[midway,above] {$\idmat$} (m-3-1);
\end{tikzpicture}.
\end{center}

\end{examples}

\begin{defi} Define the bicomplex $\Bbw_1(i,j-1)=\DD_0(i,j-1)$ and for $r\geq 2$ the bicomplex $\Bbw_r(i,j-1)$ by
 \[\Bbw_r(i,j-1)=\Zzw_{r-1}(i-1,j-1)\oplus \DD_0(i,j-1)\oplus \Zzw_{r-1}(i+r-1,j+r-2).\]
 
For $r=1$, define a morphism of bicomplexes 
$\iota_1(i,j):\Zzw_1(i,j)\longrightarrow \Bbw_1(i,j-1)=\DD_0(i,j-1)$ by letting
$\iota_1(i,j)\bgd{i}{j}=\iota_1(i,j)\bgd{i-1}{j}=1_R:R\to R$ and
for $r\geq 2$, define a morphism of bicomplexes 
$\iota_r(i,j):\Zzw_r(i,j)\longrightarrow \Bbw_r(i,j-1)$ by letting
\begin{align*}
\iota_r(i,j)\bgd{i}{j}=\iota_r(i,j)\bgd{i-1}{j-1}:&\xymatrix{ \kk\ar[r]^-{\vmat{1}{1}} & \kk\oplus\kk} \\
\iota_r(i,j)\bgd{p}{q}:&  \xymatrix{ \kk\bgd{p}{q}\ar[r]^{\idmat} & \kk\bgd{p}{q}},
\end{align*}
for every $(p,q)$ such that $\Zzw_r(i,j)\bgd{p}{q}=\Bbw_r(i,j-1)\bgd{p}{q}=\kk$.

\end{defi}

With the conventions explained above, $\Bbw_r(i,j-1)$ may be pictured as follows, where the bottom
right corner of the box is in bidegree $(i, j-1)$.
\vspace{-0.5cm}
\[
\resizebox{6cm}{!}{
\xymatrix{
&&&&&&&&&\bullet&\ar[l]\bullet\\
&&&&&&&&\bullet&\ar[l]\ar[u]\bullet\\
&&&&&&&\bullet\ar@{.}[ur]&\\
&&&&&\bullet&\ \ \bullet\ \bullet\ar[l]&\ar[l]\ar[u]\bullet\\
&&&&\bullet&\ar[l]\bullet\ \bullet\hspace{0.2cm}{\ar[u]}&\ar[l]\bullet\ar[u]\\
&&&\bullet&\ar[l]\ar[u]\bullet\\
&&\bullet\ar@{.}[ur]&\\
&\bullet&\ar[l]\ar[u]\bullet\\
}
}
\]

\begin{example}
$\Bbw_2(i,j-1)$ is the following bicomplex:   
\begin{center}
\begin{tikzpicture}
scale=0.15
  \matrix (m) [matrix of math nodes, row sep=2em,
    column sep=2em]{
    & &  \kk^{i-1,j} &(\kk\oplus\kk)^{i,j} & \kk^{i+1,j}&\\
&\kk^{i-2,j-1} &( \kk\oplus\kk)^{i-1,j-1}&\kk\bgd{i}{j-1} &\\};
 \path[-stealth]
     (m-1-4) edge  node[midway,above] {$\hmat{0}{1}$} (m-1-3) 
     (m-1-5) edge  node[midway,above] {$\vmat{1}{0}$} (m-1-4) 
       (m-2-4) edge  node[midway,below] {$\vmat{0}{1}$} (m-2-3) 
       (m-2-4) edge node[midway,right] {$\vmat{0}{1}$} (m-1-4)
    (m-2-3) edge   node[midway,left] {$\hmat{0}{1}$} (m-1-3) edge   node[midway,below] {$\hmat{1}{0}$} (m-2-2);
               \end{tikzpicture}
\end{center}

The map $\iota_2(i,j):\Zzw_2(i,j)\rightarrow \Bbw_2(i,j-1)$ of bicomplexes is depicted in 
the following diagram, where $\Zzw_2(i,j)$ appears with dotted arrows 
and $\Bbw_2(i,j-1)$ with dashed arrows.
\begin{center}
\tikzset{
      arrow/.style = {  thick, color=blue, ->, >=Triangle},}
\begin{tikzpicture}
scale=0.15
  \matrix (m) [matrix of math nodes, row sep=16pt,
    column sep=8pt]{
&&    & \kk^{i-1,j}& &\kk^{i,j}\\
& & \kk^{i-1,j} &&( \kk\oplus\kk)^{i,j} &&\kk^{i+1,j} \\
&\kk^{i-2,j-1}& &\kk^{i-1,j-1}&&& \\ 
\kk^{i-2,j-1} && ( \kk\oplus\kk)^{i-1,j-1}&&\kk^{i,j-1}&& \\};
  \path[-stealth]
   (m-1-6) edge [dotted] (m-1-4) 
    (m-3-4) edge [dotted] (m-1-4) edge [dotted] (m-3-2)
    (m-2-7) edge [dashed] (m-2-5) 
    (m-4-5) edge [dashed] (m-4-3)
     (m-4-3)  edge [-,line width=6pt,draw=white] (m-2-3)  
     edge   (m-2-3); 
     \draw[->, thick][dashed] (m-4-3) --(m-4-1) ;
     \draw[->,thick][dashed] (m-4-5) -- (m-2-5);
     \draw[->,thick][dashed] (m-4-3) -- (m-2-3);
      \draw[->,thick][dashed] (m-2-5) -- (m-2-3) ;
      \draw [arrow] (m-1-4) -- (m-2-3) node[midway,left,above] {$\idmat$};
         \draw [arrow] (m-1-6) -- (m-2-5) node[above=6pt] {$\vmat{1}{1}$};
          \draw [arrow] (m-3-4) -- (m-4-3) node[pos=0,below] {$\vmat{1}{1}$};
            \draw [arrow] (m-3-2) -- (m-4-1) node[midway,left,above] {$\idmat$};
\end{tikzpicture}
\end{center}

\end{example}

Directly from the definitions we get the following lemmas.

\begin{lem}\label{lemretr} 
For $r\geq 1$ the bicomplex $\DD_0(i,j-1)$ is a retract of $\Bbw_r(i,j-1)$ and for $r\geq 2$ the bicomplex $\Zzw_{r-1}(i-1,j-1)$ is a retract of $\Bbw_r(i,j-1)$.
\end{lem}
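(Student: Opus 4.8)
The plan is to obtain both retractions directly from the defining direct-sum decomposition of $\Bbw_r(i,j-1)$, exploiting that $\bcpx$ is abelian, so that finite direct sums are biproducts and every summand is canonically a retract.

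First I would recall that, for $r\geq 2$, the bicomplex $\Bbw_r(i,j-1)$ is \emph{defined} as the direct sum
\[
\Bbw_r(i,j-1)=\Zzw_{r-1}(i-1,j-1)\oplus \DD_0(i,j-1)\oplus \Zzw_{r-1}(i+r-1,j+r-2)
\]
in the category $\bcpx$. The key general fact is that in an additive (in particular abelian) category, a finite biproduct $X=X_1\oplus X_2\oplus X_3$ is equipped with canonical inclusions $\iota_k\colon X_k\to X$ and projections $\pi_k\colon X\to X_k$ satisfying $\pi_k\circ\iota_k=\mathrm{id}_{X_k}$; since these are morphisms of bicomplexes (the direct sum being formed in $\bcpx$), each summand $X_k$ is a retract of $X$. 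I would then simply apply this with $X_1=\Zzw_{r-1}(i-1,j-1)$ and $X_2=\DD_0(i,j-1)$ to conclude that, for $r\geq 2$, both $\Zzw_{r-1}(i-1,j-1)$ and $\DD_0(i,j-1)$ are retracts of $\Bbw_r(i,j-1)$.

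For the remaining assertion concerning $r=1$, I would treat this case separately, since the decomposition above only features a $\Zzw_{r-1}$-summand for $r\geq 2$. Here the definition gives $\Bbw_1(i,j-1)=\DD_0(i,j-1)$, so $\DD_0(i,j-1)$ is a retract of $\Bbw_1(i,j-1)$ via the identity maps. This completes the claim that $\DD_0(i,j-1)$ is a retract of $\Bbw_r(i,j-1)$ for all $r\geq 1$.

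There is essentially no obstacle here: the entire content is the observation that being a direct summand in an additive category is the same as being a retract. The only two points worth an explicit sentence are that the canonical inclusion and projection of the biproduct are genuine morphisms of bicomplexes (automatic, as the sum is taken in $\bcpx$), and that the boundary case $r=1$ must be handled on its own because the $\Zzw_{r-1}$-summand is absent there.
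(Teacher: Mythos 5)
Your proof is correct and is exactly what the paper intends: the paper states this lemma follows ``directly from the definitions,'' and your argument---that $\Bbw_r(i,j-1)$ is by definition the direct sum $\Zzw_{r-1}(i-1,j-1)\oplus \DD_0(i,j-1)\oplus \Zzw_{r-1}(i+r-1,j+r-2)$ in $\bcpx$ for $r\geq 2$, so each summand is a retract via the canonical inclusion and projection, with the case $r=1$ handled by $\Bbw_1(i,j-1)=\DD_0(i,j-1)$---is precisely that observation made explicit.
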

\begin{lem}\label{lempush}
For $r\geq 1$,  the diagram
 \[
 \xymatrix{
 \Zzw_r(i,j)\ar[d]_{\iota_r}\ar[r]&0\ar[d]\\
 \Bbw_r(i,j-1)\ar[r]&\Zzw_r(i+r-1,j+r-2) } \]
is a pushout diagram.
\end{lem}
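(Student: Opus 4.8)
The plan is to exploit that $\bcpx$ is an abelian category, so that the pushout of the span $0\leftarrow\Zzw_r(i,j)\xrightarrow{\iota_r}\Bbw_r(i,j-1)$ is nothing but the cokernel of $\iota_r$. It therefore suffices to construct an isomorphism of bicomplexes $\Coker(\iota_r)\cong\Zzw_r(i+r-1,j+r-2)$, in analogy with the filtered computation of Lemma~\ref{L:fpushout}. The first observation I would record is that $\iota_r$ is a monomorphism: at every bidegree its component is either the identity $\kk\to\kk$ or the diagonal $\vmat{1}{1}\colon\kk\to\kk\oplus\kk$, and both are injective. Hence the cokernel is computed bidegree by bidegree, and I only need to track which copies of $\kk$ survive and what differentials they inherit.

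For the bookkeeping I would use the defining splitting $\Bbw_r(i,j-1)=\Zzw_{r-1}(i-1,j-1)\oplus\DD_0(i,j-1)\oplus\Zzw_{r-1}(i+r-1,j+r-2)$ together with the explicit positions of the two staircases and the disc. Comparing these with the positions of $\Zzw_r(i,j)$, one checks that $\iota_r$ carries every copy of $\kk$ in the summand $\Zzw_{r-1}(i-1,j-1)$ (apart from its top corner) and the corner $(i-1,j)$ of the disc isomorphically onto the matching copy, so that all of these die in the cokernel; that at the two diagonal bidegrees $(i,j)$ and $(i-1,j-1)$, where $\iota_r$ equals the diagonal map, the group $\kk\oplus\kk$ collapses to the cokernel of the diagonal $\kk\to\kk\oplus\kk$, which is isomorphic to $\kk$; and that the copies of $\kk$ coming from $\Zzw_{r-1}(i+r-1,j+r-2)$ (away from $(i,j)$) and from the corner $(i,j-1)$ of the disc are untouched and survive. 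A direct count yields exactly $2r$ surviving bidegrees, and these occupy precisely the positions of the staircase $\Zzw_r(i+r-1,j+r-2)$.

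It then remains to match the differentials. Away from the gluing locus this is immediate, since the surviving differentials are inherited unchanged from the summand $\Zzw_{r-1}(i+r-1,j+r-2)$. I expect the main difficulty to lie at the two diagonal bidegrees $(i,j)$ and $(i-1,j-1)$: there one must verify that, after quotienting by the image of the diagonal, the identity differentials of $\DD_0(i,j-1)$ glue correctly to the bottom step of $\Zzw_{r-1}(i+r-1,j+r-2)$ and to the new corner $(i,j-1)$, reproducing up to sign the identity maps $d_0$ and $d_1$ that form the bottom step of $\Zzw_r(i+r-1,j+r-2)$. Unwinding the matrix descriptions of $\iota_r$ and of the disc makes this precise. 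Finally, the base case $r=1$, where $\Bbw_1(i,j-1)=\DD_0(i,j-1)$ has no staircase summands, should be verified directly by the same degreewise analysis.
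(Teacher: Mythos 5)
Your proof is correct, and it is worth saying that the paper itself gives no argument here: Lemmas \ref{lemretr} and \ref{lempush} are simply asserted to follow ``directly from the definitions,'' so your cokernel computation is a legitimate way of filling in that inspection. Since the pushout is taken along $\Zzw_r(i,j)\to 0$, it is indeed just $\Coker(\iota_r)$, computed bidegree-wise, and your bookkeeping of which copies of $\kk$ die, collapse, or survive is accurate; the count of $2r$ surviving bidegrees and their positions both check out. The point you flag as ``up to sign'' does need one final line, and it genuinely works: at bidegree $(i,j)$ the two induced differentials land on the classes of $(1,0)$ and $(0,1)$ in $(\kk\oplus\kk)/\Delta$, which differ by a sign, but choosing as generator of each quotient the class of the staircase component and rescaling the generator at $(i,j-1)$ by $-1$ turns every induced differential into the identity, so the cokernel is honestly isomorphic to $\Zzw_r(i+r-1,j+r-2)$ and not merely isomorphic up to signs. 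This sign is not an artifact of your method: it reappears in the paper's own use of the lemma in Proposition \ref{trivfib_r}, where $\Ker\, w_{r+1}(B)$ is identified with $ZW_{r+1}(B)$ via $(b_0,\ldots,b_{r-1};a;c_0,0,\ldots,0)\mapsto(b_0,\ldots,b_{r-1},-a)$. That identification also suggests the slicker route closest to how the paper actually exploits the lemma: by Lemma \ref{reval}, morphisms $\Bbw_r(i,j-1)\to A$ whose precomposition with $\iota_r$ vanishes are exactly the elements of $\Ker\, w_r(A)$ in the relevant bidegree, and the displayed correspondence identifies these naturally with $ZW_r^{i+r-1,j+r-2}(A)=\bcpx(\Zzw_r(i+r-1,j+r-2),A)$; by Yoneda this is precisely the universal property of the pushout, with no cokernel computation needed. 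Your approach buys an explicit description of the quotient object; the representability approach buys naturality and hands you the element-level correspondence used later in the model structure arguments.
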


\begin{rmk}
 Note that for all $r\geq 1$, the $E_{r}$-term of the (column) spectral sequence 
 associated to the bicomplex $\Zzw_r(i,j)$
 is the $r$-bigraded complex given by 
 \[E_{r}(\Zzw_r(i,j)): \xymatrix{\mathbf{R}^{i-r,j+1-r}&&\ar[ll]_-{\delta_{r}=1}\mathbf{R}^{i,j}}.\]
 Therefore we have $E_{r+1}(\Zzw_r(i,j))=0$.
 Note  that $E_1(\DD_0(i,j))=0$ so that for $r\geq 0$,
 this gives $E_{r}(\Bbw_r(i,j))=0$.
\end{rmk}

The following lemma is a direct consequence of the definitions of $ZW_r$ and $BW_r$.

\begin{lem}\label{reval}Let $r\geq 0$ and let $(i,j)\in\ZZ\times\ZZ$.
\begin{enumerate}
\item \label{ld0} Giving a morphism of bicomplexes $\DD_0(i,j)\to A$ is equivalent to giving an element
$a$ in $A\bgd{i}{j}$.
\item \label{ls} Giving a morphism of bicomplexes $\Zzw_r(i,j)\to A$  is equivalent to giving an element in $ZW_{r}\bgd{i}{j}(A)$.
\item \label{ldr} Giving a morphism of bicomplexes $\Bbw_r(i,j)\to A$  is equivalent to giving an element in $BW_{r}\bgd{i}{j}(A)$.
\end{enumerate}
Under these correspondences, for $r\geq 1$, the map $\iota_r:\Zzw_r(i,j)\rightarrow \Bbw_r(i,j-1)$ corresponds to the map $w_{r}:BW_{r}\bgd{i}{j-1}(A)\rightarrow ZW_{r}\bgd{i}{j}(A)$ so that a commutative diagram
of the form
  \[\label{ldiag}\xymatrix{
 \Zzw_r(i,j)\ar[d]_{\iota_r}\ar[r]&A\ar[d]^f\\
 \Bbw_r(i,j-1)\ar[r]_{}&B
 }\]
 corresponds to a pair $(a,b), a\in ZW_{r}(A)^{i,j}, b\in BW_{r}^{i,j-1}(B)$ such that $f(a)=w_{r}(b)$.
\end{lem}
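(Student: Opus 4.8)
The plan is to exploit that each representing object is assembled from copies of $\kk$ placed in various bidegrees with identity structure maps, so that a morphism out of it into a bicomplex $A$ is freely determined by the images of its generators, subject only to commutation with $d_0$ and $d_1$. For assertion (1) I would observe that $\DD_0(i,j)$ is the \emph{free} bicomplex on a single generator $e$ in bidegree $(i,j)$: its four copies of $\kk$ are $e$, $d_0e$, $d_1e$ and $d_0d_1e=d_1d_0e$, all structure maps being the identity. Thus a morphism $\DD_0(i,j)\to A$ is the same as a choice of image $a\in A\bgd{i}{j}$ of $e$, the other three images being forced; this yields $\bcpx(\DD_0(i,j),A)\cong A\bgd{i}{j}$, which is also assertion (2) for $r=0$.

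For assertion (2) with $r\geq 1$ I would single out the generators $a_0,\dots,a_{r-1}$ of $\Zzw_r(i,j)$ lying in bidegrees $(i-k,j-k)$, the remaining generators being their images $d_1a_k$. Reading the differentials off Definition $\ref{r-sphere}$ shows that the only relations imposed in $\Zzw_r(i,j)$ are $d_0a_0=0$ together with $d_0a_k=d_1a_{k-1}$ for $1\leq k\leq r-1$. Hence a morphism $\Zzw_r(i,j)\to A$ amounts to a tuple $(\alpha_0,\dots,\alpha_{r-1})$ of images with $\alpha_k\in A\bgd{i-k}{j-k}$, and commutation with the differentials is precisely the condition $(\alpha_0,\dots,\alpha_{r-1})\in ZW_r\bgd{i}{j}(A)$.

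Assertion (3) is then formal. Since $\bcpx$ is abelian, the defining decomposition $\Bbw_r(i,j-1)=\Zzw_{r-1}(i-1,j-1)\oplus\DD_0(i,j-1)\oplus\Zzw_{r-1}(i+r-1,j+r-2)$ is a coproduct, so $\bcpx(\Bbw_r(i,j-1),A)$ is the product of the three Hom-sets out of the summands; applying (1) and (2) and comparing with the decomposition of $BW_r\bgd{i}{j-1}(A)$ in Remark $\ref{R:functorwitness}$ gives the bijection (the case $r=1$ being $\DD_0$ again). To identify $\iota_r$ with $w_r$ I would fix $g\colon\Bbw_r(i,j-1)\to A$ classifying $b=(b_0,\dots,b_{r-2};a;c_0,\dots,c_{r-2})$ and compute the tuple classifying $g\circ\iota_r$ by pushing each top generator of $\Zzw_r(i,j)$ through the matrix description of $\iota_r$ and then through $g$. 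The two entries $\vmat{1}{1}$, in bidegrees $(i,j)$ and $(i-1,j-1)$, split the corresponding generators across the $\DD_0(i,j-1)$ summand and the two copies of $\Zzw_{r-1}$, producing the components $d_0a+d_1b_{r-2}$ and $d_1a+c_0$, while the identity entries reproduce $c_1,\dots,c_{r-2}$; this is exactly $w_r(b)$. The diagram correspondence is then immediate, since commutativity of the square says precisely $f(a)=w_r(b)$.

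The only real obstacle is the bookkeeping in this last step: one must correctly locate, among the summands of $\Bbw_r(i,j-1)$, the copies of $\kk$ in bidegrees $(i,j)$ and $(i-1,j-1)$ hit by the two $\vmat{1}{1}$ entries, and check that the identity entries of $\iota_r$ line up with the remaining components $c_1,\dots,c_{r-2}$ of the formula for $w_r$. Everything else reduces to the universal properties of free bigraded $\kk$-modules and of finite coproducts in the abelian category $\bcpx$.
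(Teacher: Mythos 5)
Your proof is correct and takes essentially the same approach as the paper, which states this lemma without proof as a direct consequence of the definitions: one unwinds what a bidegree-preserving map out of these freely generated objects must do on generators to commute with $d_0$ and $d_1$, obtaining exactly the defining conditions of $ZW_r$ and $BW_r$. Your generator-by-generator bookkeeping for $\iota_r$ (the two entries $\vmat{1}{1}$ producing $d_0a+d_1b_{r-2}$ and $d_1a+c_0$, the identity entries reproducing $c_1,\dots,c_{r-2}$) is precisely the verification implicit in the paper's identification of $\iota_r$ with $w_r$.
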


\begin{rmk}
All of these statements can be made functorial, so that, for example $ZW_r(i,j)=\bcpx(\Zzw_r(i,j),-)$.
\end{rmk}

\subsection{$r$-cylinders and $r$-cones}We collect some homological 
algebra constructions for bicomplexes, leading to a notion of $r$-homotopy.

\begin{defi}
For $r=0$, we define the $0$-\textit{cylinder} $\cyl{0}$ as the bicomplex
\[\xymatrix{
(R\oplus R)^{0,0}\\
 R^{0,-1}\ar[u]_{{\vmat{-1}{1}}}.\\
 }\] 

 For $r\geq 1$, define the $r$-\textit{cylinder} $\cylr$ as the bicomplex whose underlying bigraded module is $\Zzw_{r}(r,r-1)\oplus \kk\bgd{0}{0}$ and whose differentials coincide 
with those of $\Zzw_{r}(r,r-1)$ except for
\[d_1\bgd{1}{0}: \xymatrix{
\kk\bgd{0}{0}\oplus \Zzw_{r}(r,r-1)=(\kk\oplus\kk)\bgd{0}{0}&&\kk\bgd{1}{0}\ar[ll]_-{\vmat{-1}{1}}}.\] 
For all $r\geq 0$, the morphisms of $\kk$-modules
\[\xymatrix{\kk\oplus\kk \ar[r]^-{id}&\kk\oplus\kk \ar[r]^-{\hmat{1}{1}}&\kk}\]
induce morphisms of bicomplexes
\[\xymatrix{  (\kk\oplus\kk)\bgd{0}{0} \ar[r]^-{i}&\cylr \ar[r]^-{p}&\kk\bgd{0}{0}}\]
giving a factorization of the fold map.
\end{defi}

\begin{example}
The $1$-cylinder $\cyl{1}$ is the bicomplex given by
\[
	(\kk\oplus\kk)\bgd{0}{0}\stackrel{\vmat{-1}{1}}{\longleftarrow}\kk\bgd{1}{0}.
\]
The $2$-cylinder $\cyl{2}$ is given by
\begin{center}
\begin{tikzpicture}
scale=0.15
  \matrix (m) [matrix of math nodes, row sep=2em,
    column sep=2em]{
&  \kk^{1,1} &\kk^{2,1} \\
(\kk\oplus\kk)^{0,0} &\kk^{1,0}& \\};
  \path[-stealth]
     (m-1-3) edge  node[midway,above] {$\idmat$} (m-1-2) 
    (m-2-2) edge   node[midway,right] {$\idmat$} (m-1-2) edge   node[midway,above] {$\vmat{-1}{1}$} (m-2-1);
\end{tikzpicture}
\end{center}
\end{example}

An easy inspection shows that, for all $r\geq 0$, $E_r(\cylr)$ is the $r$-bigraded complex 
\[d_r:\kk^{r,r-1}\rightarrow (\kk\oplus\kk)^{0,0}\text{ with }d_r=\vmat{-1}{1}.\]

\begin{notation} For the sequel, for $r\geq 1$, we will denote by $e_-, e_+$ generators of $(\cylr)^{0,0}$, by $e_{i,i}$, 
for $1\leq i\leq r-1$, and $e_{i,i-1}$, for $1\leq i\leq r$, generators of $(\cylr)^{i,i}$ and
${\cylr}^{i,i-1}$ respectively,  so that:

\begin{tabular}{ll}
$d_0(e_{i,i})=d_1 (e_{i,i})=0$,&\\
$d_0(e_{i,i-1})=d_1(e_{i+1,i})=e_{i,i}$,& for $1\leq i\leq r-1$, \\
$d_0(e_{r,r-1})=0$, &\\
 $d_1(e_{1,0})=e_+-e_-$.
\end{tabular}
\end{notation}

\begin{defi}  For $A$ a bicomplex, the \textit{$r$-cylinder} of $A$ is the bicomplex $\cylr(A):=\cylr\otimes A$.
We denote by 
\[i:A\oplus A\rightarrow \cylr(A)\text{ and }p:\cylr(A)\rightarrow A\]
the maps induced by $i$ and $p$ defined on $\cylr$. We have that $p i$
is the fold map. We denote by $i_-,i_+:A\rightarrow \cylr(A)$ the maps obtained from $i$ by composing with the injection on the first (second) component.
\end{defi}

\begin{defi}

Let $f,g:A\to B$ be two morphisms of bicomplexes and let $r\geq 0$.
An \textit{$r$-homotopy from $f$ to $g$} is given by a morphism of bicomplexes $h:\cylr(A)\to B$ such that 
$hi=f\oplus g$.
We use the notation $h:f\simr{r} g$.
\end{defi}

\begin{rmk} For $r\geq 1$,
we have 

\[\  \cylr(A)\bgd{p}{q}=(\kk e_-\otimes A\bgd{p}{q})\oplus\bigoplus_{i=1}^{r-1} (\kk e_{i,i}\otimes A\bgd{p-i}{q-i})\oplus\bigoplus_{i=1}^r  (\kk e_{i,i-1}\otimes
A\bgd{p-i}{q+1-i})\oplus (\kk e_+\otimes  A\bgd{p}{q}).
\]
Suppressing the explicit generators of the free $R$-modules of rank $1$, we write this as
\[\  \cylr(A)\bgd{p}{q}=A\bgd{p}{q}\oplus\bigoplus_{i=1}^{r-1} A\bgd{p-i}{q-i}\oplus\bigoplus_{i=1}^r  A\bgd{p-i}{q+1-i}\oplus A\bgd{p}{q}
\]
and we write an element in $\cylr(A)$ as $(a_0,(a_i)_{1\leq i\leq r-1},(b_i)_{1\leq i\leq r}, b_0)$.
With this notation, we have
\[d_0(a_0,(a_i)_i,(b_i)_i,b_0)=(d_0a_0,((-1)^id_0a_i+b_i)_i,((-1)^{i-1} d_0b_i)_i,d_0b_0)\]
and
\[d_1(a_0,(a_i)_i,(b_i)_i,b_0)=(d_1a_0-b_1,((-1)^id_1a_i+b_{i+1})_i,((-1)^{i} d_1b_i)_i,d_1b_0+b_1).\]
\end{rmk}

\begin{prop}\label{P:rhomotopy} 
Let $f,g:A\to B$ be two morphisms of bicomplexes such that $f\simr{r}g$. Then
 $E_{r+1}(f)=E_{r+1}(g)$.
\end{prop}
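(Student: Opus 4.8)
The plan is to reduce the statement to the already-established filtered version, namely the Cartan--Eilenberg proposition for filtered complexes recalled earlier, by showing that an $r$-homotopy of bicomplexes $h\colon\cylr(A)\to B$ induces a filtered $r$-homotopy between the total complexes $\Tot(f)$ and $\Tot(g)$. Since the spectral sequence of a bicomplex is by definition that of its total complex with the column filtration, the conclusion $E_{r+1}(f)=E_{r+1}(g)$ follows at once from $E_{r+1}(\Tot f)=E_{r+1}(\Tot g)$. Recall that $hi=f\oplus g$ means exactly $\Tot(h)\circ i_-=\Tot(f)$ and $\Tot(h)\circ i_+=\Tot(g)$, so it suffices to produce a degree $-1$ map $H\colon\Tot(A)\to\Tot(B)$ with $dH+Hd=\Tot(f)-\Tot(g)$ and $H(F_p)\subseteq F_{p+r}$.

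First I would construct an ``integrating chain'' $\sigma\in\Tot(\cylr)$ of total degree $-1$, playing the role of the fundamental class of the interval. For $r\ge 1$ I take $\sigma=\sum_{i=1}^{r}e_{i,i-1}$, and for $r=0$ the bottom generator of $\cyl{0}$ in bidegree $(0,-1)$; in either case $\sigma$ has finite support. Using the relations $d_0(e_{i,i-1})=e_{i,i}$ for $1\le i\le r-1$, $d_1(e_{i,i-1})=e_{i-1,i-1}$ for $2\le i\le r$, $d_0(e_{r,r-1})=0$ and $d_1(e_{1,0})=e_+-e_-$ from the Notation, together with the sign rule for the total differential, a telescoping computation in which all the diagonal generators $e_{i,i}$ cancel in pairs yields
\[
d\sigma=e_--e_+ .
\]

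Next I would set $s\colon\Tot(A)\to\Tot(\cylr(A))$, $s(a)=\sigma\otimes a$, and define $H:=\Tot(h)\circ s$. Since $\sigma$ is supported in columns $1,\dots,r$ (resp. column $0$ when $r=0$), the map $s$, and hence $H$, raises the column filtration by at most $r$, which is precisely the filtration bound demanded of a filtered $r$-homotopy. The Koszul sign rule gives $d(\sigma\otimes a)=(d\sigma)\otimes a-\sigma\otimes da=(e_--e_+)\otimes a-s(da)$, that is $ds+sd=i_--i_+$. Applying the chain map $\Tot(h)$ produces $dH+Hd=\Tot(h)(i_--i_+)=\Tot(f)-\Tot(g)$, so $-H$ exhibits $\Tot(f)\simr{r}\Tot(g)$ as filtered complexes. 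The filtered proposition then gives $E_{r+1}(\Tot f)=E_{r+1}(\Tot g)$, i.e. $E_{r+1}(f)=E_{r+1}(g)$.

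The main obstacle is the careful sign and index bookkeeping in the two displayed identities: verifying the telescoping $d\sigma=e_--e_+$ with the correct total-complex signs, and confirming that the Koszul sign in $d(\sigma\otimes a)$ yields $ds+sd=i_--i_+$ rather than a sign-corrupted variant. One must also handle $r=0$ separately, since $\cyl{0}$ is defined by hand rather than as a staircase, and check that the column support of $\sigma$ delivers exactly the bound $H(F_p)\subseteq F_{p+r}$ needed to invoke the filtered result. A direct alternative, bypassing $\Tot$, would be to show $i_+(a_0)-i_-(a_0)\in B_{r+1}(\cylr(A))$ for each $a_0\in Z_{r+1}(A)$ by spelling out an explicit $(r+1)$-boundary witness chain built from $\sigma$ and the witnesses of $a_0$; this is the same computation reorganized bidegree-by-bidegree and is noticeably more laborious.
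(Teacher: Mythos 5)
Your route is genuinely different from the paper's proof: the paper decomposes $h:\cylr(A)\to B$ into components $h_i:A\to B$ of bidegree $(i,i-1)$, reindexes them, and quotes the corresponding homotopy-invariance statement for twisted complexes (Propositions 3.18 and 3.24 of \cite{CELW}); you instead push everything through $\Tot$ and quote the Cartan--Eilenberg proposition for filtered complexes recorded earlier in the paper. The reduction itself is legitimate, since $E_r$ of a bicomplex is by definition $E_r$ of its total complex with the column filtration, and your filtration bound is correct: $\sigma$ is supported in columns $1,\dots,r$, so $H=\Tot(h)\circ s$ raises the column filtration by at most $r$, which is exactly the paper's definition of a filtered $r$-homotopy.

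However, the pivotal identity $ds+sd=i_--i_+$ fails for $s(a)=\sigma\otimes a$ as written; this is precisely the ``sign-corrupted variant'' you flagged as the risk. The paper's conventions (encoded in the Remark giving $d_0,d_1$ on $\cylr(A)$) are $d_0(x\otimes a)=d_0x\otimes a+(-1)^{q_x}\,x\otimes d_0a$ and $d_1(x\otimes a)=d_1x\otimes a+(-1)^{p_x}\,x\otimes d_1a$ for a generator $x$ of bidegree $(p_x,q_x)$: the signs are governed by the row and column indices of $x$, not by its total degree, and the total differential weights $d_1$ by $(-1)^n$ where $n$ is the total degree of $\sigma\otimes a$, namely $|a|-1$, not $|\sigma|=-1$. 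So no Koszul rule holds for the naive map $a\mapsto\sigma\otimes a$. Concretely, the terms $e_{1,0}\otimes d_0a$ appear in $ds(a)$ and in $s(da)$ both with coefficient $+1$ (the relevant sign is $(-1)^{q_x}$ with $q_x=0$ for $x=e_{1,0}$), so $(ds+sd)(a)$ contains $2\,e_{1,0}\otimes d_0a$; likewise, for $r\geq 2$ and $|a|$ odd the diagonal terms $e_{i,i}\otimes a$ survive with coefficient $1+(-1)^{|a|-1}=2$. Over $\kk=\ZZ$ this is not $i_--i_+$. The repair is to insert degree-dependent signs: with
\[ s(a):=\sum_{i=1}^{r}(-1)^{i|a|}\,e_{i,i-1}\otimes a \]
all unwanted terms cancel and one checks $ds+sd=i_--i_+$ exactly (equivalently, the identity map $\Tot(\cylr)\otimes\Tot(A)\to\Tot(\cylr\otimes A)$ is not a chain map and must be twisted by $(-1)^{p_x|a|}$, and $\sigma\otimes(-)$ has to be transported through that twist). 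With this correction, $-\Tot(h)\circ s$ is a filtered $r$-homotopy from $\Tot(f)$ to $\Tot(g)$, the case $r=0$ works as you wrote it (the twist is trivial there), and the rest of your argument, including the appeal to \cite{CaEil}, goes through.
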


\begin{proof} Let $f,g:(A,d_0,d_1)\to (B,d_0,d_1)$ be two morphisms of bicomplexes.
We consider first the case $r=0$. A $0$-homotopy from $f$ to $g$ corresponds to a morphism of
bigraded modules $h:A\to B$ of bidegree $(0,-1)$ such that $d_0h+hd_0=g-f$
and  $-d_1h+hd_1=0$. In particular, 
 this is a homotopy with respect
to the differential $d_0$. So $E_1(f)=E_1(g)$. 

Now let $r\geq 1$.
An $r$-homotopy $h:\cylr(A)\rightarrow B$ from $f$ to $g$   associates to $(a_0,(a_i)_i,(b_i)_i,b_0)$ the element
\[f(a_0)+\sum_{i=1}^{r-1} k_i(a_i)+\sum_{i=1}^r h_i(b_i)+g(b_0),\]
where $h_i:A\rightarrow B$ has bidegree $(i,i-1)$ and $k_i:A\rightarrow B$ has bidegree $(i,i)$. Writing the conditions satisfied by $h$ to be a morphism of bicomplexes, we get that 
\[k_i=d_0h_i+(-1)^ih_id_0=d_1h_{i+1}+(-1)^i h_{i+1} d_1,\quad \text{for } 1\leq i\leq r-1\]
and that
\[ d_0h_r=(-1)^{r+1}h_rd_0 \text{ and }d_1h_1+h_1d_1=g-f. \]
Hence this amounts to having a collection of morphisms of bigraded modules $h_i:A\to B$ of
bidegree $(i,i-1)$,  with $1\leq i\leq r$, such that,
\begin{align*}
d_1h_{i+1}+(-1)^{i}h_{i+1}d_1&=d_0h_i+(-1)^ih_id_0,\qquad\text{for } 1\leq i\leq r-1,\\
0&=(-1)^rd_0h_r+h_rd_0,\\
d_1h_1+h_1d_1&=g-f. 
\end{align*}

By setting $\widehat h_m=(-1)^{r+1} h_{r-m}$ for $0\leq m\leq r-1$ and $\widehat h_m=0$ for $m\geq r$ we get that the collection of morphisms $\widehat h_m:A\rightarrow B$
of bidegree $(r-m,r-1-m)$ satisfies 
for all $m\geq 0$
\begin{equation}
\sum_{i+j=m} (-1)^{i+r}d_i\widehat h_j+(-1)^i\widehat h_id_j=
\left\{ 
\begin{array}{ll}
0&\text{ if }m<r,\\
g_{m-r}-f_{m-r}&\text{ if }m\geq r,
\end{array}\right.\tag{$H_{m1}$}
\end{equation}
where $f_i=\begin{cases}
f&\text{if $i=0$,}\\
0&\text{otherwise,}
\end{cases}$ and similarly for $g$, and $d_i=0$ for $i\neq 0,1$.

This amounts to saying that the collection $(\widehat h_m)_m$ is an $r$-homotopy  (of twisted complexes) from $f$ to $g$ as proven in~\cite[Proposition 3.18]{CELW}. Hence $E_{r+1}(f)=E_{r+1}(g)$ follows from Proposition 3.24 
of~\cite{CELW}.
\end{proof}

\begin{rmk}
It follows from the explicit description of $r$-homotopies in the proof above that $\simr{r}$ is an equivalence relation.
\end{rmk}

\begin{defi} Let $f:A\rightarrow B$ and $g:A\rightarrow C$ be two morphisms of bicomplexes. The \textit{double mapping $r$-cylinder} $\cylr(f,g)$ is the bicomplex 
obtained as the pushout of the diagram
\[\xymatrix{ & A\ar[dl]_{f}\ar[dr]^{i_-} & & A\ar[dl]_{i_+}\ar[dr]^{g}\\
B& &\cylr(A) & & C}\]
\end{defi}

\begin{prop}
For $r=0$,  the bicomplex $\cyl{0}(f,g)$ is described as
\[
 \cyl{0}(f,g)\bgd{p}{q}=B\bgd{p}{q}\oplus A\bgd{p}{q+1}\oplus C\bgd{p}{q},
\]
with
\[d_0(\beta,a,\gamma)=(d_0\beta-f(a), - d_0a, d_0\gamma+g(a))\]
and
\[d_1(\beta,a,\gamma)=(d_1\beta, d_1a, d_1\gamma).\]

 For $r\geq 1$, the bicomplex $\cylr(f,g)$ is described as

\[\  \cylr(f,g)\bgd{p}{q}=B\bgd{p}{q}\oplus\bigoplus_{i=1}^{r-1} A\bgd{p-i}{q-i}\oplus\bigoplus_{i=1}^r  A\bgd{p-i}{q+1-i}\oplus C\bgd{p}{q},
\]
with
\[d_0(\beta,(a_i)_i,(b_i)_i,\gamma)=(d_0\beta,((-1)^id_0a_i+b_i)_i,((-1)^{i-1} d_0b_i)_i,d_0\gamma)\]
and
\[d_1(\beta,(a_i)_i,(b_i)_i,\gamma)=(d_1\beta-f(b_1),((-1)^id_1a_i+b_{i+1})_i,((-1)^{i} d_1b_i)_i,d_1\gamma+g(b_1)).\]

 \end{prop}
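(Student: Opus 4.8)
The plan is to compute the pushout directly, exploiting that $\bcpx$ is abelian so that the colimit defining $\cylr(f,g)$ is computed on underlying bigraded modules, with the differentials then induced. First I would realize $\cylr(f,g)$ as the colimit of the displayed five-term diagram, which in an abelian category is the cokernel of a suitable map $A\oplus A\to B\oplus \cylr(A)\oplus C$ whose effect is to identify $f(a)$ with $i_-(a)$ and $g(a)$ with $i_+(a)$. The crucial observation is that, with the notation $(a_0,(a_i)_i,(b_i)_i,b_0)$ for elements of $\cylr(A)$, the maps $i_-$ and $i_+$ are exactly the inclusions of the two distinct direct summands corresponding to $e_-$ and $e_+$, that is, the $a_0$-slot and the $b_0$-slot.

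Since these are inclusions of distinct direct summands, passing to the cokernel simply replaces the $a_0$-copy of $A$ by $B$ along $f$ and the $b_0$-copy by $C$ along $g$, leaving the middle summands untouched. This immediately yields the stated underlying bigraded module in both cases: for $r=0$ the middle is the single summand $A\bgd{p}{q+1}$ coming from $\cyl{0}^{0,-1}$, while for $r\geq 1$ it is $\bigoplus_{i=1}^{r-1}A\bgd{p-i}{q-i}\oplus\bigoplus_{i=1}^{r}A\bgd{p-i}{q+1-i}$. Concretely, the structure map $\iota:\cylr(A)\to \cylr(f,g)$ becomes $(a_0,(a_i)_i,(b_i)_i,b_0)\mapsto (f(a_0),(a_i)_i,(b_i)_i,g(b_0))$, while the structure maps from $B$ and $C$ are the evident inclusions.

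Next I would read off the differentials by pushing the explicit formulas for $d_0$ and $d_1$ on $\cylr(A)$ (recorded in the Remark preceding this statement) through $\iota$. Because $\iota$ and the inclusions of $B$ and $C$ are morphisms of bicomplexes, it suffices to evaluate $d$ on a middle element $(0,(a_i)_i,(b_i)_i,0)$ and then add the contributions $d_0\beta,d_1\beta$ and $d_0\gamma,d_1\gamma$ coming from $B$ and $C$. The one point requiring care is that $\iota$ carries the $a_0$-component of a differential into $B$ via $f$ and the $b_0$-component into $C$ via $g$. Applying the $d_0$-formula to $(0,(a_i)_i,(b_i)_i,0)$ gives vanishing $a_0$- and $b_0$-components, so $d_0$ is ``diagonal'' and produces exactly the claimed formula; applying the $d_1$-formula yields $a_0$-component $-b_1$ and $b_0$-component $+b_1$, which $\iota$ sends to $-f(b_1)\in B$ and $g(b_1)\in C$, giving the terms $d_1\beta-f(b_1)$ and $d_1\gamma+g(b_1)$.

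Finally, the case $r=0$ is handled by the same argument applied to the explicit bicomplex $\cyl{0}$; the only difference is that the connecting map $\vmat{-1}{1}$ of $\cyl{0}$ is now a $d_0$ rather than a $d_1$, so the leakage terms $-f(a)$ and $+g(a)$ appear in $d_0$ while $d_1$ is diagonal, matching the stated formulas. I expect the only genuine obstacle to lie in the sign bookkeeping of this last step, namely correctly tracking which components of the cylinder differentials are transported into $B$ and $C$ by the gluing maps $f$ and $g$; the relation $d_0d_1=d_1d_0$ is automatic from the pushout but is worth confirming against the explicit formulas as a sanity check.
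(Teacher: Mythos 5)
Your proposal is correct and follows essentially the same route as the paper, whose proof is the one-line observation that the result "is a consequence of the description of the bicomplex $\cylr(A)$"; you have simply spelled out that computation (pushout along the split inclusions $i_\pm$ of the $e_-\otimes A$ and $e_+\otimes A$ summands, replacing them by $B$ and $C$, then transporting the differentials through the structure map). Your sign bookkeeping for the $d_1$-leakage terms $-f(b_1)$, $+g(b_1)$ (and, for $r=0$, the $d_0$-leakage $-f(a)$, $+g(a)$) matches the stated formulas.
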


\begin{proof} This is a consequence of the description of the bicomplex $\cylr(A)$.
\end{proof}

\begin{defi} Let $f:A\rightarrow B$ be a morphism of bicomplexes. For $r\geq 0$, the \textit{mapping $r$-cone} of $f$ is the object 
$\cylr(0,f)$, where $0:A\to 0$.
The \textit{$r$-cone} of a bicomplex $A$ is $\cylr(0,id_A)$ and is denoted $C_r(A)$.
\end{defi}

\begin{rmk}
\label{rmk:cone_description}
For $r=0$,   if $A$ is the bicomplex $\kk\bgd{0}{0}$ then $C_0(A)$ is the bicomplex
\[\xymatrix{
R^{0,0}\\
 R^{0,-1}\ar[u]_{1}.\\
 }\]
and moreover for any bicomplex $A$ one has 
$C_0(A)=C_0(\kk\bgd{0}{0})\otimes A$.
Note that for every $p\in\ZZ$, $C_0(A)^{p,*}$ is the usual cone of the cochain complex $(A^{p,*},d_0)$.

For $r\geq 1$,  if $A$ is the bicomplex $\kk\bgd{0}{0}$ then $C_r(A)=\Zzw_{r}(r,r-1)$ and moreover for any bicomplex $A$ one has 
$C_r(A)=\Zzw_{r}(r,r-1)\otimes A$.

Denoting by $s_rA$ the bicomplex $\kk e_{r,r-1}\otimes A$ and projecting onto that component we get a morphism of bicomplexes 
\[ \phi_r: C_r(A)\rightarrow s_rA.\]
Explicitly,  with the notation above for $r\geq 1$, $\phi_r((a_i)_i, (b_i)_i, \gamma)=b_r$.

\end{rmk}

\begin{defi} A bicomplex  $A$ is {\it $r$-contractible} if $id_A\simr{r} 0$.
\end{defi}

\begin{prop} For $r\geq 0$ the $r$-cone of a bicomplex is $r$-contractible.
\end{prop}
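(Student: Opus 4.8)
The plan is to reduce to the unit bicomplex and then exhibit an explicit contracting $r$-homotopy. By Remark~\ref{rmk:cone_description} we have $C_r(A)=\Zzw_r(r,r-1)\otimes A$ for $r\geq 1$ and $C_0(A)=C_0(\kk\bgd{0}{0})\otimes A$; writing $D_r:=C_r(\kk\bgd{0}{0})$, in all cases $C_r(A)=D_r\otimes A$. First I would observe that $r$-contractibility passes through the functor $-\otimes A$. Since $\cylr(X)=\cylr\otimes X$ and the inclusion $i\colon X\oplus X\to\cylr(X)$ is $i_{\cylr}\otimes\mathrm{id}_X$, associativity of the tensor product in the symmetric monoidal category $\bcpx$ gives $\cylr(D_r\otimes A)=\cylr(D_r)\otimes A$ with $i_{D_r\otimes A}=i_{D_r}\otimes\mathrm{id}_A$. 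Hence if $H\colon\cylr(D_r)\to D_r$ is an $r$-homotopy with $H\,i_{D_r}=\mathrm{id}_{D_r}\oplus 0$, then $H\otimes\mathrm{id}_A$ is a morphism of bicomplexes with $(H\otimes\mathrm{id}_A)\,i_{D_r\otimes A}=\mathrm{id}_{D_r\otimes A}\oplus 0$, so $C_r(A)$ is $r$-contractible. This reduces the statement to showing that $D_r$ is $r$-contractible.

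For $r=0$, the bicomplex $D_0=C_0(\kk\bgd{0}{0})$ is the single-column two-term complex $\kk\bgd{0}{-1}\xrightarrow{1}\kk\bgd{0}{0}$, and a $0$-homotopy is precisely a $d_0$-homotopy commuting with $d_1$ (the $r=0$ case in the proof of Proposition~\ref{P:rhomotopy}); the standard contraction of the cone of $\mathrm{id}_{\kk}$ then gives $\mathrm{id}_{D_0}\simr{0}0$.

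For $r\geq 1$ I would use the translation of an $r$-homotopy into explicit data established in the proof of Proposition~\ref{P:rhomotopy}: an $r$-homotopy $\mathrm{id}_{D_r}\simr{r}0$ is the same as a family of bidegree $(i,i-1)$ maps $h_1,\dots,h_r\colon D_r\to D_r$ satisfying
\begin{align*}
d_1 h_1 + h_1 d_1 &= -\,\mathrm{id},\\
d_0 h_i + (-1)^i h_i d_0 &= d_1 h_{i+1} + (-1)^i h_{i+1} d_1 \quad (1\le i\le r-1),\\
d_0 h_r &= (-1)^{r+1} h_r d_0.
\end{align*}
Writing $x_k$ ($0\le k\le r-1$) for the generator of $D_r=\Zzw_r(r,r-1)$ in bidegree $(r-k,r-1-k)$ and $y_k$ for the generator in bidegree $(r-1-k,r-1-k)$, the differentials are $d_1 x_k=y_k$, $d_0 x_k=y_{k-1}$ (with $d_0 x_0=0$), and $d_0 y_k=d_1 y_k=0$. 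I would then set $h_i(x_k):=0$ and $h_i(y_k):=-\,x_{k+1-i}$ whenever $0\le k+1-i\le r-1$, and $h_i(y_k):=0$ otherwise, which have the correct bidegree. Checking the three displayed relations on the generators $x_k,y_k$ is then a direct computation.

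The conceptual content is entirely in the reduction and the forced shape of the $h_i$ (they must kill the $x$'s and act as an essential inverse to $d_1$ on the $y$'s); the only real work is the sign bookkeeping in verifying the three relations, which I expect to be the main, though routine, obstacle. I would present the reduction as the clean structural step, using the compatibility $i_{D_r\otimes A}=i_{D_r}\otimes\mathrm{id}_A$, and relegate the sign-tracking to the verification on generators.
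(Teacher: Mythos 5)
Your proof is correct and takes essentially the same route as the paper: both reduce to the representing bicomplex via $C_r(A)=\Zzw_r(r,r-1)\otimes A$ (tensoring a contracting homotopy of $\Zzw_r(r,r-1)$ with $\mathrm{id}_A$), and then write down an explicit contraction of $\Zzw_r(r,r-1)$. The only difference is presentational: you encode the homotopy by the component maps $h_i$ and the three relations extracted in the proof of Proposition~\ref{P:rhomotopy}, whereas the paper defines $H$ directly on the generators of $\cylr\otimes\Zzw_r(r,r-1)$; these are equivalent data, and your $h_i$ agree with the paper's $H$ up to the direction of the homotopy ($\mathrm{id}\simeq 0$ versus $0\simeq\mathrm{id}$), which is harmless since the relation is symmetric.
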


\begin{proof}
For the case $r=0$ this follows from the standard statement in cochain complexes by Remark \ref{rmk:cone_description}.
Now let $r\geq 1$.  For the purpose of the proof, we denote by $S$ the bicomplex $\Zzw_{r}(r,r-1)$. We build first an $r$-homotopy $H:\cylr(S)\rightarrow S$ 
from the identity $id_S$ to $0$. The result will then follow for any bicomplex $A$. Indeed  $\cylr(C_r(A))=\cylr\otimes S\otimes A$ so that 
$H\otimes 1_A:\cylr(C_r(A))\rightarrow C_r(A)$
will be a homotopy from the identity of $S\otimes A=C_r(A)$ to $0$. Let us denote by $\beta_{i,i}$ the generators of $S\bgd{i}{i}$ for $0\leq i\leq r-1$ and $\beta_{i,i-1}$
the generators of $S\bgd{i}{i-1}$ for $1\leq i\leq r$. Let $\beta$ be any generator of $S$.
We define $H$ on generators of $\cylr(S)=\cylr\otimes S$ by
\[ H(e_+\otimes \beta)=\beta, \quad H(e_-\otimes \beta)=0,\quad H(e_{k+1,k}\otimes\beta_{i+1,i})=0\]
\begin{align*}
(-1)^kH(e_{k,k}\otimes\beta_{i+1,i})=H(e_{k+1,k}\otimes\beta_{i,i})&=\begin{cases} \beta_{i+k+1,i+k},& \text{ if } i+k+1\leq r \\ 0,& \text{ if\ not},\end{cases}\\
H(e_{k,k}\otimes\beta_{i,i})&=\begin{cases} \beta_{i+k,i+k},& \text{ if } i+k\leq r-1 \\ 0,& \text{ if\ not}.
\end{cases}
\end{align*}
Then it is a matter of computation to check that $H$ is a morphism of bicomplexes and that $H i=id_S\oplus 0_S$.

\end{proof}

\begin{cor}\label{Cracyclic} Let $A$ be a bicomplex and $r\geq 0$. Then $E_{r+1}(C_r(A))=0$.
\end{cor}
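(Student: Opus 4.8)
The plan is to deduce this directly from the immediately preceding proposition, which asserts that the $r$-cone $C_r(A)$ is $r$-contractible, together with Proposition $\ref{P:rhomotopy}$ and the functoriality of $E_{r+1}$ on bicomplexes. The point is that $r$-contractibility is exactly an $r$-homotopy between the identity and the zero self-map, and Proposition $\ref{P:rhomotopy}$ converts such an $r$-homotopy into an equality of induced maps on $E_{r+1}$.

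Concretely, I would first recall that by the preceding proposition we have $\mathrm{id}_{C_r(A)}\simr{r} 0$, where $0$ denotes the zero endomorphism of $C_r(A)$. Applying Proposition $\ref{P:rhomotopy}$ to the pair of maps $f=\mathrm{id}_{C_r(A)}$ and $g=0$ yields $E_{r+1}(\mathrm{id}_{C_r(A)})=E_{r+1}(0)$. Since $E_{r+1}$ is a functor $\bcpx\to\rbc$, the left-hand side is the identity morphism of the $(r+1)$-bigraded complex $E_{r+1}(C_r(A))$, while the right-hand side is the zero morphism. Hence $\mathrm{id}_{E_{r+1}(C_r(A))}=0$, which forces $E_{r+1}(C_r(A))=0$, as desired.

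I do not anticipate any genuine obstacle here: all the work has already been done in establishing that the $r$-cone is $r$-contractible and in proving the homotopy-invariance statement of Proposition $\ref{P:rhomotopy}$. The only points to be careful about are that $0$ really is admissible as one of the two maps in an $r$-homotopy (it is, being an honest morphism of bicomplexes), and that one invokes functoriality of $E_{r+1}$ rather than of $E_r$, so that the comparison is of $(r+1)$-bigraded objects and the vanishing conclusion is the intended one. For the trivial base case $r=0$ the same argument applies verbatim, using the $0$-contractibility of $C_0(A)$.
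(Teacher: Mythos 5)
Your proposal is correct and is essentially identical to the paper's own proof: the paper likewise deduces the vanishing of $E_{r+1}(C_r(A))$ from the $r$-contractibility $\mathrm{id}_{C_r(A)}\simr{r}0$ together with Proposition $\ref{P:rhomotopy}$. Your write-up merely makes explicit the final step (functoriality forcing $\mathrm{id}_{E_{r+1}(C_r(A))}=0$), which the paper leaves implicit.
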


\begin{proof}Since $id_{C_r(A)}\simr{r} 0$, this follows from Proposition \ref{P:rhomotopy}. 
\end{proof}

\begin{rmk}The suspension $s_r:\bcpx\to\bcpx$, given on objects by
 $A\mapsto s_rA$, is bijective and we will denote by $s_r^{-1}: A\mapsto s_r^{-1}A$ the inverse process. Any morphism $f:A\rightarrow B$ induces a morphism $s_r^{-1} f: s_r^{-1} A\rightarrow s_r^{-1} B$ and we will denote by $\psi_r:s_r^{-1}C_r(A)\rightarrow A$ the morphism $s_r^{-1}\phi_r$. 
\end{rmk}

\begin{prop}\label{coneetfib} The morphism $\psi_r:s_r^{-1}C_r(A)\rightarrow A$  satisfies $ZW_s(\psi_r)$ is surjective for $0\leq s\leq r$.
\end{prop}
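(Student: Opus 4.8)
The plan is to write down, for each witness $s$-cycle of $A$, an explicit preimage under $ZW_s(\psi_r)$, working directly from the component description of the $r$-cone $\cylr(0,id_A)$. First I would dispose of the trivial cases: for $s=0$ the functor $ZW_0$ is the underlying bigraded module, and $\psi_r=s_r^{-1}\phi_r$ is a (shifted) projection onto the $b_r$-coordinate, hence surjective as a map of bigraded modules; since the case $r=0$ only involves $s=0$, I may henceforth assume $r\geq 1$ and $1\leq s\leq r$. Recall that elements of $C_r(A)=\cylr(0,id_A)$ in a given bidegree are written $((a_i)_{1\le i\le r-1},(b_i)_{1\le i\le r},\gamma)$, and that $\psi_r$ extracts the coordinate $b_r$. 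Because $\phi_r$ is a morphism of bicomplexes onto $s_rA$, the differentials on $s_r^{-1}C_r(A)$ are those of $C_r(A)$ twisted by the suspension signs; concretely, a family $(\xi_0,\dots,\xi_{s-1})$ with $\xi_j\in (s_r^{-1}C_r(A))\bgd{p-j}{q-j}$ is a witness $s$-cycle precisely when $d_0\xi_0=0$ and $d_0\xi_j=-d_1\xi_{j-1}$ for $1\le j\le s-1$, where $d_0,d_1$ now denote the differentials of $C_r(A)$. Keeping track of this extra sign is the first thing to get right.

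Given $(\alpha_0,\dots,\alpha_{s-1})\in ZW_s^{p,q}(A)$, so that $d_0\alpha_0=0$ and $d_1\alpha_{m-1}=d_0\alpha_m$, I would set $\alpha_m:=0$ for $m\notin\{0,\dots,s-1\}$ and define the candidate lift $\xi_j$ by its coordinates
\[
a_l^{(j)}=0,\qquad b_l^{(j)}=(-1)^{r-l}\,\alpha_{j+l-r}\ \ (1\le l\le r),\qquad \gamma^{(j)}=0 .
\]
Since $b_r^{(j)}=(-1)^{0}\alpha_j=\alpha_j$, one has $\psi_r(\xi_j)=\alpha_j$, so if $(\xi_j)$ is a witness $s$-cycle it maps onto the given element, yielding surjectivity of $ZW_s(\psi_r)$. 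The content of the proposition is therefore the verification that $d_0\xi_0=0$ and $d_0\xi_j=-d_1\xi_{j-1}$, which I would carry out coordinate by coordinate using the explicit formulas for $d_0,d_1$ on $\cylr(0,id_A)$.

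I expect the verification to split cleanly into three pieces. On the $b$-coordinates the required identity reduces to $d_0b_l^{(j)}=d_1b_l^{(j-1)}$, i.e.\ $d_0\alpha_m=d_1\alpha_{m-1}$ with $m=j+l-r$, which is exactly the witness relation for $(\alpha_m)$; the boundary indices are harmless because $d_0\alpha_0=0$ and because $j+l-r$ never exceeds $s-1$. On the $a$-coordinates the only surviving contributions are $b_l^{(j)}+b_{l+1}^{(j-1)}=\big((-1)^{r-l}+(-1)^{r-l-1}\big)\alpha_{j+l-r}=0$, so the alternating signs make these equations hold with $a_l^{(j)}=0$. The delicate piece is the $\gamma$-coordinate equation $d_0\gamma^{(j)}+d_1\gamma^{(j-1)}+b_1^{(j-1)}=0$: here $b_1^{(j-1)}=(-1)^{r-1}\alpha_{j-r}$, and this term vanishes \emph{exactly} because the hypothesis $s\le r$ forces $j-r\le s-1-r<0$, so that $\gamma^{(j)}=0$ solves it.

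The main obstacle is thus the sign-and-index bookkeeping, and its real point is the role of the hypothesis $s\le r$: it is precisely what makes the obstruction $b_1^{(j-1)}$ disappear and allows the lift to be taken with vanishing $a$- and $\gamma$-coordinates. Were $s>r$, one would instead be forced to solve $d_0\gamma^{(j)}=-d_1\gamma^{(j-1)}-b_1^{(j-1)}$ with a genuinely nonzero $\alpha_0$ on the right-hand side, which has no solution in general; this explains why the range $0\le s\le r$ is the natural one. Once these checks are assembled, $(\xi_0,\dots,\xi_{s-1})$ is a witness $s$-cycle of $s_r^{-1}C_r(A)$ lifting $(\alpha_0,\dots,\alpha_{s-1})$, and surjectivity of $ZW_s(\psi_r)$ for all $0\le s\le r$ follows.
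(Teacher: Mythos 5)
Your proof is correct and follows essentially the same route as the paper: both construct an explicit lift of a witness $s$-cycle $(\alpha_0,\dots,\alpha_{s-1})$ that is concentrated in the $b$-coordinates of the $r$-cone, in the staircase pattern $b_l^{(j)}=\pm\alpha_{j+l-r}$, and verify the witness relations coordinate-wise (the $\gamma$-coordinate obstruction vanishing precisely because $s\leq r$). The only cosmetic difference is that the paper proves surjectivity for $\phi_r\colon C_r(A)\to s_rA$ (so its lift carries no signs, the suspension signs sitting in the cycle conditions of $s_rA$), whereas you work directly with $\psi_r$ and compensate with the signs $(-1)^{r-l}$; the two lifts agree up to the sign identification between witness cycles of $A$ and of $s_rA$.
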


\begin{proof} The case $r=0$ is trivial. Let us assume $r\geq 1$.  We prove it for $\phi_r$, which will imply the statement for $\psi_r$. We consider first the case
$s=r$.
Let $(a_0,a_1,\ldots,a_{r-1})$ be an element of $ZW_r(s_rA)$, where $a_i\in A$, that is $(-1)^{r-1}d_0a_0=0$ and 
$(-1)^rd_1a_k=(-1)^{r-1}d_0a_{k+1}$ for $0\leq k\leq r-2$.
We define the element $X_k=(x_1,\ldots,x_{r-1},y_{1},\ldots,y_r,z)$ of $C_r(A)$ where all the elements are zero except  $y_{i+r-k}=a_i$ for $0\leq i\leq k$.

It is a short computation to check that $(X_0,\ldots,X_{r-1})$ is an element of $ZW_r(C_r(A))$ and that the induced map $ZW_r(\phi_r)$ on $ZW_r$ satisfies
\[\phi_r(X_0,\ldots,X_{r-1})=(a_0, \dots, a_{r-1})\]
Note that since $(X_0, \dots, X_k)\in ZW_k(C_r(A))$ is defined from the data $(a_0,\ldots,a_k)$, the same proof applies to $ZW_k(\phi_r)$,
 for $0\leq k\leq r$. 
\end{proof}

\medskip

\subsection{Model category structures}

\begin{defi}\label{defIrJr} 
For $r\geq 0$, consider the sets of morphisms of bicomplexes
 \[I_r=\left\{\xymatrix{\Zzw_{r+1}(i,j)\ar[r]^-{\iota_{r+1}} & \Bbw_{r+1}(i,j-1)}\right\}_{\substack{i,j\in\ZZ}} 
 \text{ and } 
J_r=\left\{\xymatrix{0\ar[r]& \Zzw_{r}(i,j)}\right\}_{\substack{i,j\in\ZZ}}.\]
\end{defi}

\begin{prop}\label{r-Hovey-234} For each $r\geq 0$, a map $f$ is $J_r$-injective if and only if  $ZW_r(f)$ is surjective.
\end{prop}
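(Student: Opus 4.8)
The plan is to reduce the statement to the representability of the witness cycle functor, mirroring verbatim the argument for filtered complexes in Proposition~\ref{J_r}. First I would unwind the definition of $J_r$-injectivity: a morphism $f\colon A\to B$ is $J_r$-injective precisely when it has the right lifting property against every generating map $0\to\Zzw_r(i,j)$ with $i,j\in\ZZ$. Since the source of each such map is the initial object $0$, any commutative square over $f$ is determined solely by its bottom morphism $\Zzw_r(i,j)\to B$ (the top square always commutes trivially), and the required diagonal filler is simply a morphism $\Zzw_r(i,j)\to A$ whose composite with $f$ is the given bottom map.

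Next I would invoke Lemma~\ref{reval}(\ref{ls}) together with its functorial refinement (the remark that $ZW_r^{i,j}(-)=\bcpx(\Zzw_r(i,j),-)$): morphisms $\Zzw_r(i,j)\to B$ correspond bijectively and naturally to elements of $ZW_r^{i,j}(B)$, and likewise for $A$. Under this identification, post-composition with $f$ corresponds exactly to applying $ZW_r^{i,j}(f)$. Hence the existence of a filler for every bottom map $\Zzw_r(i,j)\to B$ is equivalent to the surjectivity of $ZW_r^{i,j}(f)\colon ZW_r^{i,j}(A)\to ZW_r^{i,j}(B)$.

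Finally, letting $(i,j)$ range over $\ZZ\times\ZZ$, these bidegreewise surjectivity conditions together say precisely that $ZW_r(f)$ is surjective as a map of bigraded modules, which yields both implications simultaneously. There is no genuine obstacle here: the entire content is carried by the representability of $ZW_r$ established in Lemma~\ref{reval}, and the proof is a direct adaptation of Proposition~\ref{J_r}. The only point deserving a moment's care is the naturality of the bijection in Lemma~\ref{reval}(\ref{ls}), which guarantees that composing a lift with $f$ agrees with applying the functor $ZW_r(f)$; this is exactly the functoriality remark following that lemma.
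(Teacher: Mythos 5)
Your proof is correct and takes exactly the same route as the paper, which simply cites Lemma~\ref{reval}(\ref{ls}): the lifting problem against $0\to\Zzw_r(i,j)$ has trivial top data, so fillers exist for all bottom maps precisely when $ZW_r^{i,j}(f)$ is surjective for all $(i,j)$, by the representability $ZW_r^{i,j}(-)\cong\bcpx(\Zzw_r(i,j),-)$. You have merely spelled out the details that the paper leaves implicit.
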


\begin{proof} This follows from $(2)$ of Lemma \ref{reval}.
\end{proof} 

\begin{prop}\label{trivfib_r}
For all $r\geq 0$ we have $\cl{I_r}{inj}= \Ee_r\cap \cl{J_0}{inj}\cap \cl{J_r}{inj}$.
\end{prop}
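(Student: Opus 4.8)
The plan is to transcribe every lifting problem into the witness language supplied by Lemma \ref{reval} and then mimic the filtered argument of Proposition \ref{I_r}. By Proposition \ref{r-Hovey-234}, the target class $\Ee_r\cap\cl{J_0}{inj}\cap\cl{J_r}{inj}$ is exactly the collection of morphisms $f$ that are $E_r$-quasi-isomorphisms and for which both $f$ and $ZW_r(f)$ are surjective, so I must show that $f$ is $I_r$-injective precisely when these three conditions hold. Throughout I use that, by Lemma \ref{reval}, a commutative square from $\iota_{r+1}\colon\Zzw_{r+1}(i,j)\to\Bbw_{r+1}(i,j-1)$ to $f$ is the same datum as a pair $(\alpha,\beta)$ with $\alpha\in ZW_{r+1}^{i,j}(A)$, $\beta\in BW_{r+1}^{i,j-1}(B)$ and $f(\alpha)=w_{r+1}(\beta)$, and that a diagonal lift is exactly an element $\beta_1\in BW_{r+1}^{i,j-1}(A)$ satisfying $w_{r+1}(\beta_1)=\alpha$ and $f(\beta_1)=\beta$.

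For the inclusion $\cl{I_r}{inj}\subseteq\Ee_r\cap\cl{J_0}{inj}\cap\cl{J_r}{inj}$ I would argue as follows. By Lemma \ref{lempush}, each map $0\to\Zzw_{r+1}(i+r,j+r-1)$ in $J_{r+1}$ is a cobase change of $\iota_{r+1}\in I_r$; since right lifting properties are stable under pushout of the left-hand maps, $I_r$-injectivity forces $J_{r+1}$-injectivity, that is, $ZW_{r+1}(f)$ is surjective. Next, for arbitrary $g\colon\Bbw_{r+1}(i,j-1)\to B$, surjectivity of $ZW_{r+1}(f)$ lifts $g\iota_{r+1}$ to some $\gamma\colon\Zzw_{r+1}(i,j)\to A$, and then $I_r$-injectivity applied to the square $(\gamma,g)$ produces $\psi$ with $f\psi=g$; since $\Bbw_{r+1}(i,j-1)=\Zzw_r(i-1,j-1)\oplus\DD_0(i,j-1)\oplus\Zzw_r(i+r,j+r-1)$, reading off the three summands yields that $f$ and $ZW_r(f)$ are surjective. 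Finally, to see $f\in\Ee_r$: surjectivity of $E_{r+1}(f)$ comes from that of $ZW_{r+1}(f)$ via the natural epimorphism $\pi_{r+1}\colon ZW_{r+1}\twoheadrightarrow E_{r+1}$, and injectivity of $E_{r+1}(f)$ comes from the lifting criterion, for if $\pi_{r+1}(f(\alpha))=0$ then $f(\alpha)=w_{r+1}(\beta)\in\Img w_{r+1}(B)$, the resulting square lifts, and the lift $\beta_1$ gives $\alpha=w_{r+1}(\beta_1)\in\Ker\pi_{r+1}(A)$, whence $\pi_{r+1}(\alpha)=0$.

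For the converse, assume $f\in\Ee_r$ with $f$ and $ZW_r(f)$ surjective; since $E_{r+1}(f)$ is then surjective, Remark \ref{mainobs} upgrades this to $ZW_{r+1}(f)$ surjective. Given a lifting problem $(\alpha,\beta)$, injectivity of $E_{r+1}(f)$ (equivalently $\Ker\pi_{r+1}=\Img w_{r+1}$) yields $\beta_1\in BW_{r+1}(A)$ with $w_{r+1}(\beta_1)=\alpha$, after which $\eta:=f(\beta_1)-\beta$ lies in $\Ker w_{r+1}(B)$, and correcting $\beta_1$ by a preimage of $\eta$ finishes. I expect the crux to be precisely this correction: the key is the natural isomorphism $\Ker w_{r+1}\cong ZW_{r+1}$, carrying the degree shift $(p,q-1)\mapsto(p+r,q+r-1)$ and sending $(b_0,\dots,b_{r-1};a;-d_1a,0,\dots,0)$ to $(b_0,\dots,b_{r-1},-a)$, whose verification rests on the relation $d_1b_{r-1}=-d_0a$ that elements of $\Ker w_{r+1}$ are forced to satisfy. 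Under this identification the restriction of $f$ to $\Ker w_{r+1}$ becomes $ZW_{r+1}(f)$, which is surjective, so $\eta$ lifts to some $\delta\in\Ker w_{r+1}(A)$ and $\beta_1-\delta$ is the required lift. This identification is the bicomplex counterpart of the part played by Lemma \ref{L:ffund} in the filtered Proposition \ref{I_r}, and matching the indexing together with the $ZW_r$-cocycle conditions exactly is where the genuine bookkeeping lies.
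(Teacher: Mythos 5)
Your proposal is correct and follows essentially the same route as the paper's proof: pushout stability plus the direct-sum decomposition of $\Bbw_{r+1}$ for the forward inclusion, and for the converse, Remark \ref{mainobs} to upgrade to surjectivity of $ZW_{r+1}(f)$ followed by the correction of the preliminary lift via the identification $\Ker w_{r+1}\cong ZW_{r+1}$, which you state (including the shift of bidegree and the relation $d_1b_{r-1}=-d_0a$) exactly as the paper does.
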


\begin{proof}  
Let $r\geq 0$. Assume first that $f:A\rightarrow B$ is $I_r$-injective. Lemma \ref{lempush} and $(2)$ of Lemma \ref{reval} imply that $f$ is $J_{r+1}$-injective.
Consider the following diagram.
\[
\xymatrix{  &&A\ar[d]^{f} \\
\Zzw_{r+1}(i,j)\ar[r]_-{\iota_{r+1}}\ar@{.>}[urr]^-{\varphi} &\Bbw_{r+1}(i,j-1)\ar@{.>}[ur]^<{\psi}\ar[r]_-{g}& B}
\]

The map $f$  is $J_{r+1}$-injective so $\varphi$ exists such that $f\varphi=g\iota_{r+1}$. The map $f$  is 
$I_r$-injective so $\psi$ exists such that $\psi\iota_{r+1}=\varphi$ and $f\psi=g$. Hence by Lemma \ref{lemretr}, $f\in  \cl{J_0}{inj}\cap \cl{J_r}{inj}$.
Since   $ZW_{r+1}(f)$ is surjective in each bidegree so is $E_{r+1}(f)$  . Let us prove that $E_{r+1}(f)$ is injective. Let $a\in ZW_{r+1}(A)$ such that $[f(a)]=[0]$, that is, there exists $b\in BW_{r+1}(B)$ such that $f(a)=w_{r+1}(b)$. This corresponds to the following solid commutative diagram
\begin{equation}
\label{diagram_lift}
\xymatrix{
 \Zzw_{r+1}(i,j)\ar[d]_{\iota_{r+1}}\ar[r]^-{a}&A\ar[d]^f\\
 \Bbw_{r+1}(i,j-1)\ar[r]_-{b}\ar@{.>}[ur]^{a'}&B
 }
\end{equation}
which admits a lift since $f$ is $I_r$-injective. That is, there exists $a'\in BW_{r+1}(A)$ such that $a=w_{r+1}(a')$ and $f(a')=b$. In particular $[a]=[0]\in E_{r+1}(A)$. Thus $E_{r+1}(f)$ is an isomorphism and $f\in \Ee_r$.

Conversely, assume  $f\in \Ee_r\cap \cl{J_0}{inj}\cap \cl{J_r}{inj}$ and consider the solid diagram (\ref{diagram_lift}) which amounts
to considering elements $a\in ZW_{r+1}(A), b\in BW_{r+1}(B)$ such that $f(a)=w_{r+1}(b)$. 
In consequence $E_{r+1}(f)([a])=[0]$ and the injectivity of $E_{r+1}(f)$ implies $a=w_{r+1}(a')$ for some $a'\in BW_{r+1}(A)$, so that  $b-f(a')\in\Ker\; w_{r+1}(B)$. 

Elements in $\Ker\; w_{r+1}(B)$ are in natural 1-to-1-correspondence with elements of $ZW_{r+1}(B)$ through
$(b_0,\ldots,b_{r-1};a;c_0,0,\ldots,0)\mapsto (b_0,\ldots,b_{r-1},-a).$
The surjectivity of $f, ZW_r(f)$ and $E_{r+1}(f)$ together with Remark \ref{mainobs}  imply  $ZW_{r+1}(f)$ is surjective and so is $f$ restricted to $\Ker\, w_{r+1}$ and there exists $x\in 
\Ker\, w_{r+1}(A)$ such that $f(x)=b-f(a')$. As a consequence one has
$a=w_{r+1}(a'+x), f(a'+x)=b$
and $a'+x$ is the desired lift in the diagram.
 \end{proof}

\begin{prop}\label{Jr-cof} 
For all $r\geq 0$ and all $0\leq k\leq r$ we have $\cl{J_k}{cof}\subseteq \Ee_r$.
\end{prop}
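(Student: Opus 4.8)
The plan is to transcribe the argument of Proposition~\ref{Jrcofquis}, the filtered-complex analogue of this statement, into the language of bicomplexes. In the filtered setting the essential gadget was the shifted cone of the identity $\Mm_r(B)=T_r^{-1}C_r(1_B)$, which came with a projection $\pi_1$ such that $Z_k(\pi_1)$ is surjective for $0\le k\le r$ and with $E_r(\Mm_r(B))$ acyclic. For bicomplexes the corresponding object is $M:=s_r^{-1}C_r(B)$ together with the map $\psi_r\colon M\to B$ of Proposition~\ref{coneetfib}: that proposition gives surjectivity of $ZW_s(\psi_r)$ for all $0\le s\le r$, while Corollary~\ref{Cracyclic} gives $E_{r+1}(C_r(B))=0$, hence $E_{r+1}(M)=0$ since the invertible shift $s_r^{-1}$ merely reindexes the whole spectral sequence.

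First I would fix a $J_k$-cofibration $f\colon A\to B$ with $0\le k\le r$. By Proposition~\ref{r-Hovey-234} this means $f$ has the left lifting property against every morphism $g$ with $ZW_k(g)$ surjective. I would then form the commutative square
\[
\xymatrix{
A\ar[r]^-{\vmat{id}{0}}\ar[d]_f & A\oplus M\ar[d]^{(f,\psi_r)}\\
B\ar[r]^{=} & B
}
\]
and verify that its right-hand map is $J_k$-injective. Since $ZW_k$ is additive, $ZW_k(f,\psi_r)$ is surjective (already its restriction to the summand $ZW_k(M)$ is $ZW_k(\psi_r)$, which is surjective because $k\le r$); Proposition~\ref{r-Hovey-234} then gives the claim. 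As $f$ is a $J_k$-cofibration, a lift $h\colon B\to A\oplus M$ exists, satisfying $hf=\vmat{id}{0}$ and $(f,\psi_r)h=1_B$.

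To conclude I would apply the functor $E_{r+1}$. Using additivity together with $E_{r+1}(M)=0$ and the identification $E_{r+1}(A\oplus M)\cong E_{r+1}(A)$, the maps $\vmat{id}{0}$ and $(f,\psi_r)$ become the identity of $E_{r+1}(A)$ and $E_{r+1}(f)$ respectively. Applying $E_{r+1}$ to the two relations satisfied by the lift then yields $E_{r+1}(h)\circ E_{r+1}(f)=\mathrm{id}$ and $E_{r+1}(f)\circ E_{r+1}(h)=\mathrm{id}$, so $E_{r+1}(f)$ is an isomorphism; by definition of $E_r$-quasi-isomorphism for bicomplexes this says exactly $f\in\Ee_r$.

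Once Propositions~\ref{r-Hovey-234} and~\ref{coneetfib} and Corollary~\ref{Cracyclic} are in hand, the argument is formal, so I do not anticipate a genuine obstacle. The only points needing care are the bookkeeping with the suspension---namely confirming that $E_{r+1}(s_r^{-1}C_r(B))$ really vanishes, which rests on $s_r^{-1}$ being an invertible bidegree shift---and the repeated use of additivity of the functors $ZW_k$ and $E_{r+1}$ to split off the auxiliary summand $M$ cleanly in both the lifting and the concluding steps.
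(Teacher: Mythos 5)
Your proposal is correct and follows essentially the same route as the paper's proof: the same lifting square against $A\oplus s_r^{-1}C_r(B)$ with the map $(f,\psi_r)$, the same appeals to Propositions~\ref{coneetfib} and~\ref{r-Hovey-234} to produce the lift, and the same use of Corollary~\ref{Cracyclic} before applying $E_{r+1}$. The only difference is presentational: you spell out the two-sided-inverse argument and the shift-invariance of $E_{r+1}$, which the paper leaves implicit.
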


\begin{proof} 
The proof is parallel to that of Proposition $\ref{Jrcofquis}$ in the filtered setting.
Let $f:X\rightarrow Y$ be such a map. Consider the following diagram.

\[\xymatrix{ X\ar[r]^-{\vmat{id}{0}}\ar[d]_f & X\oplus s_r^{-1}(C_r(Y))\ar[d]^{\hmat{f}{\psi_r}}\\ Y\ar[r]^{=} & Y}\]

From Propositions~\ref{coneetfib} and~\ref{r-Hovey-234} the righthand vertical map is $J_s$-injective for every $0\leq s\leq r$ so there is a lift in the diagram. From  Proposition~\ref{Cracyclic} one has $E_{r+1}(C_r(Y))=0$.
Applying the functor $E_{r+1}$ to the diagram, we see that $E_{r+1}(f)$ is an isomorphism.  
\end{proof}

 \begin{teo}\label{mainteorB}For every $r\geq 0$, 
the category $\bcpx$ admits a right proper cofibrantly generated model structure, where: 
\begin{enumerate}[(1)]
\item weak equivalences are $E_r$-quasi-isomorphisms,
\item fibrations are morphisms of bicomplexes $f:A\to B$ 
such that $f$ and  $ZW_r(f)$ are bidegree-wise surjective, and
\item $I_r$ and $J_0\cup J_r$ are the sets of generating cofibrations and generating trivial cofibrations respectively.
\end{enumerate}
\end{teo}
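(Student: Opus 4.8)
The plan is to apply Kan's recognition theorem (Theorem \ref{HoveyTheorem}) with weak equivalences $\Ww=\Ee_r$, generating cofibrations $I=I_r$, and generating trivial cofibrations $J=J_0\cup J_r$. Conditions (1)--(3) of that theorem are already available for $\bcpx$: the class $\Ee_r$ satisfies two-out-of-three and is closed under retracts, as recorded after its definition, while the compactness of the domains of $I_r$ and $J_0\cup J_r$ relative to the corresponding cell complexes holds by the discussion following Theorem \ref{HoveyTheorem}. So I would only need to verify Kan's conditions (4) and (5).

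First I would pin down the fibrations. By Proposition \ref{r-Hovey-234}, a map $f$ is $J_k$-injective precisely when $ZW_k(f)$ is bidegree-wise surjective. Since a map is $(J_0\cup J_r)$-injective iff it is simultaneously $J_0$- and $J_r$-injective, and since $ZW_0=\mathrm{id}$ gives $ZW_0(f)=f$, this yields
\[\cl{(J_0\cup J_r)}{inj}=\cl{J_0}{inj}\cap\cl{J_r}{inj}=\{\, f \mid f \text{ and } ZW_r(f) \text{ are bidegree-wise surjective}\,\},\]
which is exactly the stated class of fibrations. Kan's condition (5) then reads $\cl{I_r}{inj}=\Ee_r\cap\cl{J_0}{inj}\cap\cl{J_r}{inj}$, which is precisely the content of Proposition \ref{trivfib_r}; so (5) holds with no further work.

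The real content is Kan's condition (4), that $\cl{(J_0\cup J_r)}{cof}\subseteq\Ee_r$, and this is where I expect the main obstacle. The subtlety is that enlarging $J$ \emph{enlarges} the cofibration class, so $\cl{(J_0\cup J_r)}{cof}\supseteq\cl{J_r}{cof}$ and Proposition \ref{Jr-cof} cannot be quoted directly; instead I would rerun its argument. Given $f:X\to Y$ in $\cl{(J_0\cup J_r)}{cof}$, I would form the square
\[\xymatrix{ X\ar[r]^-{\vmat{id}{0}}\ar[d]_f & X\oplus s_r^{-1}(C_r(Y))\ar[d]^{\hmat{f}{\psi_r}}\\ Y\ar[r]^{=} & Y}\]
By Proposition \ref{coneetfib}, $ZW_s(\psi_r)$ is surjective for all $0\le s\le r$, hence so is $ZW_s(\hmat{f}{\psi_r})$; by Proposition \ref{r-Hovey-234} this places the right-hand map in $\cl{J_0}{inj}\cap\cl{J_r}{inj}=\cl{(J_0\cup J_r)}{inj}$. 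Therefore $f$ admits a lift in the square, exhibiting $f$ as a retract of $\hmat{f}{\psi_r}$. Since $E_{r+1}(C_r(Y))=0$ by Corollary \ref{Cracyclic}, applying $E_{r+1}$ collapses the $s_r^{-1}(C_r(Y))$ summand and turns the lift into a two-sided inverse of $E_{r+1}(f)$, forcing $E_{r+1}(f)$ to be an isomorphism, i.e. $f\in\Ee_r$.

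With (1)--(5) verified, Theorem \ref{HoveyTheorem} supplies the model structure, whose fibrations are the $(J_0\cup J_r)$-injective maps identified above. Finally, every object is fibrant: for any bicomplex $A$ the maps $A\to 0$ and $ZW_r(A\to 0)$ are trivially bidegree-wise surjective, so $A\to 0$ is a fibration; right properness then follows from \cite[13.1.3]{Hir}.
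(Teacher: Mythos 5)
Your proposal is correct and follows essentially the same route as the paper's proof: Kan's recognition theorem with fibrations identified via Proposition \ref{r-Hovey-234}, condition (5) supplied by Proposition \ref{trivfib_r}, condition (4) by the cone argument, and right properness from all objects being fibrant via \cite[13.1.3]{Hir}. The only difference is that you are more careful on condition (4): the paper cites Proposition \ref{Jr-cof} as giving $\cl{(J_0\cup J_r)}{cof}\subseteq \Ee_r$ directly, whereas you correctly note that its statement covers only the individual classes $\cl{J_k}{cof}$ and rerun its proof for the union --- which goes through verbatim precisely because the comparison map $\hmat{f}{\psi_r}$ is $J_s$-injective for all $0\leq s\leq r$ simultaneously, exactly as in the paper's own argument.
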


\begin{proof} Set $K_r=J_0\cup J_r$. From Theorem \ref{HoveyTheorem} and Proposition \ref{r-Hovey-234} we have to prove that 
 $\cl{K_r}{cof}\subseteq \Ee_r$ and $\cl{I_r}{inj}= \Ee_r\cap \cl{K_r}{inj}$. The first assertion is a direct consequence of Proposition \ref{Jr-cof} and the second one of Proposition  \ref{trivfib_r}. By \cite{Hir} 13.1.3 right properness follows directly from the fact that all objects are fibrant.
\end{proof}

As in the filtered complex case, in certain situations it may be easier to characterize fibrations if they are described in terms of surjectivity of $E_r$ instead of $ZW_r$.

\begin{defi}
Let $I'_r$ and $J_r'$ be the sets of morphisms of $\fcpx$ given by 
\[I_r':=\cup_{k=1}^{r-1} J_k\cup I_r\text{ and }
J_r':=\cup_{k=0}^r J_k.\]
\end{defi}

The proof of the following result is analogous to that of Theorem~\ref{modelrfilcomplexC} for filtered complexes. 

 \begin{teo}\label{mainteorC}For every $r\geq 0$, 
the category $\bcpx$ admits a right proper cofibrantly generated model structure, where: 
\begin{enumerate}[(1)]
\item weak equivalences are $E_r$-quasi-isomorphisms,
\item fibrations are morphisms of bicomplexes $f:A\to B$ 
such that $E_i(f)$ is  bidegree-wise surjective for every $0\leq i\leq r$, and
\item $I'_r$ and $J'_r$ are the sets of generating cofibrations and generating trivial cofibrations respectively. \qedhere
\end{enumerate}
\end{teo}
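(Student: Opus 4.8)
The plan is to apply the recognition theorem (Theorem \ref{HoveyTheorem}) with $I'_r$ and $J'_r$ as the generating cofibrations and generating trivial cofibrations and with $\Ee_r$ as weak equivalences, following the proof of Theorem \ref{modelrfilcomplexC} almost verbatim. Since $\bcpx$ is abelian with all small limits and colimits, and the domains of the maps in $I'_r$ and $J'_r$ are either $0$ or the finite bicomplexes $\Zzw_{r+1}(i,j)$, conditions $(1)$--$(3)$ of Theorem \ref{HoveyTheorem} hold (two-out-of-three and closure under retracts for $\Ee_r$ were recorded earlier, and compactness of the domains is immediate, as noted in the preliminaries). It then remains to verify conditions $(4)$ and $(5)$, namely $\cl{J'_r}{cof}\subseteq\Ee_r$ and $\cl{I'_r}{inj}=\Ee_r\cap\cl{J'_r}{inj}$.

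First I would identify the $J'_r$-injective maps. Writing $J'_r=\cup_{k=0}^r J_k$, Proposition \ref{r-Hovey-234} says that $f$ is $J_k$-injective precisely when $ZW_k(f)$ is surjective, so $f$ is $J'_r$-injective if and only if $ZW_k(f)$ is surjective for all $0\le k\le r$. By Lemma \ref{epi_r} this is equivalent to $E_k(f)$ being bidegree-wise surjective for all $0\le k\le r$, which gives the stated description of fibrations. Condition $(4)$ is then immediate: applying Proposition \ref{Jr-cof} for each $0\le k\le r$ yields $\cl{J_k}{cof}\subseteq\Ee_r$, and hence $\cl{J'_r}{cof}\subseteq\Ee_r$.

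For condition $(5)$ I would argue by a comparison of the relevant injective classes. Since $I'_r=\cup_{k=1}^{r-1}J_k\cup I_r$, we have $\cl{I'_r}{inj}=\cl{I_r}{inj}\cap\bigcap_{k=1}^{r-1}\cl{J_k}{inj}$. Proposition \ref{trivfib_r} identifies $\cl{I_r}{inj}=\Ee_r\cap\cl{J_0}{inj}\cap\cl{J_r}{inj}$, so substituting gives
\[\cl{I'_r}{inj}=\Ee_r\cap\bigcap_{k=0}^r\cl{J_k}{inj}=\Ee_r\cap\cl{J'_r}{inj},\]
as required. Right properness then follows from \cite{Hir} 13.1.3, since all objects are fibrant.

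The one genuine subtlety, and the point at which this differs from the filtered case, is the indexing of $I'_r$: here the union runs from $k=1$ to $r-1$ rather than from $0$ to $r-1$, precisely because Proposition \ref{trivfib_r} already folds $J_0$-injectivity (surjectivity of $f$ itself) into $\cl{I_r}{inj}$, whereas in the filtered setting Proposition \ref{I_r} supplies only $J_r$-injectivity. Getting this bookkeeping right is exactly what makes the set comparison in condition $(5)$ close up; the remaining steps are formal consequences of the already-established Propositions \ref{trivfib_r} and \ref{Jr-cof} and Lemma \ref{epi_r}.
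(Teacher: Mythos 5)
Your proof is correct and takes essentially the same approach as the paper: the paper's proof of this theorem is literally ``analogous to that of Theorem~\ref{modelrfilcomplexC}'', i.e.\ apply Theorem~\ref{HoveyTheorem} by identifying $\cl{J'_r}{inj}$ via Proposition~\ref{r-Hovey-234} and Lemma~\ref{epi_r}, obtaining $\cl{J'_r}{cof}\subseteq\Ee_r$ from Proposition~\ref{Jr-cof}, and deducing $\cl{I'_r}{inj}=\Ee_r\cap\cl{J'_r}{inj}$ from Proposition~\ref{trivfib_r} by exactly the set comparison you write down. Your remark on the indexing of $I'_r$ (the union starting at $k=1$ because Proposition~\ref{trivfib_r} already absorbs $\cl{J_0}{inj}$, unlike Proposition~\ref{I_r} in the filtered setting) is precisely the bookkeeping the paper leaves implicit.
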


 \begin{rmk}\label{nodec}
 In the case of filtered complexes, we showed via the shift and d\'{e}calage adjunction, that all the model categories are equivalent when we vary $r$.
 However, bicomplexes are much more rigid structures than filtered complexes, and so we do not have such an equivalence, nor even an adjunction.
 Indeed, both shift and d\'{e}calage change the bidegrees of the differentials, so their images land in 
 categories of bicomplexes whose differentials have different bidegrees, hence outside of the original category.
 \end{rmk}

\bibliographystyle{alpha}

\bibliography{bibliografia}

\end{document}